\documentclass[a4paper,11pt]{article}
\usepackage[utf8]{inputenc}
\usepackage{graphicx, subcaption}
\usepackage{amsmath, amssymb, amsthm}
\usepackage{algorithm, algorithmic}
\usepackage{hyperref}
\usepackage{xcolor}
\usepackage{stmaryrd}
\usepackage{bbm}
\usepackage{enumitem}
\usepackage{pifont}

\hypersetup{
  colorlinks=true,
  linkcolor=teal,
  filecolor=blue,
  citecolor=cyan,
  urlcolor=violet,
}

\theoremstyle{plain}
\newtheorem{thm}{Theorem}[section]

\newtheorem{cor}[thm]{Corollary}

\newtheorem{lemma}[thm]{Lemma}
\newtheorem{pro}[thm]{Proposition}
\newtheorem{defi}[thm]{Definition}

\newtheorem{rem}[thm]{Remark}

\newtheorem{exas}[thm]{Examples}
\newtheorem{notas}[thm]{Notations}

\theoremstyle{definition}

\newcommand{\norm}[1]{\left\lVert#1\right\rVert}
\newcommand{\abs}[1]{\left\lvert#1\right\rvert}
\newcommand{\R}{\mathbb{R}}

\newcommand{\C}{\mathbb{C}}
\newcommand{\E}{\mathbb{E}}

\newcommand{\Ker}{{\rm Ker}}
\newcommand{\Img}{{\rm Im}}
\newcommand{\Sp}{{\rm Sp}}
\newcommand{\Vect}{{\rm Vect}}
\newcommand{\lip}{\rm Lip_1}

\renewcommand{\S}{\mathcal{S}}

\DeclareMathOperator{\rank}{rank}

\title{Optimistic    Gradient Descent Ascent  in Zero-Sum and General-Sum  Bilinear  Games}
\author{\'{E}tienne de Montbrun, J\'er\^{o}me Renault}
\date{\today}

\begin{document}
\maketitle
\begin{abstract}
 We study the convergence of  Optimistic Gradient Descent Ascent in unconstrained  bilinear games. In a first part,  we consider the   zero-sum case and extend previous results by Daskalakis {\it{et al.}}  \cite{daskalakis2017training},  Liang and Stokes \cite{liang2019interaction}  and others:  we  prove, for any payoff matrix,  the exponential convergence of OGDA to a saddle point and also provide a new, optimal,  geometric  ratio for the convergence. We also characterize the step sizes inducing convergence, and are able to deduce the optimal step size for the speed of convergence.   In a second part, we introduce OGDA for general-sum bilinear games: we   show that in an interesting class of games, either OGDA converges exponentially fast to a Nash equilibrium, or the payoffs for both players converge exponentially fast to $+\infty$ (which 
 might  be interpreted as    endogenous emergence of coordination, or cooperation, among players).  We also give sufficient conditions for convergence of OGDA to a Nash equilibrium. These conditions are used  to increase the speed of convergence of  a min-max problem involving a matrix $A$, by introducing a general-sum game using the   Moore-Penrose inverse matrix of $A$. This shows   for the first time, at our knowledge,   that general-sum games can   be used to optimally improve algorithms designed for  min-max problems.  We finally illustrate our results on   simple  examples of Generative Adversarial Networks. 
\end{abstract}

\section{Introduction}
Min-max optimization is receiving a lot of attention, due in particular to the popularity of generative adversarial networks (GANs, introduced in \cite{goodfellow2014generative}) and adversarial training (see for instance \cite{Madryadversarial}). In the original version of GANs, two neural networks are in competition: a generator that aims at  generating data as close as possible to some real data, and a discriminator whose goal is to differentiate between generated and true data. Because the discriminator wants to maximize a loss that the generator wants to minimize, GANs can be seen as a min-max, or zero-sum game,  problem.

Formally, consider a ``payoff'' function  $g:\mathcal{X}\times \mathcal{Y}\rightarrow \R$ and assume  we want to find  $y^*$ achieving $\min_{y \in \mathcal{Y}} \sup_{x\in \mathcal{X}} g(x,y)$. Assuming  the zero-sum game associated to $g$ has a Nash equilibrium (i.e., a saddle-point), the game-theoretic approach is to find an equilibrium, i.e.  $(x^*,y^*)$ in $\mathcal{X}\times \mathcal{Y}$ such that:
\begin{equation}\label{eq_NE} \forall x\in \mathcal{X}, \forall y \in \mathcal{Y}, g(x,y^*)\leq g(x^*,y^*)\leq g(x^*,y).\end{equation}

A natural algorithm to search for such  an equilibrium is the Gradient Descent-Ascent  algorithm (GDA): given a small parameter $\eta>0$ and a starting point $(x_0,y_0)$, define inductively the sequence:
\[\forall t\geq 0,\left\{ \begin{array}{ccccc}  x_{t+1}& =&x_t& +& \eta \; \frac{\partial g}{\partial x}(x_t,y_t), \\ y_{t+1} &=& y_t& -& \eta\;  \frac{\partial g}{\partial y}(x_t,y_t). \end{array}\right. \]
Most GANs currently use (stochastic) Gradient Descent Ascent   under the hood.
In some cases, GDA was acknowledged to converge {\it in average} to the solution of the min-max problem in several zero-sum games \cite{nedic2009subgradient}. 
However, it is  well known that even for bilinear  functions $g(x,y)=x^TAy$ with $\mathcal{X}=\mathcal{Y}=\R^n$, GDA may exhibit a cyclic  behavior and the last iterate may not converge (\cite{daskalakis2018limit} for example).  A nice variant of the GDA is the Optimistic Gradient  Descent-Ascent algorithm (OGDA): 
\begin{equation}\label{eqOGDA}
  \forall t\geq 0,\left\{ \begin{array}{ccccccc}  x_{t+1}& =&x_t& +& 2\eta \; \frac{\partial g}{\partial x}(x_t,y_t)&-&\eta \;  \frac{\partial g}{\partial x}(x_{t-1},y_{t-1}), \\ y_{t+1} &=& y_t& -& 2\eta\;  \frac{\partial g}{\partial y}(x_{t},y_{t})&+& \eta \; \frac{\partial g}{\partial y}(x_{t-1},y_{t-1}).  \end{array} \right. \end{equation}
This optimistic  variant has been introduced by Popov in \cite{popov1980modification} and rediscovered in the GAN literature (\cite{daskalakis2017training} for example). A challenge is then to determine when OGDA converges to an equilibrium of the game.

\vspace{0,5cm}

In this paper, we focus on the bilinear unconstrained case, where $\mathcal{X}=\R^n$, $\mathcal{Y}=\R^p$ and payoffs are bilinear.\\

We first consider in section \ref{section_bilinear} the zero-sum case where $g(x,y)=x^TAy$ for some matrix $A\in \R^{n\times  p}$. The difficulty here is not to  find a saddle-point of $g$ (this is fairly easy: take $x^*=y^*=0$) but to prove or disprove the convergence of OGDA to a saddle-point in this simple setup. In this context, Daskalakis {\it{et al.}}  \cite{daskalakis2017training}  proved the  last iterate  convergence of OGDA, in the sense that for each $\varepsilon>0$ one can find $\eta>0$ and $T$ large enough such that $(x_T,y_T)$ (as in (\ref{eqOGDA})) is $\varepsilon$-close to a Nash equilibrium. Liang and Stokes \cite{liang2019interaction}, as well as Mokthari {\it{et al.}}  \cite{mokhtari2020unified} proved that in the case of a square invertible matrix $A$, OGDA converges to $(0,0)$ at exponential speed.    Peng {\it{et al.}} \cite{peng2020training} extended the analysis to non square matrices.

We extend     in  Theorem \ref{thm_conv_OGDA_xAy} of  section \ref{subsec_xAy}  the   previous results of   the litterature. For any matrix $A$ and any initial conditions, OGDA does converge, and the limit  is  a  Nash equilibrium that we characterize (and is independent of $\eta$).  Moreover, the speed of convergence is exponential  and we exhibit the exact geometric ratio for the convergence,  as a function of the step size $\eta$ and the matrix $A$, improving the previous ratios obtained for square  invertible matrices. We also    characterize the range of step sizes where convergence holds,   extending the previous values considered for $\eta$.   We believe these results of last iterate convergence matter because they show a strong stability of the OGDA algorithm for any matrix: not only OGDA can be used to find ($\varepsilon$)-equilibria, but they will not exhibit  cyclic  behaviors within  the possibly large set of Nash equilibria.

In  section \ref{subsec_comments},   Theorem \ref{thm_conv_OGDA_xAy} is compared with the previous existing results, and  in section \ref{subsec_general_zs} Theorem \ref{thm_conv_OGDA_zs_general}    extends  the results of Theorem \ref{thm_conv_OGDA_xAy} to the bilinear case where $g(x,y)= x^TAy+b^Tx+c^Ty+d$. 

In section \ref{subsec_optimal_ratio},  we  compute as a function of $A$,   the step size $\eta$ optimizing  the geometric   ratio of convergence. Whereas the largest step size considered so far  in the literature was $\frac{1}{2 \sqrt{\mu_{\max}}}$(with $\mu_{\max}$ the largest eigenvalue of $AA^T$), it turns out that the optimal step size   is always  at least  $\frac{1}{2 \sqrt{\mu_{\max}}}$, and  greater than $\frac{1}{2 \sqrt{\mu_{\max}}}$ as soon as $AA^T$ has several nonzero eigenvalues. The optimal ratio varies  between $\frac{\sqrt{2}}{2}$ to $1$, and is characterized as a function of $\frac{\mu_{\min}}{\mu_{\max}}$, where $\mu_{\min}$ is the smallest  positive  eigenvalue of $AA^T$.

The proofs, in Appendix,  are based on a precise  spectral analysis of the linear system with variable $(x_{t+1},y_{t+1},x_t,y_t)$, one  difficulty being to control the angles between complex eigenspaces in order to obtain proper  rates of convergence.


\vspace{0,5cm}

We then consider OGDA for general-sum bilinear games. We stick here to the case where $\mathcal{X}$ and $\mathcal{Y}$ are Euclidean spaces, and in section \ref{sec_nzs}  we introduce the natural extension of  OGDA for  bilinear  general-sum games. It is easy to see that OGDA need not   always converge to a Nash equilibrium  here, and  we introduce in Theorem \ref{thm_conv_DOGDA} a very simple variant of OGDA, called DOGDA (Double Optimistic Gradient Descent Ascent), which always  converges to a Nash equilibrium in a bilinear general-sum game $(x^TAy,x^TBy)$\footnote{The ``DOGDA trick'' is simply based on a strong specifity of the games considered, namely that $(x^*,y^*)$ is a Nash equilibrium of the game if and only if ($x^*$ is an optimal strategy for player 1 in the zero-sum game with matrix $-B$ and $y^*$ is an optimal strategy for player 2 in the zero-sum game with matrix $A$).}. We then come back to  OGDA and  give in Theorems \ref{prop_conv_OGDA_xAy_xBy} and \ref{thm_conv_OGDA_general_xAy_xBy}   sufficient conditions for convergence  to a Nash equilibrium, when  all eigenvalues of $B^TA$ are real negative.

In section \ref{improvingwgsg}, we come back to the problem of finding 
 $y^*$ achieving $\min_{y \in \R^p} \sup_{x\in \R^n} g(x,y)$, where $g(x,y)=x^TAy+b^Tx+c^Ty+d$ is bilinear. One can use the zero-sum results of section \ref{section_bilinear} and use OGDA for the zero-sum game with payoff functions $g$ for player 1 and $-g$ for player 2. We obtain the convergence of $(y_t)_t$ to an optimal $y^*$ at exponential speed with ratio in $[\frac{\sqrt{2}}{2},1)$depending on $\frac{\mu_{\min}}{\mu_{\max}}$. But one can also consider OGDA for the general sum-game with payoff function $g_1=g$ for player 1 and $g_2(x,y)=- x(A^\dagger)^Ty$, where $A^\dagger$ is the generalized (Moore-Penrose) inverse of $A$. It turns  out that Theorem \ref{thm_conv_OGDA_general_xAy_xBy} applies here, and for this OGDA as soon as 
$\eta\leq 1/2$ the sequence $(y_t)$ converges to the same optimal $y^*$, with a  geometric   rate  of   $\frac{\sqrt{2}}{2}$ when $\eta\simeq 1/2$. This shows  the interest of introducing OGDA for general-sum games:  the speed of convergence to a solution of a Min-Max problem can be improved if one considers a well suited general-sum game, instead of the natural  associated zero-sum game.  
 
The last part of section \ref{sec_nzs} considers OGDA for a particular class of payoff matrices, including in particular the case of common payoff games where $A=B$ (implying that there is a potential for coordination or cooperation among players), or more generally the cases where $B=\alpha A$ for some $\alpha\neq0$.  It is proved that for this class of games, either OGDA converges  to a Nash equilibrum,  or under OGDA the payoff for both players tends to $+\infty$ exponentially fast as $t\rightarrow +\infty$. We believe this result  is a meaningful extension of the convergence of OGDA, since both players having $+\infty$ payoffs can be seen as a  generalized Nash equilibrium  of the game and a very desirable outcome of the interaction. \\

In section \ref{sec_illustration} we illustrate our results on stylised versions of Generative Adversarial Networks. We first  recall the theoretical GANs and WGANs setups, and then illustrate our results  on linear instances of  Wasserstein GANs problem  in the spirit of    \cite{daskalakis2017training}. We show in particular on an example  how introducing a general-sum game using $B=-(A^\dagger)^T$ improves the speed of convergence of the OGDA. We also discuss limitations of the bilinear model with respect to real GANs applications. 

We finally  conclude and present a few possible research  directions   in section \ref{secconclusion}. The proofs are in the Appendix.

\vspace{1cm}

\noindent \textbf{Other Related Works}:

Besides \cite{daskalakis2017training},  \cite{liang2019interaction},  \cite{mokhtari2020unified} and 
\cite{peng2020training}, Zhang and Yu \cite{zhang2019convergence} also consider convergence of gradient methods in bilinear zero-sum games.  Gidel \textit{et al.}
  \cite{gidel2018variational} study GANs through variational inequalities, with a convergence ratio not better than  in  \cite{liang2019interaction}. 

In \cite{rakhlin2013optimization}, Rakhlin and Sridharan introduced an algorithm called Optimistic Mirror Descent (OMD), based on the Mirror Descent, and  showed that it is a no-regret algorithm. Optimistic Gradient Descent Ascent (OGDA) is a particular case of OMD, for which the regularizer used is the 2-norm. Another algorithm close to  OGDA is  the Extra-Gradient method (EG) introduced by Korpelevich in \cite{korpelevich1976extragradient}. Both OGDA and EG  can be seen as approximations of the Proximal Point (PP) method .
In \cite{mokhtari2020unified}, Mokhtari \textit{et al.} showed again  the exponential convergence of PP,  OGDA, and of EG, for  square  invertible bilinear games,  for a single  particular value of $\eta$, and with an exponential bound of convergence greater than ours.
Another article studying Extra-Gradient, including  a stochastic setting,  is the one of Hsieh \textit{et al.} \cite{hsieh2019convergence}.\\
Beyond bilinear games,  some papers consider OGDA for concave/convex games. In particular,  Daskalakis \textit{et al.} \cite{daskalakis2018limit} study the stability of fixed points of the dynamics. Non-concave/non-convex setting exhibit problematic  properties: \cite{hsieh2020limits}  shows there may exist  attractors of OGDA containing no stationary points, and  \cite{daskalakis2021complexity} shows that finding an approximate local minmax equilibrium is PPAD-hard. However, in \cite{diakonikolas2021efficient}, Diakonikolas {\it et al.} showed positive results in a new class of non concave/non convex minmax problems.

  Many studies where also done in a compact settings, that is when $\mathcal{X}$ and $\mathcal{Y}$  are compact sets. In \cite{daskalakis2018last}, Daskalakis \textit{et al.}  proved the convergence of an Optimistic version of Multiplicative Weight Updates on the simplex, for an appropriate value of the learning rate.
Later, Wei \textit{et al.} improved their results in \cite{wei2020linear}, by showing the convergence for any learning rate smaller than some constant. Finally, in a  recent paper \cite{anagnostides2022optimistic}, Anagnostides \textit{et al.} consider  the convergence of a projected version of OGDA for general-sum games, where after each iteration, a projection on the simplex is applied. \\


\begin{notas} \rm We use  the following standard notations:\\
  $\R_+$ denotes the set of non-negative reals and $\R_-$ the set of non-positive reals.
  $A^T$ denotes the transpose of a matrix $A$ in $\C^{n\times p}$. If  $A$ is square, $\Sp(A)$ is the set of complex eigenvalues of $A$, and $\rho(A)=\max\{|\lambda|, \lambda\in \Sp(A)\}$ is the spectral radius of $A$. 
  Given  column vectors $x$, $x'$  in $\R^n$ or more generally in $\C^n$, we use the Hermitian scalar product $\langle x,x'\rangle={x}^T \, \overline{x'}$ where $\overline{x'}$ is the complex conjugate of $x'$, and the   norm $\|x\|=\sqrt{\langle x,x\rangle}$. Given  two matrices $A$ and $B$ of the same size, we denote by $\S(A,B)$ the set $\Sp(B^TA) \cup \Sp(AB^T)$, and we simply write  $\S(A)$\footnote{Recall that $\S(A)\subset \R_+$, and $\S(A)=\{0\}$ if and only if $A=0$. If  $A$ and $B$ are matrices of the same size, then  $AB^T$ and $B^TA$ have  the same  nonzero eigenvalues.} for $\S(A,A)$.
$I_n$ denotes the identity matrix in $\R^{n\times n}$.


\end{notas}


\section{Zero-Sum Bilinear Games}
\label{section_bilinear}

\subsection{Zero-Sum Games $x^TAy$}
\label{subsec_xAy}

 Let $A$ be a payoff matrix in $\R^{n \times p}$. We consider the zero-sum game where simultaneously player 1 chooses $x$ in $\R^n$, player 2 chooses $y$ in $\R^p$, and finally player 2 pays $x^TAy$ to player 1. Here $x$ and $y$ are seen as column vectors. This game has a value which is 0, and both players have optimal strategies:
 \[\max_{x \in \R^n} \inf_{y \in \R^p} x^T Ay=  \min_{y \in \R^p} \sup_{x \in \R^n}x^T Ay=0.\]
 The set of optimal strategies of player 1 is $\{x\in \R^n, x^TA=0\}=\Ker(A^T)$, and the set of optimal strategies of player 2 is $\{y\in \R^p, Ay=0\}=\Ker(A)$. So $(x,y)$ is a Nash equilibrium (or saddle-point) of the game  if and only if $x\in \Ker(A^T)$ and $y\in \Ker(A)$.

\begin{defi}The Gradient Descent Ascent is defined by:
\[\forall t\geq 0,\left\{ \begin{array}{ccccc}  x_{t+1}& =&x_t& +& \eta A y_t, \\ y_{t+1} &=& y_t& -& \eta A^T x_t, \end{array}\right. \]
where $\eta>0$ is a fixed parameter (the gradient step), and $(x_0,y_0)\in \R^n\times \R^p$ is the initialization.\end{defi}

Notice that the fixed points of the GDA are exactly the Nash equilibria of the game. 
It is well known  that the GDA does not converge  in general,  as one can see in the simple ``Matching Pennies''  case  where $n=p=1$ and  $A=(1)$,  so that  $x^TAy$ is simply $xy$.  In this case, we obtain:
\[\left\{ \begin{array} {ccc} 
  x_{t+1}&= &x_t+\eta y_t  \\
  y_{t+1} &= & y_t-\eta x_t \end{array}\right.,\]
  which implies $x_{t+1}^2+y_{t+1}^2=(x_t^2+y_t^2)(1+\eta^2)$, and $\|(x_t,y_t)\| \xrightarrow[t\to \infty]{}+\infty$ as soon as $(x_0,y_0)\neq(0,0)$. \\

To overcome this problem, 
\cite{rakhlin2013optimization} (see also  \cite{daskalakis2018limit})
  introduced ``optimism'' to GDA. 

\begin{defi}  The Optimistic Gradient Descent Ascent  (OGDA) is defined by:
\[\forall t\geq 0, \left\{ \begin{array}{ccccccc}  x_{t+1} &= & x_t &+ &2\eta A y_t& -& \eta A y_{t-1},\\ y_{t+1} &= & y_t &- &2\eta A^T x_t &+ &\eta A^T x_{t-1}, \end{array} \right.\]
where $\eta>0$ is a fixed parameter, and $(x_0,y_0,x_{-1},y_{-1})\in \R^n\times \R^p\times \R^n\times \R^p$ is the initialization.\end{defi}

The term of time $t-1$ is here to thwart the divergence of GDA, by bringing back toward the center the iterate of time $t+1$.
We can see empirically the importance of this ``optimistic'' addition in figure \ref{GDAvsOGDA} for the matching pennies problem where $A=(1)$.\\

\begin{figure}[h]
  \begin{subfigure}{0.5\textwidth}
  \includegraphics[width = \textwidth]{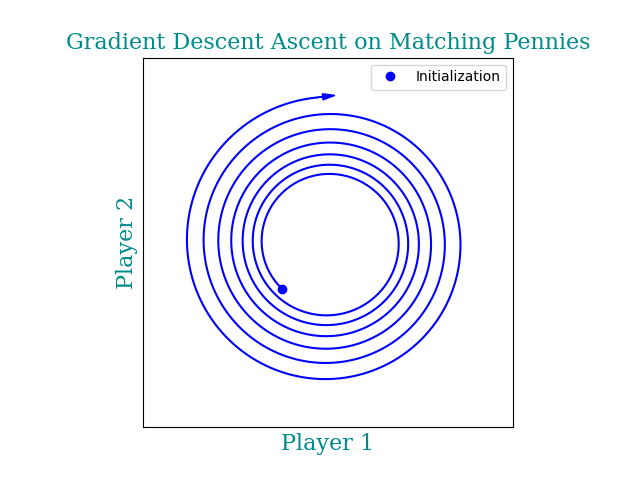}
  \end{subfigure}
  \begin{subfigure}{0.5\textwidth}
  \includegraphics[width = \textwidth]{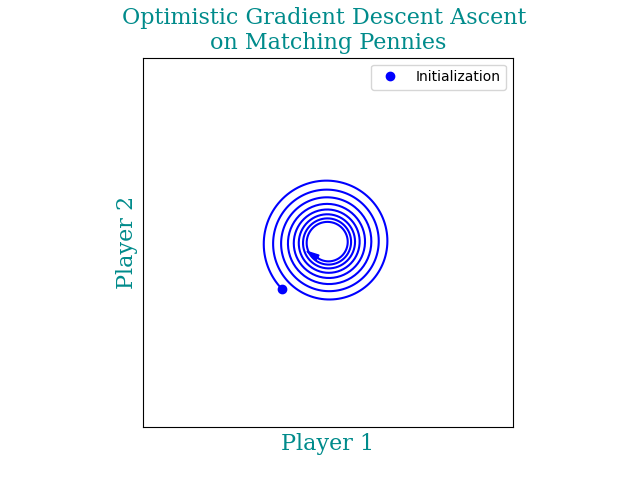}
  \end{subfigure}\\
  \caption{Comparison of GDA and OGDA for Matching Pennies}
  \label{GDAvsOGDA}
\end{figure}

We will see our optimistic GDA as a dynamical system in $\R^n\times \R^p\times \R^n\times \R^p$.  
 
\begin{defi}  Define the matrix 
\[ \Lambda = \begin{pmatrix} I_n & 2\eta A & 0 & -\eta A\\-2\eta A^T & I_p & \eta A^T & 0\\ I_n & 0 & 0 & 0\\ 0 & I_p & 0 & 0  \end{pmatrix} \in \R^{(n+p+n+p) \times (n+p+n+p)},\] and  for $t\geq 0$, let  $Z_t$ be the column vector ${Z_t =\left(\begin{array}{c} x_t\\ y_t\\ x_{t-1}\\y_{t-1}\end{array}\right) \in \R^n\times \R^p\times \R^n\times \R^p}$.
\end{defi}
The  matrix $\Lambda$ has already been introduced  in \cite{peng2020training}. We now have: 
  \begin{equation}Z_0\in \R^n\times \R^p\times \R^n\times \R^p  \;\;{\rm and}\;\; \forall t\geq 0,  Z_{t+1} = \Lambda  Z_t. \end{equation}
In the sequel we will denote by $I$ the identity matrix of size $2(n+p)$.

\subsection{Convergence of OGDA  for a zero-sum game $x^T Ay$}
\label{subsec_conv_xAy}

The  convergence of OGDA when $A=(1)$ is well known and can be shown as follows. We have
 \[ \Lambda = \begin{pmatrix} 1 & 2\eta  & 0 & -\eta  \\-2\eta & 1 & \eta & 0\\ 1 & 0 & 0 & 0\\ 0 & 1 & 0 & 0  \end{pmatrix},\]
with characteristic polynomial $\det(\Lambda-\lambda I)= \lambda^2(1-\lambda)^2+ \eta^2(1-2\lambda)^2.$ Assuming $\eta<1/\sqrt{3}$, easy computations show that  the  spectral radius of  $\Lambda$  is strictly smaller than 1.  Then $\Lambda^t\xrightarrow[t\to \infty]{} 0$ by Gelfand's theorem, so $(x_t,y_t)\xrightarrow[t\to \infty]{}(0,0)$.\\

We now  generalize this linear algebra approach to any matrix $A$, focusing not only on the convergence property but also on the characterization of the limit and on the speed of convergence.


\begin{thm}
  \label{thm_conv_OGDA_xAy}
Let $A\in \R^{n\times p}$ with $A\neq 0$  and $\eta>0$. We write $\mu_{\max}=\max\{\mu\in \R, \mu \in \Sp(AA^T)\}$, 
  $\mu_{\min}=\min\{\mu>0, \mu \in \Sp(AA^T)\}>0$, and  consider the  Optimistic Gradient Descent Ascent:
  \begin{align}
    \label{OGDA_algo}
    \forall t\geq 0,\;  \left\{
    \begin{array} {ccc} 
      x_{t+1} & = & x_t \;+\; 2\eta A y_t - \eta A y_{t-1},  \\
      y_{t+1} & = & y_t - 2\eta A^T x_t + \eta A^T x_{t-1}.
    \end{array}\right.
  \end{align}

If $\eta>\frac{1}{\sqrt{3\mu_{\max}}}$, there exist  initial conditions where the corresponding OGDA does not converge. In the sequel we assume that $$\eta<\frac{1}{\sqrt{3\mu_{\max}}}.$$
 Fix an  initial condition $(x_0, y_0,x_{-1},y_{-1})\in \R^n\times \R^p\times \R^n \times \R^p$, and denote by  $D$ be the distance from the initialization  to the set $\Ker(\Lambda-I)$. We have the following: \\

1) $(x_t, y_t)_t$ converges to the Nash equilibrium $\left(x_\infty, y_\infty\right)$, where  $x_\infty$ is the orthogonal projection of $x_0$ onto $\Ker(A^T)$ and $y_\infty$ is the orthogonal projection of $y_0$ onto $\Ker(A)$.\\
 
  
 2) If $\eta<\frac{1}{2\sqrt{\mu_{\max}}}$, the convergence is exponential and  for all $t\geq 0$,
\[ \|(x_t,y_t)-(x_\infty,y_\infty)\|\leq  C \; D\;  \lambda_{\rm max}^t  \;\]
with $\lambda_{\max}=\sqrt{\frac{1}{2}(1+\sqrt{1-4\eta^2\mu_{\min}})}$,  and $C=\sqrt{\frac{2}{1-\sqrt{\frac{1+5\eta^2\mu_{\max}}{2+ \eta^2\mu_{\max}}}}}
$. \\

3) If $\eta\geq \frac{1}{2\sqrt{\mu_{\max}}}$ we extend the definition of $\lambda _{\max}$ to  $\lambda _{\max}= \max\{\lambda_*, \lambda_{**}\}$, with
 \[\lambda_* = {\bf{1}}_{\mu_{\min} \leq \frac{1}{4\eta^2}} \sqrt{\frac{1}{2}(1+\sqrt{1-4\eta^2 \mu_{\min}})}\; \text{ and } \lambda_{**} =  {\bf{1}}_{\mu_{\max} \geq \frac{1}{4\eta^2}} \sqrt{2 \eta^2\mu_{\max} + \eta \sqrt{\mu_{\max}} \sqrt{4\eta^2 \mu_{\max}-1}}.\]

\noindent 3a) If  $\frac{1}{4\eta^2}\notin \S(A)$, the convergence is exponential with ratio $\lambda _{\max}$ and  for all $t\geq 0$, we have
$\|(x_t,y_t)-(x_\infty,y_\infty)\|\leq  C \;  D\;  \lambda_{\rm max}^t  ,$
$${\it with}\;\; C=\max\left\{{\bf{1}}_{\mu_{\min} < \frac{1}{4\eta^2}}  \sqrt{\frac{2}{1-\sqrt{\frac{1+5\eta^2\mu_{*}}{2+ \eta^2\mu_{*}}}}},{\bf{1}}_{\mu_{\max} > \frac{1}{4\eta^2}} \sqrt{\frac{2}{1-\sqrt{\frac{2+\eta^2\mu_{**}}{1+ 5\eta^2\mu_{**}}}}}\right\},$$
  $\mu_{*}=\max\{\mu \in \S(A), \eta \sqrt{\mu}<1/2\}$ and $\mu_{**}=\min\{\mu \in \S(A), \eta \sqrt{\mu}>1/2\}.$

\noindent 3b)  If $\eta\geq \frac{1}{2\sqrt{\mu_{\max}}}$  and $\frac{1}{4\eta^2}\in \S(A)$, then for every $\lambda>\lambda_{\max}$ there exists a constant $C'$ such that for all $t\geq 0$, $ \|(x_t,y_t)-(x_\infty,y_\infty)\|\leq  C' D \lambda^t.$
 \end{thm}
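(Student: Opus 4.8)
The plan is to reduce everything to a spectral analysis of the iteration matrix $\Lambda$, exploiting the block structure and the SVD of $A$. First I would decompose $\R^n = \Ker(A^T) \oplus \Img(A)$ and $\R^p = \Ker(A) \oplus \Img(A^T)$, noting these are $\Lambda$-invariant (after the obvious lift to $\R^{2(n+p)}$). On the kernel part, $Ay_t = 0$ and $A^Tx_t = 0$ for all $t$, so the iteration is stationary: $x_t$ stays equal to its $\Ker(A^T)$-component of $x_0$, giving $x_\infty$ and $y_\infty$ as claimed in part 1, and identifying $\Ker(\Lambda - I)$ with exactly these stationary configurations. So the whole problem becomes: understand $\Lambda$ restricted to the "image" part, where $AA^T$ and $A^TA$ act with eigenvalues in $\{\mu : \mu > 0,\ \mu \in \Sp(AA^T)\}$.

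Next I would diagonalize using the singular value decomposition: writing $A = U\Sigma V^T$, the coordinates decouple into $2$-dimensional (for each nonzero singular value, i.e.\ pairs of scalar subproblems) blocks of the form handled by the $A = (\sqrt\mu)$ scalar case. For each such $\mu$, $\Lambda$ restricted to the corresponding $4$-dimensional block has characteristic polynomial $\lambda^2(1-\lambda)^2 + \eta^2\mu(1-2\lambda)^2 = 0$ — the same computation as the $A=(1)$ case with $\eta^2$ replaced by $\eta^2\mu$. So the key analytic task is: solve this quartic, locate its roots $\lambda(\eta^2\mu)$ in the complex plane, and determine $\max_\mu |\lambda(\eta^2\mu)|$ over $\mu \in \S(A)$. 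This is where the formulas $\lambda_*$ and $\lambda_{**}$ come from: one regime ($\eta\sqrt\mu < 1/2$) gives one branch of moduli, the other regime ($\eta\sqrt\mu > 1/2$) another, the threshold $\mu = 1/(4\eta^2)$ being where the quartic's roots transition. The condition $\eta < 1/\sqrt{3\mu_{\max}}$ is precisely what keeps all moduli $< 1$ (at $\eta = 1/\sqrt{3\mu_{\max}}$ a root of modulus $1$ appears on the $\mu = \mu_{\max}$ block, giving the non-convergence claim), and $\eta < 1/(2\sqrt{\mu_{\max}})$ keeps us entirely in the "first regime" so that $\lambda_{\max}^2 = \tfrac12(1 + \sqrt{1 - 4\eta^2\mu_{\min}})$ as in part 2.

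For the quantitative bounds (the constant $C$), the subtlety is that $\Lambda$ is not normal — its eigenvectors are not orthogonal, and in the complex-eigenvalue regime the angle between conjugate eigenspaces can be small, so naively $\|\Lambda^t\| \le (\text{stuff}) \cdot \rho(\Lambda)^t$ picks up a constant depending on the conditioning of the eigenbasis. The plan is to compute, for each $2\times 2$ effective block, the change-of-basis matrix and its inverse explicitly, bound its condition number by the quantities $\sqrt{\tfrac{1+5\eta^2\mu}{2+\eta^2\mu}}$ and its reciprocal appearing in $C$, and take a worst case over the finitely many eigenvalues — this gives the closed-form $C$ with the indicator functions distinguishing whether a block with $\eta\sqrt\mu$ below/above $1/2$ is actually present. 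The distance $D$ to $\Ker(\Lambda - I)$ enters because after projecting off the stationary part, the remaining initial vector has norm $\le D$ (actually equal to $D$ by orthogonality of the decomposition), and the bound propagates linearly.

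The main obstacle will be step 3b, the borderline case $1/(4\eta^2) \in \S(A)$: there the quartic for the block $\mu = 1/(4\eta^2)$ has a repeated root of modulus exactly $\lambda_{\max}$, so $\Lambda$ has a nontrivial Jordan block and $\|\Lambda^t\|$ on that block grows like $t\,\lambda_{\max}^t$ rather than $\lambda_{\max}^t$. One cannot get a clean geometric bound at rate $\lambda_{\max}$, only at any rate $\lambda > \lambda_{\max}$ (absorbing the polynomial factor $t$ into $(\lambda/\lambda_{\max})^t$), with a constant $C'$ blowing up as $\lambda \downarrow \lambda_{\max}$ — hence the weaker statement. I expect the bulk of the work to be the careful root-location and conditioning estimates for the quartic across the two regimes; the structural reduction (invariant subspaces, SVD diagonalization, identification of the limit) is routine, and 3b is a standard Jordan-block argument once the repeated-root phenomenon is identified.
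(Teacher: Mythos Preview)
Your proposal is correct and follows essentially the same route as the paper: spectral analysis of $\Lambda$, reduction to the quartic $\lambda^2(1-\lambda)^2 + \eta^2\mu(1-2\lambda)^2 = 0$ indexed by $\mu \in \S(A)$, explicit root location in the two regimes $\eta\sqrt{\mu} \lessgtr 1/2$, and a Jordan-block argument at the threshold $\mu = 1/(4\eta^2)$. The paper does not invoke the SVD explicitly but instead computes the eigenspaces $E_\lambda$ of $\Lambda$ directly and proves their mutual orthogonality across distinct values of $\mu$ and between conjugate pairs within each $\mu$-block, which is exactly the block structure your SVD would expose; the constant $C$ then arises from bounding the cosine of the angle between the two remaining non-orthogonal eigenspaces $E_{\lambda_1(\mu)}, E_{\lambda_2(\mu)}$ in each $4$-dimensional block, yielding the expression $\sqrt{(1+5\eta^2\mu)/(2+\eta^2\mu)}$ you identified.
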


The proof of Theorem \ref{thm_conv_OGDA_xAy} is in section \ref{subsec_proof_thm_conv_OGDA_xAy} of the Appendix. The most delicate aspects  are the control of the angles between  eigenvectors of the matrix $\Lambda$  for  parts {\it 2)} and {\it 3a)}, and the analysis of the Jordan normal form of $\Lambda$  for part {\it 3b)}. 

\subsection{Comments on Theorem \ref{thm_conv_OGDA_xAy}}
\label{subsec_comments}

Several remarks are in order. \\

\begin{enumerate}[label=\alph*),wide]

\item Theorem \ref{thm_conv_OGDA_xAy} also holds for $A=0$ if we use the convention $\lambda_{\max}=0$ in this case.

  \item Both  $\lambda_*$ and $\lambda_{**}$ are in $[0,1)$ and $\lambda_{\max}\in [\frac{\sqrt{2}}{2}, 1)$. Part 3a) generalizes part 2) of the Theorem, since if $\eta<\frac{1}{2 \, \sqrt{\mu_{\max}}}$  we have ${\bf{1}}_{\mu_{\min} < \frac{1}{4\eta^2}}=1$,  ${\bf{1}}_{\mu_{\max} \geq \frac{1}{4\eta^2}}=0$ and $\mu_{*}=\mu_{\max}$.


  \item $\Ker(\Lambda-I)$ is  the set $\left\{\left(\begin{array}{c} x\\ y\\ x\\y\end{array}\right), (x,y) \;{\rm Nash}\;{\rm equilibrium}\right\}$. Since it contains $0$, $D\leq \|(x_0,y_0,x_{-1},y_{-1})\|=\|Z_0\|$.

  \item  Notice that the  speed of convergence is independent of the dimension: $\lambda_{\rm max}$, as well as $C$ and $D$ from parts 2) and 3a) of the Theorem,  only depends on $\S(A)$ and $\eta$, and not  on $n$ nor $p$.

  \item The step-size $\eta$ is fixed here, and does not evolve with time.  If $\eta<\frac{1}{2 \, \sqrt{\mu_{\max}}}$, the ratio of convergence $\lambda_{\rm max}$ is better when $\eta$ is large, so that having $\eta$ too small is not a good option for large $t$. We  elaborate on the optimal choice of $\eta$  in subsection \ref{subsec_optimal_ratio}. 

  
  \item When looking at the expression of the limit $(x_\infty, y_\infty)$ with regard to the initialization $(x_0, y_0, x_{-1}, y_{-1})$, one should notice that $x_\infty$, somehow  surprisingly, only depends  on $x_0$, and that $y_\infty$ only depends  on $y_0$.
  Thus, $y_0$, $y_{-1}$ and $x_{-1}$ will only have an influence of the curve $(x_t)_t$, but not on its limit.

\item Our ratio $\lambda_{\max}$ is indeed optimal for any matrix $A\neq 0$.
  Indeed,  Lemma \ref{lemma_optimal_value} in the Appendix shows  that there exists a real vector $Z_0 \in \R^{n+p+n+p}$ and a constant $c>0$ such that $\norm{Z_t} > c\lambda_{\max}^t$.
  
  This shows that our result improves on the existing literature.
  We still do a tour below (see g), h), i) and j)) of the main related  papers, to see how Theorem \ref{thm_conv_OGDA_xAy} compare with their results.
  This tour is summarized in Table \ref{table_convergence}, where we only mention our results related to the case where $\eta <\frac{1}{2\sqrt{\mu_{\max}}}$.

  \begin{table}[h]
  \hspace*{-2cm}
  \begin{tabular}{|c|c|c|c|c|}
  \hline
  Reference & Convergence Speed & Exponential Parameter & Constant & Learning Rate \\
  \hline
  \cite{daskalakis2017training} & Polynomial &  \ding{55} & \ding{55} & $\eta < \frac{\mu_{\min}^2}{3}$ \\
  \hline
  \cite{liang2019interaction} & Exponential & $\exp(\frac{-\eta^2\mu_{\min}}{2})$ & 4 & $\eta = \frac{1}{\sqrt{8 \mu_{\max}}}$ \\
  \hline
  \cite{peng2020training} & Exponential & $\sqrt{\frac{1}{2}\left(1+\sqrt{1-\eta^2 \mu_{\min}} \right)}$ & Unbounded & $\eta \leq \frac{1}{2\sqrt{\mu_{\max}}}$ \\
  \hline
  \cite{mokhtari2020unified} & Exponential & $\sqrt{1 -\frac{\mu_{\min}\eta^2}{4}}$ & 1 & $\eta = \frac{1}{\sqrt{40 \mu_{\max}}}$ \\
  \hline
  This paper & Exponential & $\sqrt{\frac{1}{2}\left(1+\sqrt{1-4\eta^2 \mu_{\min}} \right)}$ & $C=\sqrt{\frac{2}{1-\sqrt{\frac{1+5\eta^2\mu_{\max}}{2+ \eta^2\mu_{\max}}}}}$ & $\eta <\frac{1}{2\sqrt{\mu_{\max}}}$ \\
  \hline
  \end{tabular}
  \caption{Comparison table of the convergence rate}
  \label{table_convergence}
\end{table}

  \item \cite{daskalakis2017training} was the first paper to study last iterate convergence of OGDA in saddle point problems. 
  They consider the case of a matrix $A$ with $\mu_{\max}\leq 1$ and $0<\eta< \frac{1}{3} \mu_{\min}^2$, where $\mu_{\min}>0$ is  the smallest positive eigenvalue of $A^T A$.
  Assuming $x_{-1}=x_0$ and $y_{-1}=y_0$, and introducing  $\Delta_t=\|A^Tx_t\|^2+\|Ay_t\|^2$, they show that for $t\geq 2$, 
  \begin{equation}
    \label{eq_bound_daskalakis}
    \Delta_t\leq (1-\eta^2 \mu_{\min}^2) \Delta_{t-1}+16 \eta^3 \Delta_0,
  \end{equation}
  This does not prove convergence of $(\Delta_t)_t$ to 0 but shows that $\Delta_t$ is small for  large $t$ and small $\eta$.  \\
  Let us fix an $\epsilon>0$, and let $\eta$ be at the utmost of order $\epsilon^2$.\\
  Denote by $T$ the number of epochs needed for $\norm{(x_t, y_t)}$ to be smaller than $\epsilon$. Then:
  \begin{equation}
    \frac{16 \epsilon^2 \Delta_0}{\mu_{\min}^2}+ {(1-\epsilon^4 \mu_{\min}^2)}^{T-2}(\Delta_2-\frac{16 \epsilon^2 \Delta_0}{\mu_{\min}^2}) \leq \epsilon \iff T \geq \alpha \epsilon^{-4} \log \left(\frac{1}{\epsilon}\right)
  \end{equation}
  for some constant $\alpha$.
  Due to the term in $\epsilon^{-4}$, they have a polynomial convergence, which is worth than the actual exponential convergence.

  \item Let us now compare our result to the more recent work \cite{liang2019interaction} of Liang and Stokes.
  Theorem \ref{thm_conv_OGDA_xAy} holds for any initialization $(x_0,y_0,x_{-1},y_{-1})$, for any matrix   and for any small enough value of $\eta$,   not only for a particular value of $\eta$  and  squared invertible matrices, as in the paper of Liang and Stokes. 
  Consider for instance the particular case where $\eta^2\mu_{\max}=1/8$, studied in  \cite{liang2019interaction} for  the particular case of $A$ square invertible.
  We still denote by $\mu_{\min}>0$ the smallest eigenvalue of $A^T A$.
  Liang and Stokes obtained:
  \begin{equation}
    \label{eq_bound_liang}
    \|(x_t,y_t)-(0,0)\|\leq  4 \sqrt{2}\; \lambda_L^t\; \max\{\|(x_0,y_0)\|, \|(x_1,y_1)\|\},
  \end{equation}
  with \[\lambda_L={\rm exp} \left(-\frac{\mu_{\min}}{16 \mu_{\max}}\right).\] 

  Since $\|(x_0, y_0,x_{-1},y_{-1})\|\leq \sqrt{2} \max\{\|(x_0,y_0)\|, \|(x_{-1},y_{-1}\|\}$, the constants in the above inequality (\ref{eq_bound_liang}) is just slightly bigger than the constant $C_\eta$ we find for $\eta=\frac{1}{\sqrt{8\mu_{\max}}}$.
  
  Moreover, by comparing the derivative of the functions $ f(x) = \sqrt{\frac{1}{2}(1+\sqrt{1-x/2})}$ and $g(x) = \exp\left(-\frac{x}{16}\right)$, we can show that $\lambda_{\max} = f \left(\frac{\mu_{\min}}{\mu_{\max}}\right)\leq \lambda_L = g \left(\frac{\mu_{\min}}{\mu_{\max}}\right)$ for all $\frac{\mu_{\min}}{\mu_{\max}} \in [0, 1]$, showing that in addition to  generalizing the convergence results of Liang and Stokes to all matrices, we also improve  their bound.\\

  \item Another exponential bound can be seen in \cite{peng2020training}. They showed that for any learning rate $\eta$,
  \[ \norm{(x_t, y_t) - (x_\infty, y_\infty)} \leq C_P \lambda_P^T \]
  where $\lambda_P = \sqrt{\frac{1}{2}+\frac{1}{2}\sqrt{1-\mu_{\min}\eta^2}}$.
  The  value of $\lambda_P$ is still higher than $\lambda_{\max}$ for any value of $\eta$ and of $A$, and their is  no control on the  constant $C_P$.

  \item In  \cite{mokhtari2020unified}, Mokhtari \textit{et al.} showed once again the convergence of OGDA for square invertible matrices and for a particular value of $\eta$.
For  $\eta$ such that $\eta^2 \mu_{\max} = 40$, they show:
   \[ \norm{(x_t, y_t) - (x_\infty, y_\infty)} \leq \lambda_M^t \hat{r}_0 \]
  where $\lambda_M = \sqrt{1-\frac{\mu_{\min}}{6400 \mu_{\max}}}$.
  Once again, it can be shown that for any value of $\frac{\mu_{\min}}{\mu_{\max}}$, $\lambda_{\max} \leq \lambda_M$.



  \item We finally illustrate a part of Theorem \ref{thm_conv_OGDA_xAy}: the expression of the limit value of OGDA with respect to the initialization.
  Here, we are interested in the independence of the target vector $(x_\infty, y_\infty)$ with regard to the vectors $(x_{-1},y_{-1})$.
  For this, we plotted in Figure \ref{fig_plot_init} several runs of OGDA for the game with matrix $A=\begin{pmatrix} 1 & -1\\ -1 & 1 \end{pmatrix}$.
  On the left image, we plotted the first coordinate of $x_t$ with the first coordinate of $y_t$, and on the right image, we plotted the second coordinate of $x_t$ with the second coordinate of $y_t$.
  For each of the run, the same vector $(x_0, y_0)$ was chosen, and different values were taken for $x_{-1}$ and $y_{-1}$.
  We can see that this change of value changes the dynamic of the system mostly at the beginning, but that all of the experiments still converge to the same target point, which only depends  on the vector $(x_0, y_0)$, as one can see from Theorem \ref{thm_conv_OGDA_xAy}. 
  
  \begin{figure}[h]
  \begin{subfigure}{0.5\textwidth}
  \includegraphics[width = \textwidth]{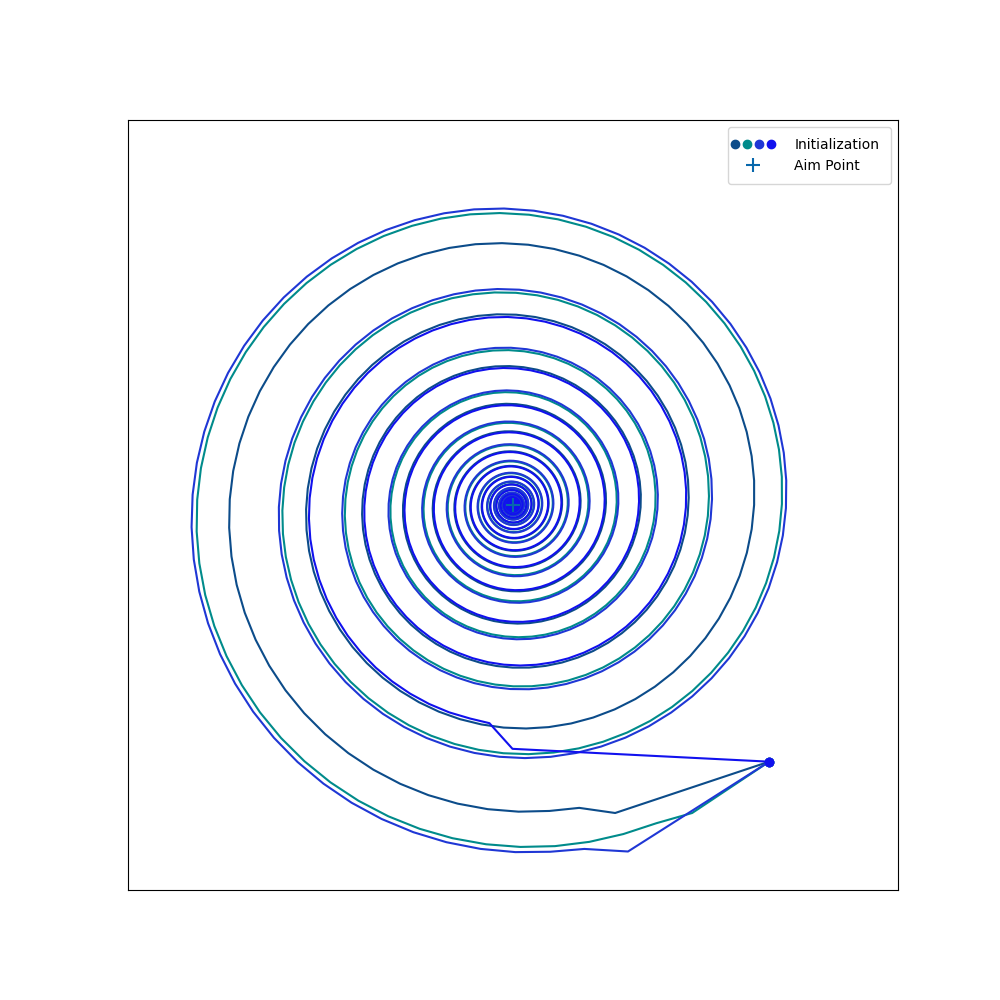}
  \caption{$x_{t,1}$ with respect to $y_{t,1}$}
  \end{subfigure}
  \begin{subfigure}{0.5\textwidth}
  \includegraphics[width = \textwidth]{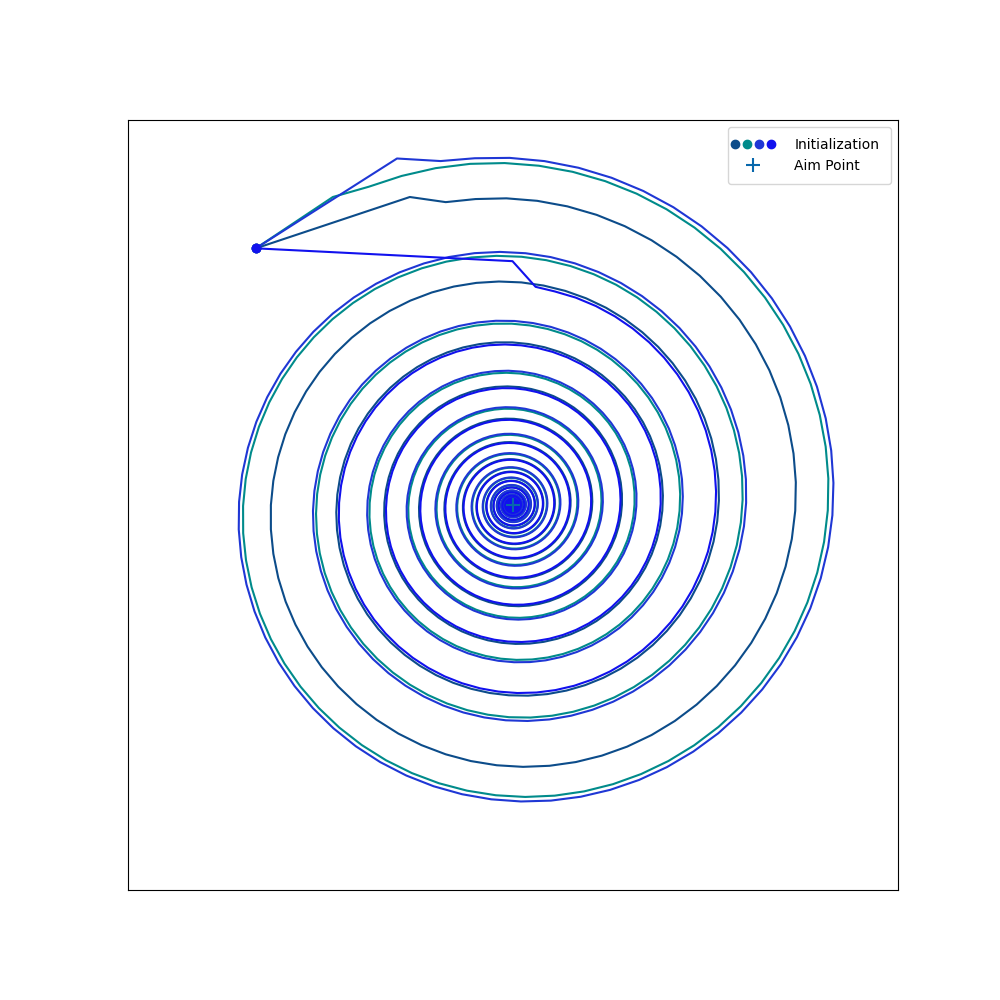}
  \caption{$x_{t,2}$ with respect to $y_{t,2}$}
  \end{subfigure}\\
  \caption{Several Plots of OGDA}
  \label{fig_plot_init}
  \end{figure}

\end{enumerate}

\subsection {Zero-sum Games $x^TAy+b^Tx+c^Ty+d$}
\label{subsec_general_zs}

Let us now consider the general bilinear  case, where we add to the  term $x^TAy$, a linear function in $(x,y)$. Fix  $A\in \R^{n \times p}$,  $b \in \R^n$, $c\in \R^p$  and $d \in \R$.
We look at  the zero-sum game where simultaneously player 1 chooses $x$ in $\R^n$, player 2 chooses $y$ in $\R^p$, and finally player 2 pays the quantity  $g(x,y)=x^TAy+b^Tx+c^Ty+d$  to player 1.
The Nash equilibria are the vectors $(x,y)$ such that $g(x,y)=\sup_{x'\in \R^n} g(x',y)=\inf_{y'\in \R^p}g(x,y')$, i.e.  such that $A^Tx+c = 0$ and $Ay+b=0$.  Here, there exists a Nash equilibrium if and only if $-b$ is in the range of $A$ and $-c$ is in the range of $A^T$. \\

The next result is the extension of Theorem \ref{thm_conv_OGDA_xAy} to this setup. The proof is in  Appendix \ref{subsec_proof_thm_conv_OGDA_zs_general}.

\begin{thm}
  \label{thm_conv_OGDA_zs_general}
  Let $A \in \R^{n\times p}\backslash\{0\}$, $b \in \R^n$, $c \in \R^p$, $d \in \R$ and  $\eta>0$. We write $\mu_{\max}=\max\{\mu\in \R, \mu \in \Sp(AA^T)\}$, 
  $\mu_{\min}=\min\{\mu>0, \mu \in \Sp(AA^T)\}$, and  consider the  Optimistic Gradient Descent Ascent:
  \begin{equation}
    \label{OGDAgeneralbili}
    \forall t\geq 0,\;  \left\{
      \begin{array} {ccc} 
        x_{t+1} & = & x_t \;+\; 2\eta (A y_t+b) - \eta (A y_{t-1}+b),  \\
        y_{t+1} & = & y_t - 2\eta (A^T x_t+c) + \eta (A^T x_{t-1}+c).
      \end{array}\right.
  \end{equation}

If the set ${\{(x,y) | A^Tx+c = 0, Ay+b=0\}}$ of Nash equilibria is empty, then $(x_t, y_t)_t$ diverges. 
If $\eta>\frac{1}{\sqrt{3\mu_{\max}}}$, there exist  initial conditions where the corresponding OGDA does not converge. In the sequel we assume that the set of Nash equilibria is not empty and that $\eta<\frac{1}{\sqrt{3\mu_{\max}}}.$

 Fix any  initial condition $(x_0, y_0,x_{-1},y_{-1})\in \R^n\times \R^p\times \R^n \times \R^p$, and let $D$ be the distance from $(x_0,  y_0, x_{-1}, y_{-1})$ to the set $\{(x,y,x,y)\in \R^n\times \R^p\times \R^n\times \R^p, A^Tx+c=0, Ay+b=0\}$. We have the following: \\

1) $(x_t, y_t)_t$ converges to the Nash equilibrium $\left(x_\infty, y_\infty\right)$, where  $x_\infty$ is the orthogonal projection of $x_0$ onto $\{x\in \R^n | A^Tx+c=0\}$, and $y_\infty$ is the orthogonal projection of $y_0$ onto $\{y \in \R^p | Ay+b=0\}$.\\

 2) If $\eta<\frac{1}{2\sqrt{\mu_{\max}}}$, the convergence is exponential and  for all $t\geq 0$,
\[ \|(x_t,y_t)-(x_\infty,y_\infty)\|\leq  C \; D\;  \lambda_{\rm max}^t  \;\]
with $\lambda_{\max}=\sqrt{\frac{1}{2}(1+\sqrt{1-4\eta^2\mu_{\min}})}$, and $C=\sqrt{\frac{2}{1-\sqrt{\frac{1+5\eta^2\mu_{\max}}{2+ \eta^2\mu_{\max}}}}}
$. \\

3) If $\eta\geq \frac{1}{2\sqrt{\mu_{\max}}}$ we extend the definition of $\lambda _{\max}$ to  $\lambda _{\max}= \max\{ \lambda_*, \lambda_{**}\}$, with
 \[\lambda_* = {\bf{1}}_{\mu_{\min} \leq \frac{1}{4\eta^2}} \sqrt{\frac{1}{2}(1+\sqrt{1-4\eta^2 \mu_{\min}})}\; \text{ and } \lambda_{**} = {\bf{1}}_{\mu_{\max} \geq \frac{1}{4\eta^2}} \sqrt{2 \eta^2\mu_{\max} + \eta \sqrt{\mu_{\max}} \sqrt{4\eta^2 \mu_{\max}-1}}.\]

\noindent 3a) If $\eta\geq \frac{1}{2\sqrt{\mu_{\max}}}$  and $\frac{1}{4\eta^2}\notin \S(A)$, the convergence is exponential with ratio $\lambda _{\max}$ and  for all $t\geq 0$,
$ \|(x_t,y_t)-(x_\infty,y_\infty)\|\leq  C \;  D\;  \lambda_{\rm max}^t  ,\;$
 $${\it with}\;\; C=\max\left\{{\bf{1}}_{\mu_{\min} < \frac{1}{4\eta^2}}  \sqrt{\frac{2}{1-\sqrt{\frac{1+5\eta^2\mu_{*}}{2+ \eta^2\mu_{*}}}}},{\bf{1}}_{\mu_{\max} \geq \frac{1}{4\eta^2}} \sqrt{\frac{2}{1-\sqrt{\frac{2+\eta^2\mu_{**}}{1+ 5\eta^2\mu_{**}}}}}\right\},$$
$\mu_{*}=\max\{\mu \in \S(A), \eta \sqrt{\mu}<1/2\}$ and $\mu_{**}=\min\{\mu \in \S(A), \eta \sqrt{\mu}>1/2\}.$

\noindent 3b)  If $\eta\geq \frac{1}{2\sqrt{\mu_{\max}}}$  and $\frac{1}{4\eta^2}\in \S(A)$, then for every $\lambda>\lambda_{\max}$ there exists a constant $C'$ such that for all $t\geq 0$, $ \|(x_t,y_t)-(x_\infty,y_\infty)\|\leq  C' D \lambda^t.$
\end{thm}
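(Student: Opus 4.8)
The plan is to reduce this affine game to the homogeneous game $x^TAy$ of Theorem \ref{thm_conv_OGDA_xAy} by an affine change of variables, after first handling the degenerate case. Throughout, for a linear subspace $V$ write $\pi_V$ for the orthogonal projection onto $V$. First, suppose the Nash set is empty. Decompose $b=b_1+b_2$ and $c=c_1+c_2$ with $b_1\in\Img(A)$, $b_2\in\Img(A)^\perp=\Ker(A^T)$, $c_1\in\Img(A^T)$ and $c_2\in\Img(A^T)^\perp=\Ker(A)$. Since $Ay_t\in\Img(A)$ and $A^Tx_t\in\Img(A^T)$ for every $t$, applying $\pi_{\Ker(A^T)}$ (resp. $\pi_{\Ker(A)}$) to the two lines of \eqref{OGDAgeneralbili} annihilates the matrix terms and gives, for all $t\geq 0$,
\[ \pi_{\Ker(A^T)}(x_{t+1}) = \pi_{\Ker(A^T)}(x_t) + \eta\, b_2, \qquad \pi_{\Ker(A)}(y_{t+1}) = \pi_{\Ker(A)}(y_t) - \eta\, c_2 . \]
These components are arithmetic progressions, hence diverge whenever $b_2\neq 0$ or $c_2\neq 0$; and a Nash equilibrium exists iff $-b\in\Img(A)$ and $-c\in\Img(A^T)$, i.e. iff $b_2=0$ and $c_2=0$. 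So when the Nash set is empty, $(x_t,y_t)_t$ diverges.

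Now assume the Nash set is nonempty and fix any $(x^*,y^*)$ in it, so that $Ay^*=-b$ and $A^Tx^*=-c$. Put $\tilde x_t=x_t-x^*$, $\tilde y_t=y_t-y^*$. Then $Ay_t+b=A\tilde y_t$ and $A^Tx_t+c=A^T\tilde x_t$, so substituting into \eqref{OGDAgeneralbili} shows that $(\tilde x_t,\tilde y_t)_t$ obeys \emph{exactly} the homogeneous OGDA recursion \eqref{OGDA_algo} for the matrix $A$, with initialization $(\tilde x_0,\tilde y_0,\tilde x_{-1},\tilde y_{-1})=(x_0,y_0,x_{-1},y_{-1})-(x^*,y^*,x^*,y^*)$. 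Since $A$ is unchanged, so are $\mu_{\min}$, $\mu_{\max}$, $\S(A)$, $\lambda_{\max}$, $C$, and all the $\eta$-thresholds.

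It remains to transfer the conclusions. Applying Theorem \ref{thm_conv_OGDA_xAy} to $(\tilde x_t,\tilde y_t)_t$ gives $\tilde x_t\to \pi_{\Ker(A^T)}(x_0-x^*)$ and $\tilde y_t\to \pi_{\Ker(A)}(y_0-y^*)$, hence $x_t\to x^*+\pi_{\Ker(A^T)}(x_0-x^*)=:x_\infty$ and $y_t\to y^*+\pi_{\Ker(A)}(y_0-y^*)=:y_\infty$. As $\pi_{\Ker(A^T)}(x_0-x^*)\in\Ker(A^T)$, we get $A^Tx_\infty+c=A^Tx^*+c=0$ and likewise $Ay_\infty+b=0$, so the limit is a Nash equilibrium. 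Moreover $\{x:A^Tx+c=0\}$ is the affine subspace $x^*+\Ker(A^T)$, and the orthogonal projection of $x_0$ onto it is precisely $x^*+\pi_{\Ker(A^T)}(x_0-x^*)$, a quantity independent of the choice of $x^*$ — this yields item 1), and symmetrically for $y$. For the rates, the set $\{(x,y,x,y):A^Tx+c=0,\,Ay+b=0\}$ is the translate by $(x^*,y^*,x^*,y^*)$ of $\Ker(\Lambda-I)=\{(x,y,x,y):(x,y)\text{ Nash of }x^TAy\}$, and translations are isometries; hence $D$ here equals the quantity $D$ produced by Theorem \ref{thm_conv_OGDA_xAy} for $(\tilde x_t,\tilde y_t)$, and since also $\|(x_t,y_t)-(x_\infty,y_\infty)\|=\|(\tilde x_t,\tilde y_t)-(\tilde x_\infty,\tilde y_\infty)\|$, items 2), 3a) and 3b) follow verbatim. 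Finally, for $\eta>\frac{1}{\sqrt{3\mu_{\max}}}$: if the Nash set is empty the divergence shown above already provides non-converging orbits, and if it is nonempty, translating by $(x^*,y^*,x^*,y^*)$ the bad initial conditions of Theorem \ref{thm_conv_OGDA_xAy} gives non-converging orbits of \eqref{OGDAgeneralbili}.

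\textbf{Main obstacle.} There is no new analytic content — the spectral analysis, the angle-control estimates and the Jordan-form argument are all inherited from Theorem \ref{thm_conv_OGDA_xAy}. The only care required is bookkeeping: verifying that the affine substitution turns \eqref{OGDAgeneralbili} exactly into \eqref{OGDA_algo} (this is where the Nash equations $Ay^*=-b$, $A^Tx^*=-c$ enter), checking that the projection characterization of the limit is intrinsic (independent of the auxiliary point $x^*$), and noting that $D$ is unaffected by the shift because it is a distance and the shift is an isometry.
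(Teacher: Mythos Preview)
Your proof is correct and follows essentially the same approach as the paper: translate by a fixed Nash equilibrium $(x^*,y^*)$ to reduce \eqref{OGDAgeneralbili} to the homogeneous recursion \eqref{OGDA_algo}, then import all conclusions of Theorem~\ref{thm_conv_OGDA_xAy}. Your treatment is in fact slightly sharper than the paper's in two places: for the empty-Nash case you give a concrete divergence mechanism (arithmetic progression of the kernel components) whereas the paper only observes that there is no fixed point, and for the rate you note directly that $D$ is preserved by the isometric shift, whereas the paper argues an inequality by taking an infimum over $(x^*,y^*)$.
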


\subsection{Optimal geometric ratio of convergence}
\label{subsec_optimal_ratio}
Given a matrix $A\neq 0$, one can ask which $\eta\in (0, \frac{1}{\sqrt{3\mu_{\max}}})$ minimizes the geometric  ratio of convergence $\lambda_{\max}=$ 
  $$\max\left\{{\bf{1}}_{\mu_{\min} \leq \frac{1}{4\eta^2}} \sqrt{\frac{1}{2}(1+\sqrt{1-4\eta^2 \mu_{\min}})}\; , {\bf{1}}_{\mu_{\max} \geq \frac{1}{4\eta^2}} \sqrt{2 \eta^2\mu_{\max} + \eta \sqrt{\mu_{\max}} \sqrt{4\eta^2 \mu_{\max}-1}}\right\}.$$
  Let us introduce the variable  $\alpha=\frac{\mu_{\min}}{\mu_{\max}}\in (0,1]$. 
 Computations show that the optimal values for $\eta$ and $\lambda_{\max}$ are:
 
 $$\eta^*\sqrt{\mu_{\max}}=\sqrt{\frac{1}{32 \alpha}\left(3+6 \alpha-\alpha^2- (1-\alpha)\sqrt{(1-\alpha)(9-\alpha)}\right)}\in [\frac12, \frac{1}{2\sqrt{\alpha}}],$$
  
  
  $$\lambda_{\max}^*=\sqrt{\frac{1}{2} \left( 1+ \sqrt{1-\frac{1}{8} \left(3+6 \alpha-\alpha^2 -(1-\alpha)\sqrt{(1-\alpha)(9-\alpha)}\right)}\right)}.$$
  
  For instance, if $A= \left(\begin{array}{ccc} 0& 0 & 0 \\ 0& 1 &0\\ 0&0&2\end{array}\right)$, $\mu_{\min}=1$, $\mu_{\max}=4$, $\eta^*\sqrt{\mu_{\max}}\simeq 0.5608$ and $\lambda_{\max}^*\simeq 0.956.$

When $\alpha$ varies from 0 to 1,  $\eta^*\sqrt{\mu_{\max}}$ decreases  from $\frac{1}{\sqrt 3}\simeq 0.577$ to $1/2$ and $\lambda_{\max}^*$ decreases from 1 to $\frac{\sqrt{2}}{2}$. We always have $\eta^*\sqrt{\mu_{\max}}\geq 1/2$, justifying our interest for the cases where $\eta \geq \frac{1}{2\sqrt{\mu_{\max}}}$ in Theorems \ref{thm_conv_OGDA_xAy} and \ref{thm_conv_OGDA_zs_general}.

Convergence is faster when $\alpha$ increases, and is optimal when $\alpha=1$, i.e. when $AA^T$ has a unique nonzero eigenvalue. It is for instance the case  when $A$ is (up to a multiplicative constant)  an orthogonal matrix. 
 

\section{General-Sum Bilinear Games}
\label{sec_nzs}

We extend the analysis of section \ref{section_bilinear} to the class of   general-sum bilinear games. For simplicity we will only consider here values of $\eta$ corresponding to case {\it 2)}  of Theorems \ref{thm_conv_OGDA_xAy} and \ref{thm_conv_OGDA_zs_general}.

\subsection{General-Sum Games $(x^TAy,x^TBy)$}
\label{subsec_xAy_xBy}

We consider here the case of a non zero-sum game given by two matrices $A$ and $B$ in $\R^{n\times p}$. Simultaneously player 1 chooses $x$ in $\R^n$ and  player 2 chooses $y$ in $\R^p$, then the payoff to player 1 is $x^TAy$ and the payoff to player 2 is $x^TBy$. By definition, $(x,y)\in \R^n \times \R^p$ is a  Nash equilibrium of the game if:
\[x^TAy = \max_{x'\in \R^n} x'^TAy \; \; {\rm and}\;\;  x^TBy= \max_{y'\in \R^p} x^TBy'.\]
The set of Nash equilibria is $\Ker(B^T)\times \Ker(A)$, which always contains $(0,0)$. In section \ref{section_bilinear} we have considered the case where $B=-A$. \\

We now introduce the OGDA for general-sum games:
\begin{defi}  The Optimistic Gradient Descent Ascent algorithm for the general-sum game $(x^TAy$, $x^TBy)$ is defined by:
  \begin{equation}
  \label{OGDA_nzs}
  \forall t\geq 0,\left\{
  \begin{array}{ccccccc}
  x_{t+1} &= & x_t &+& 2\eta A y_t &-& \eta A y_{t-1},\\
  y_{t+1}& = & y_t &+& 2\eta B^T x_t &-& \eta B^T x_{t-1},
  \end{array}\right.
  \end{equation}
where $\eta>0$ is a fixed parameter, and $(x_0,y_0,x_{-1},y_{-1})\in \R^{n}\times \R^p\times \R^n \times \R^p$ is the initialization.\end{defi}
Notice that if $(x_t,y_t)_t$ converges, the limit is a Nash equilibrium. We first observe  that there exist  general-sum games $(x^TAy,x^TBy)$ for which OGDA does not converge. 
A simple  case is when $A=B=(1)$. Assume for simplicity that $x_0=y_0$ and $x_{-1}=y_{-1}$. Then for all $t$,  $x_t=y_t$ and \[x_{t+1}=x_t(1+2\eta)-\eta x_{t-1}.\]
This can be written $x_{t+1}-x_t=\eta  x_t+\eta (x_t-x_{t-1})$, so that if $x_0>\max\{x_{-1},0\}$ then $x_{t+1}>\max\{x_{t},0\}$ for all $t$. It implies $x_{t+1}> x_t(1+\eta)$ for all $t$, so  $x_t\xrightarrow[t \to \infty]{}+\infty$.
We have  obtained:

\begin{pro}
  Convergence of  OGDA  may fail for general-sum games $(x^TAy,x^TBy)$.
  In other words, for any $\eta>0$, there exist some initialization $(x_0, y_0, x_{-1}, y_{-1})$ such that $(x_t, y_t)_t$ does not converge.
\end{pro}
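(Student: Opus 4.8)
The plan is to exhibit, for each $\eta>0$, an explicit initialization for which $(x_t,y_t)_t$ diverges, reducing the $n=p=1$ case $A=B=(1)$ to a scalar linear recurrence. The key observation is that the dynamics preserve the ``diagonal'' $\{x_{-1}=y_{-1},\ x_0=y_0\}$: writing the OGDA update \eqref{OGDA_nzs} out with $A=B=(1)$ gives $x_{t+1}=x_t+2\eta y_t-\eta y_{t-1}$ and $y_{t+1}=y_t+2\eta x_t-\eta x_{t-1}$, which are symmetric in the roles of $x$ and $y$. Hence if $x_0=y_0$ and $x_{-1}=y_{-1}$, an immediate induction shows $x_t=y_t$ for all $t\geq 0$, and the system collapses to the one-dimensional recurrence $x_{t+1}=(1+2\eta)x_t-\eta x_{t-1}$.

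Next I would analyze this scalar recurrence. Rather than computing the roots of the characteristic polynomial $\lambda^2-(1+2\eta)\lambda+\eta$, it is cleaner to rewrite the recurrence as $x_{t+1}-x_t=\eta x_t+\eta(x_t-x_{t-1})$ and argue by induction on the two quantities $x_t>0$ and $x_t-x_{t-1}>0$. Concretely, choose any initialization with $x_0>\max\{x_{-1},0\}$ (so in particular $x_0>0$ and $x_0-x_{-1}>0$); then the identity above gives $x_1-x_0>0$ and $x_1>x_0>0$, and inductively, whenever $x_t>0$ and $x_t>x_{t-1}$, we get $x_{t+1}-x_t=\eta x_t+\eta(x_t-x_{t-1})>0$, hence $x_{t+1}>x_t>0$. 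So both positivity and monotonicity propagate. Dropping the nonnegative term $\eta(x_t-x_{t-1})$ then yields $x_{t+1}-x_t\geq \eta x_t$, i.e. $x_{t+1}\geq (1+\eta)x_t$, whence $x_t\geq (1+\eta)^t x_0\xrightarrow[t\to\infty]{}+\infty$. Since $x_t=y_t$, the sequence $(x_t,y_t)_t$ does not converge.

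Finally, to obtain the stated conclusion for a general-sum game $(x^TAy,x^TBy)$ with $A=B=(1)$ and arbitrary $\eta>0$, it suffices to remark that the above works verbatim for every $\eta>0$: no smallness of $\eta$ was used. This proves the proposition. There is essentially no serious obstacle here — the only point requiring a little care is making sure the right invariant is tracked, namely the \emph{pair} $(x_t,x_t-x_{t-1})$ rather than $x_t$ alone, since $x_t>0$ by itself does not propagate under the recurrence; coupling it with the monotonicity $x_t>x_{t-1}$ is what makes the induction go through, and this is exactly what the decomposition $x_{t+1}-x_t=\eta x_t+\eta(x_t-x_{t-1})$ is designed to expose.
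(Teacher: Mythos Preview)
Your proof is correct and follows essentially the same argument as the paper: the same example $A=B=(1)$, the same reduction to the diagonal $x_t=y_t$, the same rewriting $x_{t+1}-x_t=\eta x_t+\eta(x_t-x_{t-1})$, the same initialization $x_0>\max\{x_{-1},0\}$, and the same lower bound $x_{t+1}\geq(1+\eta)x_t$. Your write-up is simply a bit more explicit about the inductive invariant than the paper's.
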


 We  now present  a simple trick, a modification of OGDA which  ensures convergence to Nash equilibria in general-sum games when $\eta$ is small enough.
 
 The set of Nash equilibria of the game $(x^TAy$, $x^TBy)$ is $\Ker(B^T)\times \Ker(A)$.  The set  $\Ker(B^T)$ is the set of optimal strategies of player 1 in the zero-sum game $x^T(-B)y$ (where player 1 wants to minimize the payoff $x^TBy$) and $\Ker(A)$ is the set of optimal strategies of player 2  in the zero-sum game $x^TAy$ (where player 2 wants to minimize  $x^TAy$). As a consequence, the convergence of OGDA for zero-sum games gives here a simple algorithm converging to a Nash equilibrium of a general-sum game. 
 
 \begin{defi}
   \label{def_DOGDA}
   The Double Optimistic Gradient Descent Ascent (DOGDA) for the general-sum game $(x^TAy$, $x^TBy)$ is defined by:
   \begin{equation}
     \label{eq_DOGDA}
     \left\{
       \begin{array} {ccccc} 
         x_{t+1}& = & x_t &- 2 \eta B y'_t  &+ \eta B y'_{t-1},\\
         y'_{t+1} & = & y'_t &+2 \eta B^T x_t &- \eta B^T x_{t-1},\\
         x'_{t+1}& = & x'_t &+2 \eta A y_t  & - \eta A y_{t-1},\\
         y_{t+1} & = & y_t &-2 \eta A^T x'_t & +  \eta A^T x'_{t-1}.
       \end{array}\right.
   \end{equation}
   where $\eta>0$ is a fixed parameter, and $z_0:=(x_0,y'_0, x'_0, y_0, x_{-1},y'_{-1}, x'_{-1}, y_{-1} )\in {(\R^{n}\times \R^p)^4}$ is the initialization.
 \end{defi}

\begin{pro}
  \label{thm_conv_DOGDA}
  $\;$ {\bf Convergence  of DOGDA to a Nash Equilibrium}.

Let $A, B\in \R^{n\times p}$  and $0<\eta<\frac{1}{2 \, \sqrt{\mu_{\max}}}$, where $\mu_{\max}=\max\{\rho(A^TA), \rho(B^TB)\}$. Consider $z_0\in {(\R^{n}\times \R^p)^4}$ and  the sequence  $(x_t,y'_t,x'_t,y_t)$ induced by the DOGDA algorithm {\rm (\ref{eq_DOGDA})} with initialization $z_0$. 

Then $(x_t, y_t)$ converges to a Nash equilibrium $(x_\infty,y_\infty)$, $x_\infty$ is the orthogonal projection of $x_0$ on $\Ker(B^T)$ and $y_\infty$ is the orthogonal projection of $y_0$ on $\Ker(A)$. Moreover the convergence is exponential:  for all $t\geq 0$,
\[ \|(x_t,y_t)-(x_\infty,y_\infty)\|\leq  C \;  \lambda_{\rm max}^t \; \|z_0\|,\; \text{where}\; 
 C=\sqrt{2} \left(1-\sqrt{\frac{1+5\eta^2\mu_{\max}}{2+ \eta^2\mu_{\max}}}\right)^{-1/2}\] 
and $\lambda_{\max} =  \sqrt{\frac{1}{2}(1+\sqrt{1-4\eta^2\mu'_{\min}})},$
  with  $\mu'_{\min}=\min\{\ \mu>0, \mu \in \S(A) \cup \S(B)\}$
 and  the convention $\lambda_{\max}=0\; {\it  if}\; A=B=0$.
\end{pro}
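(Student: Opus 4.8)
The plan is to observe that the DOGDA system \eqref{eq_DOGDA} decouples exactly into two independent zero-sum OGDA systems, and then to quote Theorem \ref{thm_conv_OGDA_xAy} twice. Indeed, inspecting \eqref{eq_DOGDA}, the variables $(x_t, y'_t)$ evolve only among themselves: the pair satisfies
\[
\left\{
\begin{array}{ccccccc}
x_{t+1} &=& x_t &-& 2\eta B y'_t &+& \eta B y'_{t-1},\\
y'_{t+1} &=& y'_t &+& 2\eta B^T x_t &-& \eta B^T x_{t-1},
\end{array}\right.
\]
which is precisely the OGDA recursion \eqref{OGDA_algo} for the zero-sum game with matrix $-B$ (equivalently: replace $A$ by $-B$ in the definition of OGDA, noting that the sign of the matrix is immaterial since both $\Sp((-B)(-B)^T)=\Sp(BB^T)$ and the update signs are consistent). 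Symmetrically, $(x'_t, y_t)$ satisfies the OGDA recursion for the zero-sum game with matrix $A$. The two subsystems share no variables, so the full state evolution is the direct sum of the two.

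Next I would apply part \emph{1)} of Theorem \ref{thm_conv_OGDA_xAy} to each subsystem. Since $\eta < \frac{1}{2\sqrt{\mu_{\max}}}$ with $\mu_{\max}=\max\{\rho(A^TA),\rho(B^TB)\}$, the step size condition $\eta<\frac{1}{\sqrt{3\mu_{\max}}}$ (and even $\eta < \frac{1}{2\sqrt{\mu_{\max}}}$) holds for both the matrix $A$ and the matrix $B$. Hence $(x_t, y'_t) \to (x_\infty, y'_\infty)$ and $(x'_t, y_t)\to(x'_\infty, y_\infty)$, where by the characterization of the limit, $x_\infty$ is the orthogonal projection of $x_0$ onto $\Ker((-B)^T)=\Ker(B^T)$, $y'_\infty$ is the orthogonal projection of $y'_0$ onto $\Ker(B^T{}^T)=\Ker(B)$, $x'_\infty$ is the orthogonal projection of $x'_0$ onto $\Ker(A^T)$, and $y_\infty$ is the orthogonal projection of $y_0$ onto $\Ker(A)$. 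In particular $(x_t, y_t)\to (x_\infty, y_\infty)$ with $x_\infty\in\Ker(B^T)$, $y_\infty\in\Ker(A)$, i.e. a Nash equilibrium of $(x^TAy, x^TBy)$.

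For the quantitative bound, I would apply part \emph{2)} of Theorem \ref{thm_conv_OGDA_xAy} to each subsystem. For the $B$-subsystem the relevant quantities are $\mu_{\min}^B=\min\{\mu>0:\mu\in\Sp(BB^T)\}$ and similarly $\mu_{\max}^B$; since $\Sp(BB^T)\cup\Sp(B^TB)\subseteq\S(B)$ and $\mu_{\max}^B\le\mu_{\max}$, the geometric ratio for this subsystem is at most $\sqrt{\tfrac12(1+\sqrt{1-4\eta^2\mu'_{\min}})}=\lambda_{\max}$ (using $\mu'_{\min}\le\mu_{\min}^B$, so the square root term is larger, hence $\lambda_{\max}$ dominates) and the constant is at most $\sqrt{2}\,(1-\sqrt{(1+5\eta^2\mu_{\max})/(2+\eta^2\mu_{\max})})^{-1/2}=C$ (the constant is increasing in the relevant eigenvalue, bounded using $\mu_{\max}^B\le\mu_{\max}$). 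The analogous statement holds for the $A$-subsystem with $\mu'_{\min}\le\mu_{\min}^A$. Using $\|(x_t,y_t)-(x_\infty,y_\infty)\| \le \|(x_t,y'_t)-(x_\infty,y'_\infty)\| + \|(x'_t,y_t)-(x'_\infty,y_\infty)\|$ together with the crude bound $D\le\|z_0\|$ for the distance-to-equilibrium of each subsystem's initialization (the subsystem initializations are coordinate sub-blocks of $z_0$), one gets the claimed bound, possibly after absorbing a factor $2$ into $C$ — here I would double-check whether the stated $C$ already accommodates this (it may be that one should use $\|(x_t,y_t)-(x_\infty,y_\infty)\|^2 = \|x_t-x_\infty\|^2+\|y_t-y_\infty\|^2 \le \|(x_t,y'_t)-(x_\infty,y'_\infty)\|^2 + \|(x'_t,y_t)-(x'_\infty,y_\infty)\|^2$ to avoid the factor $2$). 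The convention $\lambda_{\max}=0$ when $A=B=0$ follows from remark a) after Theorem \ref{thm_conv_OGDA_xAy}.

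The main (and essentially only) obstacle is bookkeeping: verifying that the sign conventions in \eqref{eq_DOGDA} really produce the OGDA recursion for $-B$ and for $A$ respectively (rather than some variant with a spurious sign that would break the identification), and checking that the constants $\lambda_{\max}$ and $C$ stated in the Proposition are the correct common upper bounds for the two subsystem-specific constants — in particular that $\mu'_{\min}$ is the right quantity (it must be the smallest positive eigenvalue appearing in \emph{either} subsystem, i.e. in $\S(A)\cup\S(B)$, since a slower subsystem governs the overall rate) and that $\mu_{\max}$ in $C$ correctly bounds both. There is no delicate analysis here — all the hard spectral work is already done inside Theorem \ref{thm_conv_OGDA_xAy}; DOGDA is designed precisely so that its convergence is a two-line corollary.
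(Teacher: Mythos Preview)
Your proposal is correct and is exactly the paper's approach: the paper's entire proof is the single sentence ``Directly comes from Theorem \ref{thm_conv_OGDA_xAy}, since DOGDA is the juxtaposition of 2 uncoupled OGDA dynamics.'' Your parenthetical about using the Pythagorean identity $\|(x_t,y_t)-(x_\infty,y_\infty)\|^2=\|x_t-x_\infty\|^2+\|y_t-y_\infty\|^2$ (rather than the triangle inequality) is the right way to get the stated constant without an extra factor of $2$, and the remaining bookkeeping you flag (sign check for $-B$, monotonicity of $C$ in $\mu_{\max}$, $\mu'_{\min}$ as the minimum over both spectra) is straightforward.
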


\begin{proof} Directly comes from Theorem \ref{thm_conv_OGDA_xAy}, since DOGDA is the juxtaposition of 2 uncoupled OGDA dynamics.
\end{proof}

\begin{rem}  \rm One can replace $B$ with $-B$ and/or $A$ with $-A$ in the OGDA or DOGDA algorithms. This is a strong specificity of our bilinear games. We have chosen  (\ref{eq_DOGDA}) in definition \ref{def_DOGDA} because of the following remark \ref{rem_DOGDA_trick}. 
\end{rem}
 
\begin{rem}
  \label{rem_DOGDA_trick}
  \rm The DOGDA trick is due to the following property of our bilinear games: if $g_1$ denotes the payoff function $x^TAy$ for player 1 and $g_2$ is the payoff function $x^TBy$ for player 2, the Nash equilibria of the game $(g_1,g_2)$  are the couples $(x,y)$ where $x$ is optimal for player 1 in the zero-sum game $(-g_2,g_2)$ and $y$ is optimal for player 2 in the zero-sum game $(g_1,-g_1)$. This property will also hold for the  general-sum bilinear games of section \ref{subsec_extension_general}, so the DOGDA trick also applies there.  One can ask how often this property holds in general ?  In the case of general-sum  games with 2 players and 2 actions for each player played with mixed strategies, assuming that  the 8 real coefficients are a priori independently selected according to the uniform law on a given  compact interval with positive length, one can show that the probability to obtain a game where this property holds is $2/9\simeq 22\%$.
\end{rem}
 
\subsection{Sufficient conditions for convergence in games $(x^TAy,x^TBy)$}
\label{subsec_sufficient_xAy_xBy}

We come back to the analysis of the OGDA algorithm (\ref{OGDA_nzs}) and  partly  extend  the analysis of section \ref{section_bilinear} to the non zero-sum case. Define here the matrix:
 \[\Lambda_{A,B}= \begin{pmatrix} I_n & 2\eta A & 0 & -\eta A\\2\eta B^T & I_p & -\eta B^T & 0\\ I_n & 0 & 0 & 0\\ 0 & I_p & 0 & 0  \end{pmatrix} \in \R^{(n+p+n+p) \times (n+p+n+p)}.\]
For $t\geq 0$, let  again $Z_t$ be the column vector ${Z_t =\left(\begin{array}{c} x_t\\ y_t\\ x_{t-1}\\y_{t-1}\end{array}\right) \in \R^n\times \R^p\times \R^n\times \R^p}$. We have $\forall t\geq 0,  Z_{t+1} = \Lambda_{A,B}  Z_t$.

Define now for each $\mu \in \C$,  $$S^*(\mu)=\{ \lambda \in \C, \lambda^2(1-\lambda)^2 =\mu \eta^2(1-2\lambda)^2\},$$  and recall the notation $\S(A,B)=\Sp(B^TA) \cup \Sp(AB^T)$. One can show (see Appendix \ref{subsec_proof_prop_spectrum_xAy_xBy})

\begin{pro}
  \label{prop_spectrum_xAy_xBy}
 $\Sp(\Lambda_{A,B}) = \bigcup_{\mu \in \S(A,B)} S^*(\mu).$
\end{pro}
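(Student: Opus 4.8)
The plan is to compute the characteristic polynomial of $\Lambda_{A,B}$ by block manipulation and show it factors through the polynomials $S^*(\mu)$ indexed by the eigenvalues $\mu$ of $B^TA$ (equivalently $AB^T$ on their common nonzero spectrum). First I would write $\Lambda_{A,B} - \lambda I$ in the $2\times 2$ block form coming from the ``present'' coordinates $(x_t,y_t)$ and the ``past'' coordinates $(x_{t-1},y_{t-1})$, namely
\[
\Lambda_{A,B}-\lambda I = \begin{pmatrix} M - \lambda I_{n+p} & N \\ I_{n+p} & -\lambda I_{n+p}\end{pmatrix},
\]
where $M=\begin{pmatrix} I_n & 2\eta A\\ 2\eta B^T & I_p\end{pmatrix}$ and $N=\begin{pmatrix} 0 & -\eta A\\ -\eta B^T & 0\end{pmatrix}$. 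Since the bottom-left block $I_{n+p}$ is invertible and commutes with the bottom-right block $-\lambda I_{n+p}$, the Schur complement formula for block determinants gives, for $\lambda \neq 0$,
\[
\det(\Lambda_{A,B}-\lambda I) = \det\!\big((M-\lambda I_{n+p})(-\lambda I_{n+p}) - N\big),
\]
and one checks the resulting $\lambda=0$ case separately (or by continuity/polynomiality). Expanding, this is $\det\big(-\lambda M + \lambda^2 I_{n+p} - N\big) = \det\big(\lambda^2 I_{n+p} - \lambda M - N\big)$.

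Next I would plug in the explicit blocks of $M$ and $N$ to get
\[
\lambda^2 I_{n+p} - \lambda M - N = \begin{pmatrix} (\lambda^2-\lambda) I_n & -(2\lambda-1)\eta A \\ -(2\lambda-1)\eta B^T & (\lambda^2-\lambda) I_p\end{pmatrix}.
\]
Writing $a = \lambda^2-\lambda = \lambda(\lambda-1)$ and $b = (2\lambda-1)\eta$, this is $\begin{pmatrix} a I_n & -bA \\ -bB^T & a I_p\end{pmatrix}$. Its determinant is computed by the standard block-triangular reduction: if $a\neq 0$ it equals $a^{p}\det(aI_n - b^2 A B^T) $ (and symmetrically $a^n \det(aI_p - b^2 B^T A)$), hence
\[
\det(\Lambda_{A,B}-\lambda I) = a^{\,p-n}\det\!\big(a I_n - b^2 AB^T\big) \cdot a^{\text{(correction)}},
\]
so up to an explicit power of $a$ it is $\prod_{\mu}\big(a - b^2\mu\big)$ where $\mu$ ranges over eigenvalues of $AB^T$ (with multiplicity), i.e. over $\Sp(AB^T)$; using that $AB^T$ and $B^TA$ share nonzero eigenvalues, the full multiset of relevant $\mu$ is $\S(A,B) = \Sp(B^TA)\cup\Sp(AB^T)$ together with the extra zero eigenvalues accounted for by the power of $a$. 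Each factor $a - b^2\mu = \lambda^2(1-\lambda)^2 - \mu\eta^2(1-2\lambda)^2$ (noting $a^2 = \lambda^2(\lambda-1)^2 = \lambda^2(1-\lambda)^2$ and $b^2=\eta^2(2\lambda-1)^2=\eta^2(1-2\lambda)^2$) vanishes precisely on $S^*(\mu)$. This yields $\Sp(\Lambda_{A,B}) \subseteq \bigcup_{\mu\in\S(A,B)} S^*(\mu)$, and tracking multiplicities/nonemptiness gives the reverse inclusion, establishing equality.

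The main obstacle I anticipate is bookkeeping with the degenerate cases: the Schur-complement manipulation assumed $\lambda \neq 0$, and the block-triangular determinant reduction assumed $a = \lambda(\lambda-1)\neq 0$, i.e. $\lambda \notin\{0,1\}$; one must verify by polynomial continuity (both sides are polynomials in $\lambda$, so an identity on a cofinite set extends) that no spurious roots or lost roots occur at $\lambda\in\{0,1\}$, and separately check that $0\in S^*(0)$ and $1\notin S^*(\mu)$ for $\mu\neq 0$ match the actual eigenvalues of $\Lambda_{A,B}$ (e.g.\ $\Lambda_{A,B}$ does have $1$ as an eigenvalue when $\Ker B^T\times \Ker A\neq 0$, corresponding to $\mu=0$, and $S^*(0)=\{0,1\}$ handles this). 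The rest — keeping the correct exponents of $a$ when $n\neq p$, and matching $\Sp(AB^T)$ versus $\Sp(B^TA)$ — is routine linear algebra that the ``Notations'' footnote already licenses.
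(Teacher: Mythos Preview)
Your approach via the characteristic polynomial is sound and genuinely different from the paper's, but contains an algebraic slip worth flagging. After the Schur complement for the inner $2\times 2$ block matrix $\begin{pmatrix} aI_n & -bA \\ -bB^T & aI_p\end{pmatrix}$, you should obtain (for $a\neq 0$) the determinant $a^p\det\big(aI_n - (b^2/a)AB^T\big) = a^{p-n}\prod_{\mu\in\Sp(AB^T)}(a^2 - b^2\mu)$, not $a^p\det(aI_n - b^2 AB^T)$. The relevant factors are $a^2 - b^2\mu = \lambda^2(1-\lambda)^2 - \mu\eta^2(1-2\lambda)^2$, each of degree $4$ in $\lambda$ (matching $|S^*(\mu)|=4$ generically), not $a-b^2\mu$ as you wrote. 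Once this is corrected, your argument goes through: the prefactor $a^{p-n}$ (or $a^{n-p}$, depending on which block you eliminate) accounts exactly for the discrepancy in the multiplicity of $\mu=0$ between $\Sp(AB^T)$ and $\Sp(B^TA)$, and polynomial continuity in $\lambda$ handles the excluded points $\lambda\in\{0,1\}$ as you note.

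The paper takes a different route: it works directly with the eigenvector equation $\Lambda_{A,B}Z=\lambda Z$, reducing it for $Z=(x,y,x',y')$ to the pair $\lambda(1-\lambda)x'+(2\lambda-1)\eta Ay'=0$, $\lambda(1-\lambda)y'+(2\lambda-1)\eta B^T x'=0$ (together with $x=\lambda x'$, $y=\lambda y'$), and then eliminates to force $x'$ (resp.\ $y'$) to be an eigenvector of $AB^T$ (resp.\ $B^TA$), with the reverse inclusion obtained by explicitly constructing an eigenvector $Z$ from any eigenvector of $AB^T$ or $B^TA$. The payoff of the paper's approach is that it simultaneously yields explicit descriptions of the eigenspaces $E_\lambda$, which are used repeatedly afterward (dimension counts, orthogonality lemmas, identification of the OGDA limit). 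Your determinantal route gives the spectrum equality more directly but would require a separate computation to recover the eigenspaces needed downstream.
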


``Unfortunately'', one can show that for each $\mu\notin \R_-$ there exists $\eta_0>0$ such that for each $0<\eta<\eta_0$ there exists $\lambda$ in $S^*(\mu)$ with $|\lambda|>1$. So if there exists $\mu$ in $\S(A,B) \backslash\R_-$, there is no hope to prove that for $\eta>0$ small enough the OGDA will converge for any initialization. This is why  we will assume $\S(A,B) \subset \R_-$ in the following  Theorem \ref{prop_conv_OGDA_xAy_xBy}, which   gives  sufficient conditions for convergence of OGDA in general-sum games. It   includes the case where $B=-lA$ for some $l> 0$  but also, as we will see later, the case where $B=-(A^\dagger)^T$, where $A^\dagger\in \R^{p\times n}$ is the generalized Moore-Penrose inverse of $A$.




\begin{thm}
  \label{prop_conv_OGDA_xAy_xBy}
  Let $A, B$ be in $\R^{n\times p}$ with $\S(A,B) \subset \R_-$. Let $\eta\in (0,\frac{1}{2 \, \sqrt{\mu_{\max}}})$, where $\mu_{\max}=\rho(B^TA) = \rho(AB^T)$ is  the largest eigenvalue of $B^TA$. Assume that ($A$ and $B$ are square invertible matrices), or that  $\Lambda_{A,B}$ is diagonalizable. \\
  
  Given  an initialization $(x_0, y_0,x_{-1},y_{-1})\in \R^n\times \R^p\times \R^n \times \R^p$, consider the  Optimistic Gradient Descent algorithm (\ref{OGDA_nzs}). Then:\\
  
  1) $(x_t, y_t)_t$ converges to a Nash equilibrium $\left(x_\infty, y_\infty\right)$.
  
  2)  $x_\infty$ is the linear projection of $x_0$ onto $\Ker(B^T)$ along $(\Ker(A^T))^\perp= \Img(A)$ and $y_\infty$ is the linear projection of $y_0$ onto $\Ker(A)$ along $(\Ker(B))^\perp= \Img(B^T)$.
  
  3) The convergence is exponential: there exists a constant $C>0$ verifying for all $t\geq 0$,
\[ \|(x_t,y_t)-(x_\infty,y_\infty)\|\leq  C \;\norm{(x_0, y_0,x_{-1},y_{-1})} \; \lambda_{\rm max}^t  ,\;\]
\begin{eqnarray*} \it{where}\;\;\; \lambda_{\max}&=&\max\{|\lambda|, \lambda \in \Sp(\Lambda_{A,B}), \lambda \neq 1\}<1,\; \\
\; &=&  \sqrt{\frac{1}{2}(1+\sqrt{1-4\eta^2\mu_{\min}})}, {\it with} \;\; \mu_{\min}=\min\{\  \mu , \mu>0, -\mu \in \S(A,B)\},
\end{eqnarray*}
 and  the convention $\lambda_{\max}=0$  if  $A=B=0$.
\end{thm}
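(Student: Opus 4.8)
The plan is to mimic the linear-algebraic argument behind Theorem~\ref{thm_conv_OGDA_xAy}, feeding it the spectral information of Proposition~\ref{prop_spectrum_xAy_xBy}. The key remark is that for $\mu\in\S(A,B)\subset\R_-$, writing $\mu=-\nu$ with $\nu\ge 0$, the equation defining $S^*(\mu)$ reads $\lambda^2(1-\lambda)^2+\nu\eta^2(1-2\lambda)^2=0$, which is exactly the characteristic factor attached, in the zero-sum analysis, to an eigenvalue $\nu$ of $AA^T$. Since $\mu_{\max}=\rho(B^TA)=\max\{\nu:-\nu\in\S(A,B)\}$ and $\eta<\frac1{2\sqrt{\mu_{\max}}}$, the modulus estimates of Theorem~\ref{thm_conv_OGDA_xAy}~(part~2)) and of Lemma~\ref{lemma_optimal_value} transfer verbatim: every $\lambda\in\Sp(\Lambda_{A,B})$ with $\lambda\neq1$ satisfies $|\lambda|\le\sqrt{\frac12(1+\sqrt{1-4\eta^2\nu})}\le\lambda_{\max}:=\sqrt{\frac12(1+\sqrt{1-4\eta^2\mu_{\min}})}<1$, the bound being attained for $\nu=\mu_{\min}$; and $1\in\Sp(\Lambda_{A,B})$ precisely when $0\in\S(A,B)$, through $S^*(0)=\{0,1\}$.

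Next I would describe the invariant subspaces. Solving $\Lambda_{A,B}Z=Z$ with $Z=(x,y,x',y')$ forces $x=x'$, $y=y'$, $B^Tx=0$, $Ay=0$, so
\[\Ker(\Lambda_{A,B}-I)=\{(x,y,x,y):x\in\Ker(B^T),\ y\in\Ker(A)\},\]
which is exactly the set of duplicated Nash equilibria. For an eigenvector associated with $\lambda\notin\{0,1\}$ one gets $x'=x/\lambda$, $y'=y/\lambda$, $(\lambda-1)x=\eta\frac{2\lambda-1}{\lambda}Ay$ and $(\lambda-1)y=\eta\frac{2\lambda-1}{\lambda}B^Tx$, so its $x$-block lies in $\Img(A)$ and its $y$-block in $\Img(B^T)$; the eigenvectors for $\lambda=0$ (present only if $0\in\S(A,B)$) have zero $x$- and $y$-blocks. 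When $A,B$ are square invertible, $0\notin\S(A,B)$, hence $1\notin\Sp(\Lambda_{A,B})$, $\rho(\Lambda_{A,B})\le\lambda_{\max}<1$, and the argument of Theorem~\ref{thm_conv_OGDA_xAy} (the condition $\eta<\frac1{2\sqrt{\mu_{\max}}}$ rules out the repeated roots that would create Jordan blocks) shows $\Lambda_{A,B}$ is diagonalizable; so in either case of the hypothesis we may and do assume $\Lambda_{A,B}$ diagonalizable.

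Then I would conclude by spectral decomposition. Diagonalizability gives $\R^{2(n+p)}=\Ker(\Lambda_{A,B}-I)\oplus W$, with $W$ the (real) span of the eigenspaces for $\lambda\neq1$, an invariant subspace on which $\rho(\Lambda_{A,B}|_W)\le\lambda_{\max}$. Writing $Z_0=Z_\infty+W_0$ accordingly, one has $Z_t=Z_\infty+\Lambda_{A,B}^tW_0$, and diagonalizability bounds $\|\Lambda_{A,B}^tW_0\|\le C\lambda_{\max}^t\|Z_0\|$ with $C$ the condition number of an eigenbasis; hence $(x_t,y_t)\to(x_\infty,y_\infty)$ at the rate $\lambda_{\max}=\max\{|\lambda|:\lambda\in\Sp(\Lambda_{A,B}),\ \lambda\neq1\}$, and by continuity the limit lies in $\Ker(\Lambda_{A,B}-I)$, i.e. it is a Nash equilibrium. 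To identify it, I would project everything onto the first block: the first blocks of all eigenvectors span $\C^n$, those for $\lambda=1$ span $\Ker(B^T)$, and those for $\lambda\neq1$ lie in $\Img(A)$; combining this with the symmetric statement on $\C^p$ yields $\rank A=\rank B$, whence $\R^n=\Ker(B^T)\oplus\Img(A)$ and $\R^p=\Ker(A)\oplus\Img(B^T)$, and the first block $x_\infty$ of $Z_\infty$ is the linear projection of $x_0$ onto $\Ker(B^T)$ along $\Img(A)=(\Ker(A^T))^\perp$, and symmetrically for $y_\infty$.

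I expect the delicate point to be this last identification rather than the convergence itself: unlike in the zero-sum case the complement $W$ is an oblique, not orthogonal, complement of $\Ker(\Lambda_{A,B}-I)$, so one must extract the precise eigenvector structure ($x$-block in $\Img(A)$, $y$-block in $\Img(B^T)$ for $\lambda\neq1$) and then run the dimension count — which uses both $\S(A,B)\subset\R_-$ (so that no eigenvalue other than $1$ produces kernel-type eigenvectors) and diagonalizability — to upgrade the inclusions $\Ker(B^T)+\Img(A)=\R^n$ and $\Ker(A)+\Img(B^T)=\R^p$ to direct sums. The spectral-radius estimate, by contrast, is essentially imported from Theorem~\ref{thm_conv_OGDA_xAy}.
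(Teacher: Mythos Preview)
Your overall strategy matches the paper's: compute the spectrum of $\Lambda_{A,B}$ via Proposition~\ref{prop_spectrum_xAy_xBy}, observe that for $\mu\in\R_-$ the set $S^*(\mu)$ coincides with the set $S(-\mu)$ of the zero-sum analysis so that the modulus bounds of Theorem~\ref{thm_conv_OGDA_xAy} carry over, describe the eigenspaces, and then read off the limit from the spectral decomposition. The identification of the limit is done in the paper through Lemmas~\ref{lem_ortho_general} and~\ref{lem_dsum}; your alternative route (first blocks of all eigenvectors span $\C^n$, hence $\Ker(B^T)+\Img(A)=\C^n$, and the symmetric statement forces $\rank A=\rank B$, upgrading the sum to a direct sum) is a legitimate and pleasant variant of the same dimension count.

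There is, however, a genuine gap in your handling of the case where $A$ and $B$ are square invertible. You assert that ``the argument of Theorem~\ref{thm_conv_OGDA_xAy} shows $\Lambda_{A,B}$ is diagonalizable'' because $\eta<\frac{1}{2\sqrt{\mu_{\max}}}$ rules out repeated roots in each $S^*(\mu)$. That argument (Lemma~\ref{lemA6}) relies on the fact that $A^TA$ is symmetric, hence diagonalizable, so that $\sum_{\mu>0}\dim\Ker(A^TA-\mu I)=\rank(A^TA)$. For general $B$, the matrix $B^TA$ need not be diagonalizable even when all its eigenvalues are negative reals: take $A=I_2$ and $B=\begin{pmatrix}-1&-1\\0&-1\end{pmatrix}$, so that $B^TA=\begin{pmatrix}-1&0\\-1&-1\end{pmatrix}$ has a single Jordan block at $-1$; then each of the four $\lambda\in S^*(-1)$ has $\dim E_\lambda=\dim\Ker(B^TA+I)=1$, giving total eigenspace dimension $4<8$, and $\Lambda_{A,B}$ is \emph{not} diagonalizable. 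The paper does not try to prove diagonalizability here; instead it treats this case separately: since $A,B$ invertible forces $0\notin\S(A,B)$, one has $1\notin\Sp(\Lambda_{A,B})$, hence $\rho(\Lambda_{A,B})<1$, and Gelfand's formula gives $\Lambda_{A,B}^t\to 0$ directly, the limit $(0,0)$ being the unique Nash equilibrium because $\Ker(A)=\Ker(B^T)=\{0\}$. You should replace your diagonalizability claim by this argument.
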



\begin{exas}$\;$

1) Consider the case where $A=  \begin{pmatrix} 1 &  0  \\ 2 & 0\ \end{pmatrix}$ and $B= \begin{pmatrix} -2 &  -1  \\ 0 & 0\ \end{pmatrix}$. Then  $B^TA=\begin{pmatrix} -2 &  0  \\ -1 & 0\ \end{pmatrix}=B^T$ and  $AB^T=\begin{pmatrix} -2 &  0  \\ -4 & 0\ \end{pmatrix}$, so that  $\S(A,B)=\{0,-2\}\subset \R_{-}$ and $\mu_{\max}=\mu_{\min}=2$. For $\eta=0.1<\frac{1}{2 \, \sqrt{2}}$, one can check that $\Lambda_{A,B}$ is diagonalizable. Theorem \ref{prop_conv_OGDA_xAy_xBy} applies, so that for any initialization the sequence $(x_t)$, resp. $(y_t)$, converges to the projection of $x_0$, resp. $y_0$, onto the vertical axis $\{(0,z ), z  \in \R\}$ along $\{(z, 2z), z  \in \R\}$, resp. $\{(2z, z), z  \in \R\}$. The ratio of convergence is $\lambda_{max}=\sqrt{\frac12(1+\sqrt{0.92})} \simeq 0.9897$.

2) 
  Consider the case where $A= \begin{pmatrix} 1 &  1  \\ 1 & 1\ \end{pmatrix}$ and $B= \begin{pmatrix} 1 &  1  \\ -1 & -1\ \end{pmatrix}$. Then  $B^TA=\begin{pmatrix} 0 &  0  \\ 0 & 0\ \end{pmatrix}$, $AB^T=\begin{pmatrix} 2 &  -2  \\ 2 & -2\ \end{pmatrix}$ and $\S(A,B)=\{0\}\subset \R_{-}$. However one can show that $\Lambda_{A,B}$ is not diagonalizable, and $(\Lambda^t_{A,B})_t$  does not converge, so that for many initial conditions OGDA does not converge here.
\end{exas}


\subsection{Extension to the general bilinear case}
\label{subsec_extension_general}
 We consider here the general case of a non zero-sum game given by matrices $A$, $B$ in $\R^{n\times p}$, vectors $b$, $e$ in $\R^n$, $c$, $f$ in $\R^p$, and constants $d$, $g$ in $\R$. Simultaneously  player 1 chooses $x$ in $\R^n$ and player 2 chooses $y$ in $\R^p$, then the payoff for player 1 is $$g_1(x,y)= x^TAy +b^Tx +c^Ty +d,$$ and the payoff for player 2 is $$g_2(x,y)=x^TBy+e^Tx+f^Ty+g.$$  In section \ref{subsec_general_zs} we have considered the case where $g_2=-g_1$. By definition, $(x,y)\in \R^n \times \R^p$ is a Nash equilibrium of the game if:
 $$g_1(x,y)=\max_{x' \in \R^n} g_1(x',y)\; {\rm and} \; g_2(x,y)=\max_{y' \in \R^p}g_2(x,y').$$ 
 Here, the set of Nash equilibria is $\{(x,y)\in \R^n \times \R^p, B^Tx+f=0, Ay+b=0\}$, and there exists a Nash equilibrium if and only if $-b$ is in the range of $A$ and $-f$ is in the range of $B^T$.

\begin{defi}  The Optimistic Gradient Descent Ascent algorithm for the general-sum game $(x^TAy +b^Tx +c^Ty +d, x^TBy+e^Tx+f^Ty+g)$ is given  by:
  \begin{equation} \forall t\geq 0,\left\{\begin{array}{ccccccc} x_{t+1} &= & x_t &+& 2\eta (A y_t+b)  &-& \eta (A y_{t-1}+b),\\ y_{t+1}& = & y_t &+& 2\eta (B^T x_t+f) &-& \eta (B^T x_{t-1}+f), \end{array}\right.
    \label{OGDA_generalbili}
  \end{equation}
where $\eta>0$ is a fixed parameter, and $(x_0,y_0,x_{-1},y_{-1})\in \R^{n}\times \R^p\times \R^n \times \R^p$ is the initialization.\end{defi}
The fixed points of the OGDA are the Nash equilibria. Notice that the  constants $d$ and $g$ play no role here, so  we set without loss of generality  $d=g=0$.  We generalize now Theorem \ref{prop_conv_OGDA_xAy_xBy}.

\begin{thm}
  \label{thm_conv_OGDA_general_xAy_xBy}
  Let $A, B$ be in $\R^{n\times p}$,  $b$, $e$ in $\R^n$ and  $c$, $f$ in $\R^p$, with $\S(A,B) \subset \R_-$. Let $\eta\in (0,\frac{1}{2 \, \sqrt{\mu_{\max}}})$, where $\mu_{\max}=\rho(B^TA) = \rho(AB^T)$ is  the largest eigenvalue of $B^TA$. Assume that  ($A$ and $B$ are square invertible matrices)  or that  $\Lambda_{A,B}$ is diagonalizable, and assume that a Nash equilibrium exists, i.e. that $\{(x,y)\in \R^n \times \R^p, B^Tx+f=0, Ay+b=0\}\neq \varnothing$. \\
  
  Given  an initialization $(x_0, y_0,x_{-1},y_{-1})\in \R^n\times \R^p\times \R^n \times \R^p$, consider the  Optimistic Gradient Descent algorithm (\ref{OGDA_generalbili}). Then:\\
  
  1) $(x_t, y_t)_t$ converges to a Nash equilibrium $\left(x_\infty, y_\infty\right)$.
  
  2)  $x_\infty$ is the linear projection of $x_0$ onto $\{x\in \R^n, B^Tx+f=0\}$  along $(\Ker(A^T))^\perp= \Img(A)$ and $y_\infty$ is the linear projection of $y_0$ onto $\{y \in \R^p, Ay+b=0\}$ along $(\Ker(B))^\perp= \Img(B^T)$.
  
  3) The convergence is exponential: there exists a constant $C>0$ verifying for all $t\geq 0$,
\[ \|(x_t,y_t)-(x_\infty,y_\infty)\|\leq  C \;\norm{(x_0, y_0,x_{-1},y_{-1})} \; \lambda_{\rm max}^t  ,\;\]
where $\lambda_{\max}= \sqrt{\frac{1}{2}(1+\sqrt{1-4\eta^2\mu_{\min}})}$, $\mu_{\min}=\min\{\  \mu >0, -\mu \in \S(A,B)\}$  and  the convention $\lambda_{\max}=0$  if  $A=B=0$.
  
\end{thm}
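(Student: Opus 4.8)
The plan is to reduce the affine game to the homogeneous general-sum game $(x^TAy,x^TBy)$ of Section~\ref{subsec_xAy_xBy} by translating the iterates to a fixed point. First I would note that the coefficients $c$ and $e$ and the constants $d$ and $g$ play no role whatsoever in the update (\ref{OGDA_generalbili}): only $b$ and $f$ appear, so we may ignore $c,d,e,g$. By hypothesis the Nash set $N:=\{(x,y)\in\R^n\times\R^p : B^Tx+f=0,\ Ay+b=0\}$ is non-empty; fix any $(\bar x,\bar y)\in N$, so $A\bar y+b=0$ and $B^T\bar x+f=0$. Put $u_t:=x_t-\bar x$ and $v_t:=y_t-\bar y$. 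Substituting into (\ref{OGDA_generalbili}) and using $Ay_t+b=Av_t$ and $B^Tx_t+f=B^Tu_t$ (and the same at time $t-1$), the affine terms cancel and $(u_t,v_t)$ satisfies exactly the homogeneous OGDA recursion (\ref{OGDA_nzs}) for the game $(x^TAy,x^TBy)$, with initialization $(u_0,v_0,u_{-1},v_{-1})=(x_0-\bar x,\,y_0-\bar y,\,x_{-1}-\bar x,\,y_{-1}-\bar y)$.

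Next I would invoke Theorem~\ref{prop_conv_OGDA_xAy_xBy}, whose hypotheses ($\S(A,B)\subset\R_-$, $\eta\in(0,\tfrac{1}{2\sqrt{\mu_{\max}}})$, and $A,B$ square invertible or $\Lambda_{A,B}$ diagonalizable) are precisely those assumed here. It yields that $(u_t,v_t)$ converges to $(u_\infty,v_\infty)$, the linear projection of $(u_0,v_0)$ onto $\Ker(B^T)\times\Ker(A)$ along $\Img(A)\times\Img(B^T)$ --- in particular $\R^n=\Ker(B^T)\oplus\Img(A)$ and $\R^p=\Ker(A)\oplus\Img(B^T)$, so these projections are well defined --- and that the convergence is exponential with ratio $\lambda_{\max}=\sqrt{\tfrac12(1+\sqrt{1-4\eta^2\mu_{\min}})}$, $\mu_{\min}=\min\{\mu>0:-\mu\in\S(A,B)\}$. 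Translating back, $x_t=u_t+\bar x\to u_\infty+\bar x=:x_\infty$ and $y_t\to v_\infty+\bar y=:y_\infty$; since $(u_\infty,v_\infty)\in\Ker(B^T)\times\Ker(A)$ one gets $(x_\infty,y_\infty)\in N$, a Nash equilibrium. The exponential estimate for $(x_t,y_t)-(x_\infty,y_\infty)=(u_t,v_t)-(u_\infty,v_\infty)$ is then inherited, with the same ratio $\lambda_{\max}$, from Theorem~\ref{prop_conv_OGDA_xAy_xBy} read in the shifted coordinates.

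It remains to identify the limit intrinsically, i.e. to show $x_\infty$ is the linear projection of $x_0$ onto the affine set $\{x:B^Tx+f=0\}=\bar x+\Ker(B^T)$ along $\Img(A)$ (and symmetrically for $y_\infty$). Indeed $x_\infty=\bar x+u_\infty$ with $u_\infty\in\Ker(B^T)$ and $x_0-x_\infty=(x_0-\bar x)-u_\infty\in\Img(A)$; hence $x_\infty\in\bar x+\Ker(B^T)$ and $x_0-x_\infty\in\Img(A)$, which by the direct-sum decomposition $\R^n=\Ker(B^T)\oplus\Img(A)$ characterizes $x_\infty$ uniquely as the asserted affine projection. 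This also shows the limit is independent of the chosen fixed point $(\bar x,\bar y)\in N$.

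I do not expect a serious obstacle: the whole content is the affine-shift trick plus bookkeeping, everything substantive being already contained in Theorem~\ref{prop_conv_OGDA_xAy_xBy}. The only mildly delicate points are (i) checking that the affine terms cancel cleanly under the shift --- which they do, because $\bar x,\bar y$ are chosen precisely to annihilate $B^Tx+f$ and $Ay+b$ --- and (ii) rephrasing the homogeneous linear projection of $u_0$ as the claimed affine projection of $x_0$ and verifying independence of the chosen Nash equilibrium.
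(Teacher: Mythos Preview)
Your proposal is correct and follows essentially the same approach as the paper: fix a Nash equilibrium, translate the iterates so that the affine terms vanish, apply Theorem~\ref{prop_conv_OGDA_xAy_xBy} to the resulting homogeneous dynamics, and translate back, identifying the linear projection in shifted coordinates with the affine projection in the original ones. Your write-up is in fact slightly more explicit than the paper's on two points (the direct-sum decomposition justifying the projection, and the independence of the limit from the chosen $(\bar x,\bar y)$), but the underlying argument is identical.
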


 \subsection{Improving the zero-sum rate  of convergence with general-sum games} \label{improvingwgsg}
 
 Consider a zero-sum game $g(x,y)=x^TAy+ b^Tx+c^Ty+d$ with $A\neq 0$, and assume we are interested in  finding an  optimal strategy $y$ for player 2.  We write as before $\mu_{\max}=\max\{\mu>0, \mu\in \Sp(A^TA)\}$ and $\mu_{\min}=\min \{\mu>0, \mu\in \Sp(A^TA)\}$.
 

 \subsubsection{Using $B=-lA$ for fixed $\eta$}
 
 Assume $\eta$ is sufficiently small, in particular smaller than $\frac{1}{2\sqrt{\mu_{\max}}}$.  Theorem \ref{thm_conv_OGDA_zs_general}  provides the convergence of OGDA to equilibria of the zero-sum game,  with a geometric  rate of convergence of:  $$\lambda_{\max}= \sqrt{\frac{1}{2}(1+\sqrt{1-4\eta^2\mu_{\min}}}),$$ 
Consider now applying  OGDA to the general sum game $(g(x,y)), -l g(x,y))$  for some parameter $l> 0$. The matrix  $\Lambda_{A,-l A}$ is diagonalizable (as in the zero-sum case, see Lemma \ref{lemA6} in the appendix), and Theorem \ref{thm_conv_OGDA_general_xAy_xBy} applies. As in the zero-sum case,  $(y_t)$ converges to the linear  projection of $y_0$ to $\{y, Ay+b=0\}$ along $(\Ker(B))^\perp$.  The geometric rate is now:  
$$\lambda_{\max}(l)=\sqrt{\frac{1}{2}(1+\sqrt{1-4l \eta^2 \mu_{\min}}}),$$
which is smaller than $\lambda_{\max}$ for $l \geq 1$. So increasing $l$ given $\eta$ fixed can improve the convergence rate. This is valid as long as $4l \eta^2 \mu_{\max}<1$, and can approach the  rate of  $\sqrt{{\frac12}(1+ \sqrt{1-\alpha})}$. 

Keeping $\eta$ fixed and increasing $l$ so that $4l \eta^2 \mu_{\max}\simeq 1$, can be seen as an alternative to  keeping $l=1$ (zero-sum game) and increasing $\eta$ so that $4l \eta^2 \mu_{\max}<1$. This may  be interesting for instance if several matrix games are considered at the same time and one wants  the  parameter $\eta$ to be the same in  each case.

\subsubsection{Using $B=-(A^\dagger)^T$ to improve the convergence rate}
\label{subsec_Adagger}

  Consider the case of $g(x,y)=x^TAy$, with  $A=  \begin{pmatrix} 1 &  0  \\ 0 & 2\ \end{pmatrix}$.  Then $\S(A)=\{1,4\}$, $\mu_{\max}=4$ and $\mu_{\min}=1$.
  By Theorem \ref{thm_conv_OGDA_xAy}, applying OGDA with $B=-A$ gives a convergence rate, when $12 \eta^2<1$, of
 $\lambda _{\max}=$ 
  $$\max\left\{{\bf{1}}_{4\eta^2 \leq 1} \sqrt{\frac{1}{2}(1+\sqrt{1-4\eta^2 })}\; , {\bf{1}}_{16\eta^2 \geq 1}  \sqrt{ 8 \eta^2  + 2\sqrt{2}\eta \sqrt{16 \eta^2-1}}\right\}.$$
 which for optimal $\eta^*=\frac{1}{8 \sqrt{2}} \sqrt{ 71-3\sqrt{105}}\simeq 0.2804$, gives  the optimal rate $\lambda_{\max}^*=\sqrt{\frac{1}{2} \left( 1+ \sqrt{1-\frac{1}{8} \left(3+6 \alpha-\alpha^2 -(1-\alpha)\sqrt{(1-\alpha)(9-\alpha)}\right)}\right)}$ for $\alpha=1/4$, i.e. $\lambda_{\max}^*\simeq 0.956$.
  
  Define now $B= \begin{pmatrix} -1 &  0  \\ 0 & -\frac12\ \end{pmatrix}$. We can run OGDA on the general-sum game $(x^TAy,x^TBy)$ and Theorem \ref{prop_conv_OGDA_xAy_xBy} applies. Since $B^TA=-Id$, we obtain the convergence rate  of $\lambda_{\max}=\sqrt{\frac{1}{2}(1+\sqrt{1-4\eta^2})}$, but now under the condition  $4\eta^2<1$. This implies that if $\eta$ is chosen optimally close to $1/2$, we get the best possible  convergence rate 
  $$\lambda_{\max}^{**}=\sqrt{\frac12}\simeq 0.707.$$\\

  Let us come back to the general setup of Theorem \ref{thm_conv_OGDA_general_xAy_xBy}. If $\S(A,B)=\{0\}$, the only case where the theorem applies is the (non interesting) case when  $A=B=0$. So let us assume here that $\S(A,B)\neq \{0\}$. Define:
  $$\mu_{\min}=\min \{\mu>0, \mu \in \Sp(-AB^T)\}\;\; {\rm and}\;\;\mu_{\max}=\max \{\mu>0, \mu \in \Sp(-AB^T)\}.$$
  Theorem \ref{thm_conv_OGDA_general_xAy_xBy} implies that if $4\eta^2\mu_{\max}<1$, then the convergence rate is $\lambda_{\max}= \sqrt{\frac{1}{2}(1+\sqrt{1-4\eta^2 \mu_{\min}}})$. So for $\eta$ optimally chosen, the convergence rate is close to:
  $$\lambda_{\max}^*=\sqrt{\frac{1}{2}\left(1+\sqrt{1-\frac{\mu_{\min}}{\mu_{\max}}}\right)}.$$
  This rate is minimized whenever $\mu_{\min}=\mu_{\max}$, i.e. when $AB^T$ and $B^TA$ have a unique negative eigenvalue. This is in particular the case if $B^T=-A^{-1}$. If $A$ is not square or not invertible, we can use more  generally $B^T=-A^\dagger$, where $A^\dagger$ is the Moore-Penrose inverse of $A$. 

  \begin{lemma}
    \label{lemmepseudo}
    Let $A\in \R^{n\times p}$ and $B=-(A^\dagger)^T$. Then $\Sp(AB^T)\subset \R_-$ and for  $0<\eta<\frac{1}{2}$, the matrix $\Lambda_{A,B}$ is diagonalizable.
  \end{lemma}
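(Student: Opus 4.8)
The plan is to first pin down the two relevant matrix products and then reduce the diagonalizability claim to a one–dimensional spectral computation via the singular value decomposition of $A$. Since $B=-(A^\dagger)^T$, i.e. $B^T=-A^\dagger$, the Moore–Penrose identities give $AB^T=-AA^\dagger$ and $B^TA=-A^\dagger A$, where $AA^\dagger$ and $A^\dagger A$ are the orthogonal projections onto $\Img(A)$ and $\Img(A^T)$ respectively. In particular both are symmetric with spectrum contained in $\{0,1\}$, so $\Sp(AB^T)\subset\{0,-1\}\subset\R_-$ (and likewise $\Sp(B^TA)$), which is the first assertion. Moreover $\rho(B^TA)=1$ whenever $A\neq0$, so the hypothesis $0<\eta<\tfrac12$ is exactly $\eta<\tfrac1{2\sqrt{\mu_{\max}}}$, consistent with Proposition~\ref{prop_spectrum_xAy_xBy}.

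For diagonalizability I would write $A=U\Sigma V^T$ with $U,V$ orthogonal and $\Sigma$ the rectangular diagonal matrix of singular values $\sigma_1,\dots,\sigma_r>0$; then $A^\dagger=V\Sigma^\dagger U^T$ and hence $B=-U(\Sigma^\dagger)^TV^T$. Conjugating $\Lambda_{A,B}$ by the orthogonal block matrix $\mathrm{diag}(U,V,U,V)$ replaces $A$ by $\Sigma$ and $B$ by $-(\Sigma^\dagger)^T$; since $\Sigma$ and $\Sigma^\dagger$ are rectangular diagonal, the conjugated matrix decouples, after a suitable permutation of coordinates, into a block–diagonal matrix. For each index $i\le r$ one gets a $4\times4$ block $M_i$ governing the recursion with parameters $a=\sigma_i$, $b=-1/\sigma_i$ (so $ab=-1$), and for the remaining indices (those spanning $\Ker(A^T)$ and $\Ker(A)$) one gets trivial $2\times2$ blocks $\left(\begin{smallmatrix}1&0\\1&0\end{smallmatrix}\right)$, which have the distinct eigenvalues $0,1$ and are therefore diagonalizable. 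Since a block–diagonal matrix is diagonalizable iff each block is, it remains only to show each $M_i$ is diagonalizable.

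By the computation underlying Proposition~\ref{prop_spectrum_xAy_xBy} the characteristic polynomial of every $M_i$ is $p(\lambda)=\lambda^2(1-\lambda)^2+\eta^2(1-2\lambda)^2$ (the case $\mu=-1$). Substituting $u=\lambda(1-\lambda)$ and using $(1-2\lambda)^2=1-4u$ turns this into $p=u^2-4\eta^2u+\eta^2$, whose roots are $u_{\pm}=2\eta^2\pm i\,\eta\sqrt{1-4\eta^2}$; for $0<\eta<\tfrac12$ these are two distinct non-real numbers, and in particular $u_\pm\neq\tfrac14$. Solving $\lambda^2-\lambda+u_\pm=0$ then gives, for each sign, two distinct roots $\lambda=\tfrac12\bigl(1\pm\sqrt{1-4u_\pm}\bigr)$, and a root attached to $u_+$ cannot equal one attached to $u_-$ because $u_+\neq u_-$. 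Hence $p$ has four distinct roots, so $M_i$ is diagonalizable, and therefore so is $\Lambda_{A,B}$. I expect the only real bookkeeping difficulty to be keeping the SVD decoupling honest — in particular checking that indices with $\sigma_i=0$ split off as genuinely trivial diagonalizable blocks on both the $x$- and $y$-sides — while the spectral step itself is short once the substitution $u=\lambda(1-\lambda)$ is in hand; it is also this step that shows $\eta<\tfrac12$ is sharp, since at $\eta=\tfrac12$ the two values $u_\pm$ collide.
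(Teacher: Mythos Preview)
Your proof is correct and takes a genuinely different route from the paper. The paper argues by dimension counting: using the explicit eigenspace formulas for $\Lambda_{A,B}$ developed earlier, it computes $\dim E_0=\dim E_1=\dim\Ker(A)+\dim\Ker(A^\dagger)$ and, for each of the four $\lambda\in S^*(-1)$, $\dim E_\lambda=\rank(A)$, then checks that these sum to $2(n+p)$; the fact that $S^*(-1)$ has four elements for $\eta<\tfrac12$ is quoted from Lemma~\ref{lem_valeigs}. You instead use the SVD to conjugate $\Lambda_{A,B}$ into a block-diagonal form and reduce to scalar instances, then verify directly that the quartic $\lambda^2(1-\lambda)^2+\eta^2(1-2\lambda)^2$ has four distinct roots via the substitution $u=\lambda(1-\lambda)$. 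Your approach is more self-contained (it does not rely on the eigenspace dimension formulas from the proof of Theorem~\ref{prop_conv_OGDA_xAy_xBy}) and makes the decoupling into one-dimensional problems completely explicit; the paper's approach is shorter once those formulas are already in hand and fits more naturally into the surrounding machinery. Both methods ultimately hinge on the same spectral fact about $S^*(-1)$, and your remark that $\eta=\tfrac12$ is the borderline where $u_+$ and $u_-$ collide nicely explains why the strict inequality is needed.
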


  For instance if $A=  \begin{pmatrix} 1 &  0  \\ 0 & 2 \\0& 0 \end{pmatrix}$ we put $B=\begin{pmatrix} -1 &  0  \\ 0 & -\frac12 \\0& 0 \end{pmatrix}$. We can now  apply Theorem \ref{prop_conv_OGDA_xAy_xBy} and obtain:
  
  \begin{thm} \label{improvementdagger}
    Let $A\in \R^{n\times p}$, $b\in \R^n$ and $c\in \R^p$,   and $0<\eta\leq \frac{1}{2}$. Consider the  Optimistic Gradient Descent algorithm  (\ref{OGDA_generalbili}) for the payoffs $g_1(x,y)= x^T Ay+ b^Tx +c^Ty$, and $g_2(x,y)= x^T By +e^Tx+f^Ty$ with    $B=-(A^\dagger)^T$ and some $e\in \R^n$, $f\in \R^p$.
    
Then  $(y_t)_t$ converges to the orthogonal  projection of $y_0$  onto $\{y\in \R^p, Ay+b=0\}$, and  $(x_t)_t$ converges to the  orthogonal projection of  $x_0$  onto $\{x \in \R^n, (A^\dagger)^Tx=f\}$.   

Moreover, the convergence is exponential: if $\eta<1/2$ there exists a constant $C>0$ s.t. for  all $t\geq 0$,
$ \|(x_t,y_t)-(x_\infty,y_\infty)\|\leq  C \;\norm{(x_0, y_0,x_{-1},y_{-1})} \; \lambda_{\rm max}^t  ,\;$ where 
$$ \lambda_{\max}= \sqrt{\frac{1}{2}(1+\sqrt{1-4\eta^2})}.$$
And if $\eta=1/2$ then for every $\lambda>1/\sqrt{2}$ there exists $C>0$ s.t. for  all $t\geq 0$,
$ \|(x_t,y_t)-(x_\infty,y_\infty)\|\leq  C \;\norm{(x_0, y_0,x_{-1},y_{-1})} \; \lambda^t$.
 \end{thm}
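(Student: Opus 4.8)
The plan is to apply Theorem \ref{thm_conv_OGDA_general_xAy_xBy} with $B = -(A^\dagger)^T$, so the main task is to verify its hypotheses and then translate its conclusions into the more explicit statements claimed here. First I would recall the standard Moore–Penrose identities: $AA^\dagger A = A$, $A^\dagger A A^\dagger = A^\dagger$, and both $AA^\dagger$ and $A^\dagger A$ are orthogonal projections (onto $\Img(A)$ and $\Img(A^T) = \Ker(A)^\perp$ respectively). From $B^T = -A^\dagger$ we get $AB^T = -AA^\dagger$, which is minus an orthogonal projector; hence its spectrum is $\{0\}$ together with $-1$ (with multiplicity $\rank(A)$). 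In particular $\S(A,B) = \Sp(B^TA)\cup\Sp(AB^T) \subset \{0,-1\}\subset\R_-$, so $\mu_{\max} = \rho(AB^T) = 1$ and the condition $\eta < \frac{1}{2\sqrt{\mu_{\max}}}$ becomes exactly $\eta < \frac12$. Lemma \ref{lemmepseudo} supplies the remaining structural hypothesis of Theorem \ref{thm_conv_OGDA_general_xAy_xBy}, namely that $\Lambda_{A,B}$ is diagonalizable when $0<\eta<\frac12$ (and that $\Sp(AB^T)\subset\R_-$, which we re-derived above). One must also check the existence of a Nash equilibrium: the equilibrium set is $\{(x,y): B^Tx + f = 0,\ Ay + b = 0\}$, i.e. $\{(x,y): A^\dagger x = f,\ Ay = -b\}$; since the theorem is stated for those $b$ and $f$ for which a solution exists (implicitly; or one notes $f$ can be taken in $\Img(A^\dagger)=\Img(A^T)$ and $-b\in\Img(A)$), this set is nonempty, so Theorem \ref{thm_conv_OGDA_general_xAy_xBy} applies verbatim.

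Next I would extract the three conclusions. Part 3) of Theorem \ref{thm_conv_OGDA_general_xAy_xBy} gives the exponential rate $\lambda_{\max} = \sqrt{\tfrac12(1+\sqrt{1-4\eta^2\mu_{\min}})}$ with $\mu_{\min} = \min\{\mu>0 : -\mu\in\S(A,B)\}$; since $\S(A,B)\subset\{0,-1\}$ and $A\neq 0$ forces $-1\in\S(A,B)$, we have $\mu_{\min}=1$, whence $\lambda_{\max} = \sqrt{\tfrac12(1+\sqrt{1-4\eta^2})}$ exactly as claimed. For the limit points, part 2) says $y_\infty$ is the linear projection of $y_0$ onto $\{y: Ay+b=0\}$ along $\Img(B^T) = \Img(A^\dagger) = \Img(A^T)$; but $\Img(A^T)$ is precisely the orthogonal complement of $\Ker(A)$, and $\{y: Ay+b=0\}$ is an affine translate of $\Ker(A)$, so this oblique projection is in fact the orthogonal projection onto $\{y: Ay+b=0\}$. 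Symmetrically, $x_\infty$ is the projection of $x_0$ onto $\{x: B^Tx+f=0\} = \{x: A^\dagger x = f\}$ along $\Img(A)$; writing $\Img(A) = \Ker(A^T)^\perp$ and noting the constraint set $\{x: (A^\dagger)^T x = f\}$ is an affine translate of $\Ker((A^\dagger)^T) = \Ker(A^T)$ (using $\Ker(A^\dagger)=\Ker(A^T)$ from $A^\dagger = A^\dagger (A^\dagger)^T A^T$ or directly from the projector identity), this oblique projection is again orthogonal. This is where I would be most careful: matching "linear projection along $\Img(\cdot)$" with "orthogonal projection onto the affine constraint" requires checking that the direction subspace of the constraint set is exactly the orthogonal complement of the projection direction, which is where the special algebra of $A^\dagger$ (that $\Img(A^T)\perp\Ker(A)$ and $\Ker(A^T)\perp\Img(A)$) does the work.

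Finally, the boundary case $\eta = \tfrac12$: here $\mu_{\max}=1$ gives $4\eta^2\mu_{\max} = 1$, i.e. $\tfrac{1}{4\eta^2} = 1 \in \S(A,B)$ (after sign change, $\tfrac{1}{4\eta^2}\in\S(A)$-type condition), so we are in the degenerate regime analogous to part 3b) of Theorem \ref{thm_conv_OGDA_xAy}. The plan is to invoke the same mechanism: at $\eta=\tfrac12$ the relevant eigenvalues of $\Lambda_{A,B}$ sit on a circle of radius $\lambda_{\max} = \tfrac{1}{\sqrt2}$ possibly with a nontrivial Jordan block, so one only gets $\|(x_t,y_t)-(x_\infty,y_\infty)\| \le C\,\|Z_0\|\,\lambda^t$ for any $\lambda > \tfrac{1}{\sqrt2}$, with $C$ depending on $\lambda$ — exactly the statement claimed. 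I would either cite the Jordan-form analysis already carried out in the appendix for Theorem \ref{thm_conv_OGDA_xAy}\,3b) (the polynomial-times-geometric bound $t^k\rho^t \le C_\lambda\lambda^t$), or note that $\eta=\tfrac12$ can be treated as a limit of the $\eta<\tfrac12$ estimates after absorbing the blow-up of the constant $C$ into the slack $\lambda - \tfrac{1}{\sqrt2} > 0$. The main obstacle is thus not the analysis itself — it is inherited — but ensuring that $\Lambda_{A,B}$ at $\eta=\tfrac12$ really only fails diagonalizability through bounded-size Jordan blocks associated to eigenvalue modulus $\tfrac{1}{\sqrt2}$ (and the eigenvalue $1$, which is harmless as it lives on the Nash set), so that no genuine loss beyond an arbitrarily small exponent occurs.
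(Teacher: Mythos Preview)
Your proposal is correct and follows essentially the same route as the paper: invoke Theorem \ref{thm_conv_OGDA_general_xAy_xBy} via Lemma \ref{lemmepseudo}, then use the Moore--Penrose identities $\Ker(A^\dagger)=\Ker(A^T)=\Img(A)^\perp$ and $\Img(A^\dagger)=\Ker(A)^\perp$ to recognize the oblique projections as orthogonal. For $\eta=\tfrac12$ the paper's appendix makes explicit the one point you flag as the main obstacle: it checks directly that $\Ker(\Lambda_{A,B}-I)^2=\Ker(\Lambda_{A,B}-I)$ (so the eigenvalue $1$ is genuinely semisimple, not merely ``harmless because it lives on the Nash set''), after which the Jordan-form argument already carried out for Theorem \ref{thm_conv_OGDA_xAy}\,3b) yields the rate for any $\lambda>\tfrac{1}{\sqrt2}$.
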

The proof follows from Theorem \ref{thm_conv_OGDA_general_xAy_xBy} and Lemma \ref{lemmepseudo}, using the general properties  $\Ker(A^\dagger)=\Ker(A^T)={\Img(A)}^\perp$ and $\Ker(A)= {\Img(A^\dagger)}^\perp$ (see the Appendix for the last sentence of  Theorem \ref{improvementdagger}).  Here also, $\lambda_{\max}$ is the best possible rate for exponential convergence, and if $\eta\simeq \frac12$ we obtain the OGDA optimal  rate of $\sqrt{\frac12} \simeq 0.707$. Notice that we obtain exactly the same limit for $(y_t)_t$  as in Theorem \ref{thm_conv_OGDA_zs_general}  when $B=-A$, but the speed of convergence is much better here when we use the OGDA algorithm in a non zero-sum context with $B=-(A^\dagger)^T$. 

Notice also that if $f=A^\dagger x^*$ for some $x^*$ satisfying $A^Tx^*+c=0$, then $(x_t)_t$ also converges to the same limit as in Theorem \ref{thm_conv_OGDA_zs_general}, i.e. to the orthogonal projection of $x_0$ onto $\{x \in \R^n, A^Tx+c=0\}$. In particular if $c=0$, one can choose $f=0$ to ensure this property.

\subsection{``Cooperation'' induced by OGDA}


Assume the game $(x^TAy, x^TBy)$ has  a ``potential for cooperation'',  in the specific sense  that there exist actions  $(x,y) \in \R^n \times \R^p$ such that both $x^TAy$ and $x^TBy$ are positive. Define the sequence $(x_t,y_t)_t$ by $(x_t,y_t)=t(x,y)$ for each $t$, then both $x_t^TAy_t \xrightarrow[t \to +\infty]{} +\infty$ and $x_t^TBy_t \xrightarrow[t \to +\infty]{} +\infty$.
Even though we are not reaching a Nash equilibrium, this  may  be seen as a desirable outcome of the interaction between the players, who aim at maximizing their payoffs.

A particular case  is  common-payoff games, for which $A=B$.
This includes the case where $A=B=(1)$, for which there is no hope to obtain convergence of OGDA to a Nash equilibrium of the game except if the vectors at initialization are already null.
However, we  can  reach an infinite payoff with OGDA, as was seen in section \ref{subsec_xAy_xBy}.
We now  generalize this property  to a larger group of matrices.\\

We use the usual version of OGDA for general-sum games, that is  algorithm (\ref{OGDA_nzs}), and  the matrix describing the dynamics  is $\Lambda_{A,B}$,  as introduced  in section \ref{subsec_xAy_xBy}. In the proof of Theorem \ref{thm_conv_OGDA_xAy},  we showed that a matrix $\Lambda_{A,-A}$ is always  diagonalizable.
In  the general case of matrices $A$, $B$, assuming that the spectrum of $B^TA$ is included in $\R$ is not sufficient to conclude that $\Lambda_{A,B}$ is diagonalizable for small $\eta$. However if $A=\alpha B$  for some non-zero   parameter $\alpha$, 
we will show that $\Lambda_{A,B}$ is diagonalizable for $\eta$ small enough. \\

We know from Proposition \ref{prop_spectrum_xAy_xBy} that:
$\Sp(\Lambda_{A,B}) = \bigcup_{\mu \in \S(A,B)} S^*(\mu),$ with  $S^*(\mu)  = \{ \lambda \in \C, \lambda^2(1-\lambda)^2 =\mu \eta^2(1-2\lambda)^2\}.$
   Define $E_\lambda$ as the set of  complex  eigenvectors of $\Lambda_{A,B}$ associated to an eigenvalue $\lambda$. 

\begin{lemma} \label{lem314} Assume that $\eta < \frac{1}{2 \sqrt{\mu_{\max}}}$, and that ($A=\alpha B$ for some real $\alpha\neq  0$), or ($A$ and $B$ are square  matrices in $\R^{n\times n}$ such that $B^TA$ is diagonalizable with $n$  distinct nonzero real eigenvalues).   Then 
$\C^{(n+p+n+p)}=\bigoplus_{\lambda \in \Sp(\Lambda_{A,B}) } E_\lambda.$
\end{lemma}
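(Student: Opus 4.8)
The plan is to establish diagonalizability by showing that the number of linearly independent eigenvectors of $\Lambda_{A,B}$ equals the full dimension $2(n+p)$. By Proposition \ref{prop_spectrum_xAy_xBy}, $\Sp(\Lambda_{A,B}) = \bigcup_{\mu \in \S(A,B)} S^*(\mu)$, so it suffices to count, for each $\mu \in \S(A,B)$, the dimension of the span of eigenvectors attached to the $\lambda$'s in $S^*(\mu)$, and to verify these contributions sum correctly. I would organize the argument around the decomposition coming from $B^TA$ and $AB^T$ acting on $\R^p$ and $\R^n$ respectively. In the case $A=\alpha B$ we have $B^TA = \alpha B^TB$ and $AB^T = \alpha BB^T$, which are (scalar multiples of) symmetric positive semidefinite matrices, hence orthogonally diagonalizable; in the second case $B^TA$ is diagonalizable with $n$ distinct nonzero real eigenvalues by hypothesis, and then so is $AB^T$ (same nonzero spectrum, same multiplicities when $A,B$ are square and the eigenvalues are simple). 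Either way, $\R^n$ and $\R^p$ split into eigenspaces of $AB^T$ and $B^TA$ respectively.

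The key computational step is the following: given $\mu \neq 0$ with eigenvectors $u \in \C^n$ of $AB^T$ and $v \in \C^p$ of $B^TA$ (related via $B^Tu$ and $Av$ up to scalars), and given $\lambda \in S^*(\mu)$ with $\lambda \neq 0$, one writes down the block equation $\Lambda_{A,B} Z = \lambda Z$ for $Z = (x,y,x',y')$. The bottom two blocks force $x' = x/\lambda$ and $y' = y/\lambda$, and substituting into the top two blocks yields a $2\times 2$ linear system in $(x,y)$ with coefficients involving $A$, $B^T$, $\eta$ and $\lambda$; projecting onto the relevant eigenspaces of $AB^T$ and $B^TA$ reduces this to a scalar relation which is exactly $\lambda^2(1-\lambda)^2 = \mu\eta^2(1-2\lambda)^2$, i.e. $\lambda \in S^*(\mu)$, together with a one-dimensional solution space for $(x,y)$ in that block. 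The point is to check that $S^*(\mu)$ has exactly $4$ distinct nonzero roots when $\eta < \tfrac{1}{2\sqrt{\mu_{\max}}}$ (a degree-$4$ polynomial whose discriminant one controls using $\eta^2\mu < 1/4$, so no repeated roots and $0$ is not a root since $\mu\neq 0$), so each such $\mu$ with multiplicity $m_\mu$ in $\S(A,B)$ contributes $4 m_\mu$ independent eigenvectors. I would treat $\mu = 0$ separately: the relevant part of the space is $\Ker(B^T)\times\Ker(A)$ (and a companion piece), on which $\Lambda_{A,B}$ acts through $S^*(0)$, whose roots are $0$ and $1$; here one must check the eigenvectors for $\lambda=1$ (the Nash equilibria, giving $\Ker(\Lambda-I)$) and for $\lambda=0$ genuinely span this block, rather than producing a Jordan block — this is where the $A=\alpha B$ hypothesis (or squareness with distinct eigenvalues) does real work, because it guarantees $\Ker(B^T) = \Ker(A^T)$-type compatibility so the zero-eigenvalue part is itself diagonalizable.

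Summing: $\dim\big(\bigoplus_{\lambda} E_\lambda\big) = \sum_{\mu \in \S(A,B),\,\mu\neq 0} 4 m_\mu \;+\; (\text{contribution of }\mu=0)$, and a dimension bookkeeping (using that the nonzero eigenvalues of $B^TA$ and $AB^T$ coincide with equal multiplicities, and that the kernels account for the rest of $\R^n\oplus\R^p$, doubled by the two copies in $Z$) shows this equals $2(n+p)$. Hence the eigenvectors span $\C^{2(n+p)}$, which is the claim.

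The main obstacle I anticipate is the $\mu = 0$ block and, relatedly, ensuring no Jordan blocks appear at $\lambda = 0$ or $\lambda = 1$: the generic-$\mu$ analysis is routine once one knows $S^*(\mu)$ has four simple nonzero roots, but at $\mu=0$ the polynomial $\lambda^2(1-\lambda)^2$ has double roots at both $0$ and $1$, so diagonalizability is not automatic from the spectrum and must be extracted from the structural hypothesis on the pair $(A,B)$ — precisely the hypothesis that fails in Example 2 after Theorem \ref{prop_conv_OGDA_xAy_xBy}, where $\Lambda_{A,B}$ is indeed not diagonalizable. A secondary technical point is verifying the four roots of $S^*(\mu)$ are distinct for all $\mu\in\S(A,B)$ simultaneously, which follows from $\eta^2 \mu_{\max} < 1/4$ but requires a short discriminant or monotonicity computation.
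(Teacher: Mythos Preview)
Your proposal is correct and follows essentially the paper's approach: dimension counting via the explicit description of the eigenspaces $E_\lambda$, using that $B^TA$ (hence $AB^T$) is diagonalizable under either hypothesis so that the geometric multiplicities sum correctly. Your identification of the $\mu=0$ block as the delicate point, and of the equality $\Ker(B^T)=\Ker(A^T)$ (forced by $A=\alpha B$) as the structural input that makes $\dim E_0+\dim E_1$ match what is needed, is exactly how the paper proceeds in the first case.

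One minor divergence worth noting: for the second hypothesis (square $A,B$ with $B^TA$ having $n$ distinct nonzero real eigenvalues), the paper argues by showing that $\Lambda_{A,B}$ has $4n$ \emph{distinct} eigenvalues, spending a page on monotonicity and real/imaginary-part arguments to verify that the sets $S^*(\mu_i)$ are pairwise disjoint. Your dimension-counting route is actually shorter here: since all $\mu_i\neq 0$ there is no $\mu=0$ block at all (so your stated worry about Jordan blocks at $\lambda\in\{0,1\}$ evaporates in this case), each $m_{\mu_i}=1$, and $|S^*(\mu_i)|=4$ under the step-size assumption gives $\sum_\lambda \dim E_\lambda=4n=2(n+n)$ directly --- the disjointness of the $S^*(\mu_i)$ is automatic anyway, since $\mu=\lambda^2(1-\lambda)^2/(\eta^2(1-2\lambda)^2)$ is a function of $\lambda$. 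So your unified approach is if anything a slight simplification of the paper's Case~2.
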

The proof is in the Appendix.\\

We now present our last theorem, related to  the convergence of OGDA to a Nash equilibrium or to infinite payoffs in the case of general non zero-sum games with payoffs $(x^TAy +b^Tx +c^Ty +d, x^TBy+e^Tx+f^Ty+g)$. \\

\begin{thm}
  \label{thm_conv_OGDA_general_coop}
 Let $A, B$ be in $\R^{n\times p}$,  $b$, $e$ in $\R^n$, $c$, $f$ in $\R^p$, and  $d$, $g$ in $\R$, with $\S(A,B) \subset \R$.
 Let $\eta\in (0,\frac{1}{2 \, \sqrt{\mu_{\max}}})$, where $\mu_{\max}=\rho(B^TA) = \rho(AB^T)$ is  the largest eigenvalue of $B^TA$.
 Assume that $\Lambda_{A,B}$ is diagonalizable in $\C$, and  that a Nash equilibrium exists, i.e. that $\{(x,y)\in \R^n \times \R^p, B^Tx+f=0, Ay+b=0\}\neq \varnothing$. \\
  
  Given  an initialization $(x_0, y_0,x_{-1},y_{-1})\in \R^n\times \R^p\times \R^n \times \R^p$, consider the  Optimistic Gradient Descent algorithm (\ref{OGDA_generalbili}). Then either $(x_t,y_t)_t$ converges exponentially fast to a Nash equilibrium, that is, an element of $\{(x,y)\in \R^n \times \R^p, B^Tx+f=0, Ay+b=0\}$, or the current payoffs $x_t^TAy_t+b^Tx_t+c^Ty_t + d $ and $x_t^TBy_t + e^Tx_t + f^Ty_t+g $ converge exponentially fast to $+\infty$. \\

 The assumptions $\S(A,B) \subset \R$ and $\Lambda_{A,B}$   diagonalizable    are in particular true in each of the  following cases: 

1)  $B=\alpha A$ for some real number $\alpha\neq 0$, 

2) $A$ and $B$ are square matrices in $\R^{n\times n}$ such that $B^TA$ is diagonalizable with $n$  distinct nonzero real eigenvalues. 
\end{thm}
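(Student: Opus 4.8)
The plan is to carry out a spectral analysis of $\Lambda_{A,B}$ entirely analogous to the one used for Theorem \ref{thm_conv_OGDA_xAy}, but where the missing coercive structure of the zero-sum case (there one has $\S(A,-A)\subset \R_+$, hence $-\mu\in\S$ means $\mu\le 0$ and the relevant $S^*(\mu)$ roots stay inside the unit disk) is replaced by a dichotomy driven by the sign of the eigenvalues of $B^TA$. By Proposition \ref{prop_spectrum_xAy_xBy}, $\Sp(\Lambda_{A,B})=\bigcup_{\mu\in\S(A,B)}S^*(\mu)$ with $S^*(\mu)=\{\lambda : \lambda^2(1-\lambda)^2=\mu\eta^2(1-2\lambda)^2\}$. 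First I would sort the eigenvalues $\mu\in\S(A,B)\subset\R$ into three groups: $\mu=0$, $\mu<0$, and $\mu>0$. For $\mu=0$ one gets $\lambda\in\{0,1\}$, with $\lambda=1$ contributing the fixed-point (Nash equilibrium) directions; $\Ker(\Lambda_{A,B}-I)$ is exactly $\{(x,y,x,y): B^Tx+f=0,\ Ay+b=0\}$ after the affine shift, and this is where the ``converges to a Nash equilibrium'' alternative lives. For $\mu<0$, write $\mu=-\nu$ with $\nu>0$; since $\eta<\frac{1}{2\sqrt{\mu_{\max}}}$ we have $4\eta^2\nu<1$, and the same root-analysis as in part 2) of Theorem \ref{thm_conv_OGDA_xAy} shows every $\lambda\in S^*(-\nu)$ satisfies $|\lambda|\le\sqrt{\tfrac12(1+\sqrt{1-4\eta^2\nu})}<1$. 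The genuinely new phenomenon is $\mu>0$: here I claim $S^*(\mu)$ contains a \emph{real} root $\lambda>1$ (for small $\eta$ one checks $\lambda^2(1-\lambda)^2-\mu\eta^2(1-2\lambda)^2$ is negative at $\lambda=1$ and positive as $\lambda\to+\infty$, giving a root $>1$, in fact $\lambda\to +\infty$ — more precisely there is a root near $1+\eta\sqrt{\mu}$ or similar), and this eigenvalue/eigenvector is what produces the divergence to $+\infty$.

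Next I would set up the decomposition. Using Lemma \ref{lem314} (whose hypotheses are exactly the two sufficient conditions 1) and 2) listed in the theorem, and which is subsumed under the blanket assumption that $\Lambda_{A,B}$ is diagonalizable), write $\C^{2(n+p)}=\bigoplus_{\lambda\in\Sp(\Lambda_{A,B})}E_\lambda$. Decompose the (affinely shifted) initial vector $\widetilde Z_0 = Z_0 - Z^\star$, where $Z^\star=(x^\star,y^\star,x^\star,y^\star)$ is any fixed Nash equilibrium, into $\widetilde Z_0=\sum_\lambda v_\lambda$ with $v_\lambda\in E_\lambda$ (real vector, but the complex eigenvectors come in conjugate pairs so $\Lambda_{A,B}^t\widetilde Z_0=\sum_\lambda \lambda^t v_\lambda$ stays real). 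Split $\Sp(\Lambda_{A,B})\setminus\{1\}$ into $\Sigma_{<}=\{|\lambda|<1\}$ (all $\lambda$ coming from $\mu\le 0$, $\mu\ne 0$, plus the $\lambda=0$ piece) and $\Sigma_{>}=\{|\lambda|\ge 1,\ \lambda\ne 1\}$ (the real roots $>1$ coming from $\mu>0$, together with their partners; one should check $S^*(\mu)$ for $\mu>0$ has no root \emph{on} the unit circle other than possibly via the $\mu=0$ degeneracy, which is a short separate calculation). Then: if the projection of $\widetilde Z_0$ onto $\bigoplus_{\lambda\in\Sigma_{>}}E_\lambda$ vanishes, $Z_t-Z^\star=\sum_{\lambda\in\Sigma_<}\lambda^t v_\lambda + (\text{const part in }E_1)$, so $(x_t,y_t)$ converges exponentially (rate $\le\lambda_{\max}$) to a Nash equilibrium — and here I would reuse the angle-control argument from the proof of Theorem \ref{thm_conv_OGDA_xAy} to turn the eigenvalue bound into a genuine norm bound with an explicit constant. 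Otherwise, the dominant term is $\lambda_{\rm dom}^t v_{\rm dom}$ with $\lambda_{\rm dom}>1$ real, and I must show the current payoffs blow up to $+\infty$.

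The main obstacle is precisely this last step: showing that when the unstable component is nonzero the payoffs $g_1(x_t,y_t)$ and $g_2(x_t,y_t)$ both tend to $+\infty$ (not to $-\infty$, and not oscillating). The payoff is a quadratic-plus-affine form in $Z_t$; along the dominant direction $Z_t\approx \lambda_{\rm dom}^t v_{\rm dom}$, so $g_1(x_t,y_t)\approx \lambda_{\rm dom}^{2t}\,q_1(v_{\rm dom})$ where $q_1$ is the quadratic part evaluated on the unstable eigenvector. So I need the sign of $q_1(v_{\rm dom})$ (and $q_2(v_{\rm dom})$) to be strictly positive. The key computation: an eigenvector of $\Lambda_{A,B}$ for eigenvalue $\lambda$ has the block form $(x,y,x/\lambda,y/\lambda)$ with $(x,y)$ forced, for $\mu>0$ an eigenvalue of $B^TA$ with $B^TA\xi = \mu\xi$, to be proportional to $(A\zeta,\ \text{sign factor}\cdot\zeta)$ for the corresponding singular-type vector $\zeta$; plugging into $x_t^TAy_t$ one gets something like $\mu\eta$-weighted $\|A\zeta\|^2>0$ up to a positive scalar depending on $\lambda>1$, and similarly for $x_t^TBy_t$ using $AB^T\eta=\mu\eta$. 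I would need to do this block computation carefully, using the explicit relation $\lambda^2(1-\lambda)^2=\mu\eta^2(1-2\lambda)^2$ to simplify the scalar, and check the sign is the same ($+$) for both $g_1$ and $g_2$ — this is where the hypothesis $\mu>0$, i.e. $B^TA$ has a positive eigenvalue (a genuine ``alignment'' or potential-for-cooperation condition between $A$ and $B$), does the real work. The affine terms $b^Tx_t+c^Ty_t$ are $O(\lambda_{\rm dom}^t)$, lower order than $\lambda_{\rm dom}^{2t}$, so they don't affect the conclusion. A final bookkeeping point: one must also rule out the borderline case where the unstable component sits on $|\lambda|=1$ but $\lambda\ne1$ — under diagonalizability that would give a bounded non-convergent orbit, so I should verify via the root equation that for $\mu\in\R$, $S^*(\mu)\cap\{|\lambda|=1\}\subset\{1\}$, which pins down the clean dichotomy.
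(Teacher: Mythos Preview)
Your proposal is correct and follows essentially the same route as the paper: shift by a Nash equilibrium to reduce to the homogeneous case, diagonalize $\Lambda_{A,B}$, split by the sign of $\mu\in\S(A,B)$ using Lemma~\ref{lem_valeigs}, and then run a dichotomy on whether the largest-modulus eigenvalue carrying a nonzero component of $Z_0$ exceeds $1$; for the divergent branch, the explicit eigenvector formula $x'=\frac{(1-2\lambda_*)\eta}{\lambda_*(1-\lambda_*)}Ay'$ together with the correct square-root branch $\lambda_*(1-\lambda_*)=\sqrt{\mu_*}\,\eta(1-2\lambda_*)$ (both sides negative for $\lambda_*>1$) gives $x'=\frac{1}{\sqrt{\mu_*}}Ay'$, whence $x^TAy=\frac{\lambda_*^2}{\sqrt{\mu_*}}\|Ay'\|^2>0$ and $x^TBy=\lambda_*^2\sqrt{\mu_*}\|y'\|^2>0$, exactly as you anticipate. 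The paper also uses Lemma~\ref{lem_valeigs} to dispose of the unit-circle bookkeeping you flag: for $\mu<0$ all four roots have modulus $<1$, for $\mu>0$ one root is real $>1$ and the other three lie in $(-1,1)$, so $|\lambda|=1$ forces $\lambda=1$.

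One small remark: the angle-control argument you plan to import from Theorem~\ref{thm_conv_OGDA_xAy} to get an explicit constant in the convergent case is unnecessary here (the statement only claims ``exponentially fast''), and would in fact not transfer directly, since Lemma~\ref{lem_eigenspace} relies on the orthogonality structure special to $B=-A$; the paper simply invokes diagonalizability without quantifying the constant.
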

Notice that case $1)$   includes the case of common payoffs games $A=B$, as well as the case of zero-sum games $B=-A$. \\

\section{Illustration: Generative Adversarial Networks}

\label{sec_illustration}
Algorithms to approximate probability distributions and generate new data from some unknown laws are more and more needed. Despite this necessity, the theory behind this problem is lacking, resolving in inconsistent performances. The main type of generative algorithms are Generative Adversarial Networks (GANs) introduced in \cite{goodfellow2014generative} by Goodfellow \textit{et al.}

\subsection{GANs and WGANs}

In GANs, two neural networks (a discriminator and a generator) are set in competition against each other. The goal of the generator is to generate new data as close as possible to the  true data.
And in  the original version of GANs,  the discriminator aims at recognizing true data from the created ones.  We will here  mostly consider  Wasserstein GANs, a very popular sort of GANs that was introduced by Arjovsky in \cite{arjovsky2017wasserstein} and uses the Wasserstein distance  between probability distributions $\nu$ and $\nu'$,  defined by
\[W_1(\nu, \nu') = \sup_{f \in \lip} \E_{\nu}[f] - \E_{\nu'}[f].\]\\
There is a true distribution $\nu^*$, and we (the generator) want to generate a distribution as close as possible to $\nu^*$.
Unfortunately, we do not have access to the true distribution, but only to an empirical approximation $\hat{\nu}$, via some samples generated according to $\nu^*$. We also cannot choose any distribution but are restricted to choose a parameter $\theta$, and 
 we  want to find the parameter  minimizing the Wasserstein distance between the generated distribution  and the empirical distribution $\hat{\nu}$.\\

More precisely,  the generator $G_\theta$ is a usual neural network  from a latent space $\R^m$ to the feature space $\R^d$.
From a Gaussian distribution $Z \sim \mathcal{N}(0, I_{m})$ it defines a probability measure $\nu_\theta$, which is the law of the random variable $G_\theta(Z)$.
In order to find the generated measure which is the closest to $\hat{\nu}$, we want to find the parameter $\theta$ minimizing the Wasserstein distance between $\hat{\nu}$ and $\nu_\theta$.
This is given by the problem
\[ \inf_\theta W_1(\hat{\nu}, \nu_\theta) = \inf_\theta \sup_{f \in \lip}  \E_{\hat{\nu}}[f] - \E_{\nu_\theta}[f] \]
where $f$ plays the role of the discriminator.\\

However, in practice, we cannot compute the supremum over $1$-Lipschitz function, thus we need to approximate this set using neural networks.
We parameterize the set of discriminators, and denote by $D_\beta$ the 1-Lipschitz neural network  from $\R^d$ to $\R$ with parameter $\beta$.
In practice, we want  the set $\{D_\beta | \beta\}$ to be dense in the set of 1-Lipschitz function (as shown possible, for instance, in Anil \textit{et al.} in \cite{anil2019sorting}).  \\ 

The WGAN problem we finally want to solve can thus be written as
\begin{equation} \inf_\theta \sup_\alpha \mathcal{L}(\theta, \beta)  \text{ where }  \mathcal{L}(\theta, \beta) = \E_{\hat{\nu}}[D_\beta] - \E_{\nu_\theta}[D_\beta] \end{equation}

\subsection{First example} 

We now look at a simple illustrative example introduced  by Daskalakis \textit{et al.} in \cite{daskalakis2017training}, and see how  our theoretical bound of convergence looks like. 
We assume that the data follows a multivariate normal distribution of mean $v \in \R^d=\R^n$: $\nu^* = \mathcal{N}(v, I_d)$, where $v$ is unknown, and we make the simplifying assumption $\hat{\nu}=\nu^*$. 
We begin from a multivariate normal distribution $Z$ with  mean 0: $Z \sim \mathcal{N}(0, I_n)$, and study linear generators of the form $G_\theta(z) = z + \theta$.
For the set of discriminators, we use the very small set  of all $D_\beta: x \mapsto \langle \alpha, x\rangle$, where $\norm{\beta}_2 \leq 1$ so that $D_\beta$ is 1-Lipschitz.
 
The WGAN problem becomes:
\[ \inf_\theta \sup_\beta \E_{x \sim \mathcal{N}(v, I_n)}[\langle \beta, x\rangle] - \E_{z \sim \mathcal{N}(0, I_n)}[\langle \beta, z+\theta\rangle],\]
that is $$\inf_\theta \sup_\beta \langle\beta, v-\theta\rangle.$$
We are in the case $x^TAy + b^Tx + c^Ty + d$ of  section \ref{subsec_general_zs} with $A = -I_n \in \R^{n \times n}$ (hence $\mu_{\max}=1$), $b=v$, $c=0$ and $d=0$. $\theta$ is arbitrary in $\R^n$.  A priori we have the constraint $\|\beta\|_2\leq 1$, but allowing $\|\beta\|$ arbitrary does not change the value nor the optimal strategies of the generator, so we simply skip the constraint and consider any $\beta$ in $\R^d$. 
Theorem \ref{thm_conv_OGDA_zs_general} 
  implies here that  for $\eta < \frac{1}{2}$, OGDA will converge exponentially fast, with ratio $\lambda_{\max}=\sqrt{\frac12(1+\sqrt{1-4\eta^2})}$.
Running OGDA on this example with $\eta=0.3$ gives us $\lambda_{\max} = \frac{3}{\sqrt{10}}\approx 0.949$ and $C \approx 2.197$.\\


\begin{figure}[h]
  \begin{subfigure}{0.5\textwidth}
  \includegraphics[width = \textwidth]{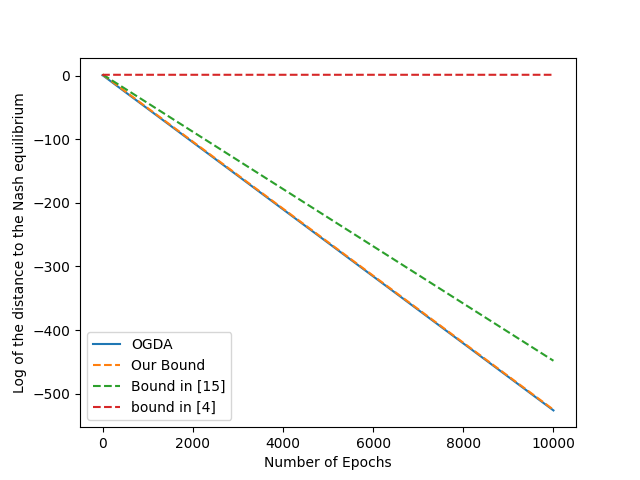}
  \caption{$\eta=0.3$}
  \end{subfigure}
  \begin{subfigure}{0.5\textwidth}
  \includegraphics[width = \textwidth]{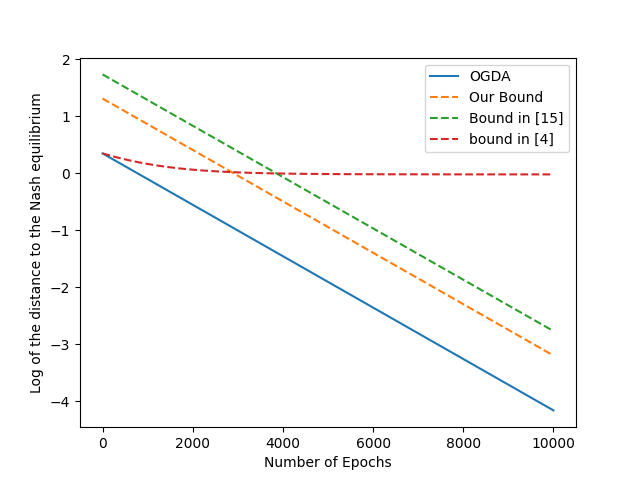}
  \caption{$\eta=0.03$}
  \end{subfigure}
  \caption{\centering Log plot of the distance to the Nash equilibrium and predicted bounds}
  \label{wgans_ogda}
\end{figure}

We plotted the graph of the distance of OGDA to the set of Nash equilibria with regard to time, on figure \ref{wgans_ogda}, where we took $n=2$ and $v = \begin{pmatrix} 3\\4 \end{pmatrix}$. In the  pictures, we see the exponential convergence of OGDA for different values of $\eta$.  We also plotted the different bounds that were found previously.

In the first picture, $\eta$ is close to the maximum possible level, so the convergence is really fast.
We have seen in section \ref{subsec_comments} that our ratio $\lambda_{\max}$ is the best  possible, and these figures mark well this fact: the plots of the log of the distance to the limit Nash equilibrium and of the log of the bound we found are parallel.
However, if $\eta=0.03$ our  constant $C$ is not optimal, as can be seen by the space between the two parallel lines.\\

For most of the different bounds   found in the literature, the higher $\eta$ is, the fastest the convergence occurs. 
This result can be seen in figure \ref{wgans_ogda}, as the bound given by the different papers, as well as the true speed of convergence of OGDA is faster for $\eta=0.3$ compared to $\eta=0.03$.
The previous papers gave results only for $\eta \leq \frac{1}{2\sqrt{\mu_{\max}}}$, and we know that, for this range of $\eta$, whatever the matrix $A$, the theoretical bounds on OGDA will be the fastest when $\eta$ equals $\frac{1}{2\sqrt{\mu_{\max}}}$.
What was shown here in section \ref{section_bilinear} is that OGDA will still converge for any $\eta \in \left[ \frac{1}{2\sqrt{\mu_{\max}}}, \frac{1}{\sqrt{3\mu_{\max}}} \right)$, and that for all matrices, the optimal $\eta$ is in this range.\\

In addition to this amelioration, a final step toward a better convergence rate has been  done in subsection \ref{subsec_Adagger}, where transforming the zero-sum game into a general-sum game using $B=-(A^\dagger)^T$, converging to the same limit point with higher speed greatly boost the convergence to a Nash equilibrium of the non-zero game. This allows us to have the best possible exponential ratio: $\lambda_{\max} = \frac{1}{\sqrt{2}}$ which improves a lot all previous theoretical rates. In the present case where $A=-I_n$, we already have $-A= -(A^\dagger)^T$ so there is no gain in changing the game.

\subsection{Generalization} 
We generalize here the previous example, still keeping the linear structure. 
We assume that the data follows a multivariate normal distribution of mean $v \in \R^d$: $\nu^* = \mathcal{N}(v, I_d)$, where $v$ is unknown, and we make the simplifying assumption $\hat{\nu}=\nu^*$.  
Here $Z \sim \mathcal{N}(0, I_m)$, with $m<<d$ and study linear generators of the form $G_\theta(z) = \theta + A_1 z\in \R^d$, for some matrix $A_1\in \R^{d\times m}$. 
For the set of discriminators, we use the  set  of all $D_\beta: x \mapsto \langle A_2\beta, x\rangle$, for $\|\beta\|_2\leq 1$, with $A_2$ a fixed matrix in ${\R^{d\times d}}$ satisfying $\rho(A_2^TA_2)\leq 1$ so that $D_\beta$ is 1-Lipschitz for each $\beta$. 

The WGAN problem becomes:
\[ \inf_\theta \sup_\beta \E_{x \sim \mathcal{N}(v, I_d)}[\langle A_2\beta, x\rangle] - \E_{z \sim \mathcal{N}(0, I_m)}[\langle A_2 \beta, \theta + A_1 z \rangle],\]
that is $$\inf_\theta \sup_\beta \langle A_2\beta, v-\theta\rangle.$$
We are in the case   $x^TAy + b^Tx + c^Ty + d$ of  section \ref{subsec_general_zs} with $x=\beta$, $y=\theta$, $A = -A_2$, $b=A_2^Tv$, $c=0$ and $d=0$. $\theta$ is arbitrary in $\R^n$ and a priori we have the constraint $\|\beta\|_2\leq 1$, but again we skip the constraint which does not change the value nor the optimal strategy of the generator.
The set of Nash equilibria is then $\Ker(A_2)\times \{\theta \in \R^n, v-\theta\in \Ker(A_2^T)\}$.\\

We now consider the final amelioration given in  subsection \ref{subsec_Adagger}.  Assume $A_2^T\neq 0$ and define $\mu_{\max}=\max\{\mu, \mu \in \Sp(A_2^TA_2)\}\leq 1$, $\mu_{\min}=\min\{\mu>0, \mu \in \Sp(A_2^TA_2)\}$ and $\alpha=\frac{\mu_{\min}}{\mu_{\max}}$. Running OGDA with  $\eta<\frac{1}{  \sqrt{3\mu_{\max}}}$, Theorem \ref{thm_conv_OGDA_zs_general} ensures that $(\beta_t)$ converges to the orthogonal projection of $\beta_0$ onto $\Ker(A_2)$, and $(\theta_t)$ converges to the orthogonal projection of $\theta_0$ on $\{\theta \in \R^n, \theta \in v+ \Ker A_2^T\}$. The geometric ratio of convergence $\lambda_{\max}$ is  $\max\{\lambda_*, \lambda_{**}\}$, where
  $$\lambda_* = {\bf{1}}_{\mu_{\min} \leq \frac{1}{4\eta^2}} \sqrt{\frac{1}{2}(1+\sqrt{1-4\eta^2 \mu_{\min}})}\; \text{ and } \lambda_{**} = {\bf{1}}_{\mu_{\max} \geq \frac{1}{4\eta^2}} \sqrt{2 \eta^2\mu_{\max} + \eta \sqrt{\mu_{\max}} \sqrt{4\eta^2 \mu_{\max}-1}}.$$ And for  an optimal $\eta$, we obtain the  optimal rate of  $$\lambda_{\max}^*=\sqrt{\frac{1}{2} \left( 1+ \sqrt{1-\frac{1}{8} \left(3+6 \alpha-\alpha^2 -(1-\alpha)\sqrt{(1-\alpha)(9-\alpha)}\right)}\right)}\in [\frac{\sqrt{2}}{{2}},1).$$
  We can introduce $B=A_2^\dagger$ and run OGDA for the general-sum game $(x^T(-A_2^T)y+v^T A_2x, x^TBy)$. According to Theorem \ref{thm_conv_OGDA_general_xAy_xBy}, OGDA now converges to the same limit but with geometric ratio $\lambda'_{\max} = \sqrt{\frac12(1+\sqrt{1-4\eta^2})}$ which, for $\eta=1/2$, is $\frac{\sqrt{2}}{{2}}$, the best possible ratio. And this is true for all matrices $A_2$ and vector $v$.
  
  {\bf Example:} $-A_2=A=  \begin{pmatrix} 1 &  0  \\ 0 & 1/2\ \end{pmatrix}$ and $v=(1,1)$. In the zero-sum game, P1 maximizes $g_1(\beta,\theta)=-\beta A_2^T\theta+\beta A_2^Tv$, whereas P2 minimizes $g_1(\beta,\theta)$. Running OGDA for the zero-sum game, for $\eta<\sqrt{\frac{1}{3}}\simeq 0.577$ induces  the convergence of $(\beta_t, \theta_t)$ to $(0,v)$, with optimal geometric ratio of convergence $\lambda^*_{\max}\simeq 0.9538$ for $\eta^* \simeq 0.5608$. 
  Let us run OGDA on the general-sum game where player 1 still maximizes $g_1(\beta,\theta)$ and player 2 now maximizes $g_2(\beta,\theta)=\beta^TB \theta$, with $B= A_2^\dagger=\begin{pmatrix} -1 &  0  \\ 0 & -2\ \end{pmatrix}$. By Theorem \ref{improvementdagger}, $(\beta_t, \theta_t)$ still converges to the same limit $(0,v)$, and if $\eta \simeq 1/2$ the convergence ratio is now  the best possible  convergence rate  $\lambda_{\max}^{**}=\frac{1}{\sqrt{2}}\simeq 0.707.$\\
  The speed of convergence can be seen on the figure \ref{fig_dagger}.
  Using a general-sum game  with the matrix $A_2^\dagger$ greatly improves the speed of convergence of OGDA.

  \begin{figure}[h]
    \centering
    \includegraphics[width = 0.7\textwidth]{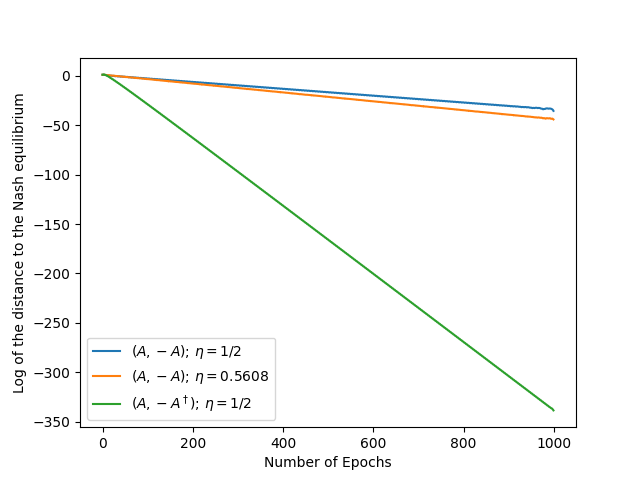}
    \caption{Acceleration with $A^\dagger$}
    \label{fig_dagger}
  \end{figure}

\subsection{Discussion and limitations}
Our results   concern bilinear games, which can be seen as  toy-models compared to the complexity of real WGANs. To go toward  more realism, several issues  could be considered.

In  practice, neural networks have millions of parameters, if not billions, and they suffer from the curse of dimensionality.
However, OGDA will not suffer more than GDA in this account, because OGDA, as GDA is a first order method with a single oracle call per iteration.
The only difference is that OGDA needs to store previous gradient, and thus needs more memory space than GDA.
For both of these algorithms, the speed of convergence to an equilibria is independent of the dimensions $n$ and $p$ in the case of bilinear games.

Another issue  is to take into account the stochastic character of the answer that we receive.
In practice, we won't exactly have access to $\E_{x \sim \mathcal{N}(v, I_d)}[\langle \alpha, x\rangle]$, nor to $\E_{z \sim \mathcal{N}(0, I_d)}[\langle \alpha, z+\theta\rangle]$, but only to empirical averages of data following the according distribution.
In \cite{daskalakis2017training}, Daskalakis \textit{et al.} already considered  this problem.
A deeper study on stochasticity for OGDA and Extra-Gradient was done by Hsieh \textit{et al.} in \cite{hsieh2020limits}.
They showed a geometric convergence of the expected distance to the saddle points for strongly concave/convex games, with a bound depending on the variance of the noise.

A further step is not to assume the unboundedness of the problem.
In  this article  like in many others, we assume  that the set of parameters is an Euclidean space: we are doing minimization and maximization on $\R^p$ and $\R^n$. But  in  reality, neural networks are computer-based, thus the set of possible neural-networks is far more complex. However, the  goal is to model the largest possible set of  functions, and the maxima and minima reached by the weights are very large, and will get larger if float maximum bit-size gets higher.


A last important factor, and not the least of them,  is the fact that most of the GANs problem used in practice are non-concave/non-convex,  because neural networks do not have a convex dependence with their parameters. Convergence to equilibria for non-concave/non-convex games is currently far from being  well-understood \cite{daskalakis2022}.  \\



\section{Conclusion} \label{secconclusion}

We have proved   the exponential convergence of OGDA to a Nash equilibrium for any bilinear game, and also provided the best ratio for the convergence.
This implies  an important stability property for  the OGDA system as the number of steps increases. In case of multiple Nash equilibria, we  characterized the limit equilibrium as a function of the initialization, and gave a simple expression of it. 
We also improved  the range of convergence of OGDA with regard to the gradient step $\eta$ in the case of zero-sum games.
These results clarify and generalize all previous results of the literature on the topic.

We also extended the study to  general-sum games.  We   presented a simple trick to guarantee, via a variant of OGDA,  convergence to Nash equilibria in our general-sum games.   We   also showed  that in an important class of games (including the common payoffs case as well as the zero-sum case) either OGDA converges to a Nash equilibrium, or the payoffs of both players converge to $+\infty$. 

We gave sufficient conditions for OGDA to converge to a Nash equilibrium of a general-sum game, and showed how to increase the speed of convergence of  a min-max problem involving a payoff matrix $A$ by introducing a general-sum game  using the matrix $-(A^\dagger)^T$, i.e. minus the transpose Moore-Penrose inverse of $A$. This may be seen as a proof of concept that  non-zero sum games 
can be used to solve efficiently Minmax or zero-sum problems. Extending this approach to general, non necessarily bilinear, games, seems promising and is left for future research.

The bilinear games we have studied are widely used as a toy model for many applications, and we illustrated our results on   GAN examples. These  may  be used  as an  additional argument in favor of using the Optimistic version of GDA in GANs. Nevertheless, if one wants to go further in the application to GANs, there are  several factors that should be  taken into account.
One of them would be  the stochasticity of the payoffs. Some experiments were made in \cite{daskalakis2017training}, but no theoretical convergence speed is known, nor any bound on the proximity to a Nash equilibrium depending on the noise of the stochastic payoff.
A second and harder factor is the non-concave/non-convex properties  of real  instances of GANs.


All in all, the general comprehension of OGDA  is improving,  and so are the applications to the stability of first-order methods  for  GANs. This paper aims at continuing this  dynamics, in giving more insights into zero-sum and non zero-sum  problems.

\newpage

\bibliography{biblio}
\bibliographystyle{abbrv}

\newpage

\appendix
\section{Appendix: Proofs}
\subsection{Proof of Theorem \ref{thm_conv_OGDA_xAy}}
\label{subsec_proof_thm_conv_OGDA_xAy}

We fix $A\in \R^{n\times p}\neq 0$ and $\eta>0$.

 \subsubsection{Convergence  to Nash equilibria for general $\eta>0$}
 \label{section_cvne}
 
Recalling the characteristic  polynomial of $\Lambda$ in the particular case where $A=(1)$, we introduce the following sets:
\begin{defi}
  Given $\mu \in \C$, we define $$S(\mu) = \{ \lambda \in \C, \lambda^2(1-\lambda)^2 +\mu \eta^2(1-2\lambda)^2=0\}.$$
\end{defi}
Notice that if $\eta=0$ or $\mu=0$, then $S(\mu)=\{0,1\}$. Simple computations imply the following lemma.


\begin{lemma}
  \label{lem_expression_Smu}
  Assume $\mu$ is real non-negative. We write $X=\eta \sqrt{\mu}\geq 0$,  and let $\delta\in \C$ be such that $\delta^2=1-4X^2$. More precisely, if $4X^2\leq 1$ we use $\delta=\sqrt{1-4X^2}$ and if $4X^2\geq 1$ we use $\delta= i \sqrt{4X^2-1}$.

Then we can write $$S(\mu)=\{\lambda_1(\mu), \lambda_2(\mu), \lambda_3(\mu), \lambda_4(\mu)\},\; {\it with}$$
 $$\lambda_1(\mu)=\frac{1}{2}  (1+ \delta + 2 iX)=\overline{\lambda_3}(\mu), \;{\it and}\;\lambda_2(\mu)= \frac{1}{2}  (1- \delta + 2 iX)=\overline{\lambda_4}(\mu).$$

  0) If $X=0$, then $S(\mu)=\{0,1\}$.
  
1) If $0<X<1/2$, $S(\mu)$ has 4 elements and  $|\lambda_1(\mu)|^2=|\lambda_3(\mu)|^2=\frac12(1+\sqrt{1-4X^2})<1$, and $|\lambda_2(\mu)|^2=|\lambda_4(\mu)|^2=\frac12(1-\sqrt{1-4X^2})<\frac12.$

2) If $X=1/2$, $S(\mu)=\{\frac12(1+i), \frac12(1-i)\}$.

3) If $X>1/2$,  $S(\mu)$ has 4 elements and  $|\lambda_1(\mu)|^2=|\lambda_3(\mu)|^2=2X^2+X\sqrt{4X^2-1} $, and $|\lambda_2(\mu)|^2=|\lambda_4(\mu)|^2=2X^2-X\sqrt{4X^2-1}.$
\end{lemma}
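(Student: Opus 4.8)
The plan is to reduce the quartic defining $S(\mu)$ to a pair of quadratics by extracting a square root. Writing $X=\eta\sqrt{\mu}\ge 0$, the equation $\lambda^2(1-\lambda)^2+\mu\eta^2(1-2\lambda)^2=0$ reads $\big(\lambda(1-\lambda)\big)^2=-X^2(1-2\lambda)^2$, so $\lambda\in S(\mu)$ if and only if $\lambda(1-\lambda)=iX(1-2\lambda)$ or $\lambda(1-\lambda)=-iX(1-2\lambda)$. Each alternative is a quadratic in $\lambda$, namely $\lambda^2-(1+2iX)\lambda+iX=0$ and $\lambda^2-(1-2iX)\lambda-iX=0$, and a one-line computation gives that both have discriminant $1-4X^2=\delta^2$. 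Hence their roots are $\tfrac12(1+2iX\pm\delta)$ for the first and $\tfrac12(1-2iX\pm\delta)$ for the second; in particular $S(\mu)$ consists of these (at most) four numbers.

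Next I would match this list with $\lambda_1,\lambda_2,\lambda_3=\overline{\lambda_1},\lambda_4=\overline{\lambda_2}$. Setting $\lambda_1=\tfrac12(1+\delta+2iX)$ and $\lambda_2=\tfrac12(1-\delta+2iX)$, the roots of the first quadratic are exactly $\{\lambda_1,\lambda_2\}$. For the second quadratic one splits into two regimes: if $4X^2\le 1$ then $\delta=\sqrt{1-4X^2}$ is real, so $\overline{\lambda_1}=\tfrac12(1+\delta-2iX)$ and $\overline{\lambda_2}=\tfrac12(1-\delta-2iX)$ are precisely the roots $\tfrac12(1-2iX\pm\delta)$; if $4X^2\ge 1$ then $\delta=i\sqrt{4X^2-1}$ is purely imaginary, $\overline{\delta}=-\delta$, and the same two roots are again $\overline{\lambda_1},\overline{\lambda_2}$ up to a swap of labels. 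This yields $S(\mu)=\{\lambda_1,\lambda_2,\lambda_3,\lambda_4\}$ in all cases; item 0) is the further remark that $X=0$ degenerates the equation to $\lambda^2(1-\lambda)^2=0$ (consistently, $\delta=1$ then gives $\lambda_1=1,\lambda_2=0$).

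It remains to compute moduli and count roots. For item 2), $X=\tfrac12$ forces $\delta=0$, hence $\lambda_1=\lambda_2=\tfrac12(1+i)$ and $\lambda_3=\lambda_4=\tfrac12(1-i)$. For items 1) and 3) I would expand $|\lambda_j|^2=\lambda_j\overline{\lambda_j}$ using $\delta^2=1-4X^2$: when $0<X<\tfrac12$ and $\delta$ is real, $|\lambda_1|^2=\tfrac14\big((1+\delta)^2+4X^2\big)=\tfrac12(1+\delta)$ and $|\lambda_2|^2=\tfrac12(1-\delta)$, with $|\lambda_3|=|\lambda_1|$, $|\lambda_4|=|\lambda_2|$ by conjugation; when $X>\tfrac12$, writing $\delta=i\gamma$ with $\gamma=\sqrt{4X^2-1}$, $|\lambda_1|^2=\tfrac14\big(1+(\gamma+2X)^2\big)=2X^2+X\gamma$ and $|\lambda_2|^2=\tfrac14\big(1+(2X-\gamma)^2\big)=2X^2-X\gamma$. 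Distinctness of the four roots in these regimes follows since the two quadratics have roots with opposite nonzero imaginary parts, and since each quadratic has distinct roots as $\delta\ne 0$ (for $X>\tfrac12$ one also uses $\gamma\ne 0$ and $2X\pm\gamma\ne 0$). I expect no real obstacle; the only point needing care is keeping the conjugation bookkeeping straight in the regime where $\delta$ is purely imaginary.
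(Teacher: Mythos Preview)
Your proposal is correct and is precisely the kind of direct computation the paper has in mind; the paper itself offers no argument beyond ``Simple computations imply the following lemma.'' Your factorisation $\lambda(1-\lambda)=\pm iX(1-2\lambda)$, the discriminant calculation, the conjugation bookkeeping in the two regimes for $\delta$, and the modulus computations are all accurate, so there is nothing to add.
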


Notice that for $X\geq 1/2$, the property $2X^2+X\sqrt{4X^2-1} <1$ is equivalent to $X< \frac{1}{\sqrt{3}}$.

The link between the eigenvalues of the matrices $A^TA$ and $AA^T$, and the eigenvalues of $\Lambda$ is the following.

\begin{lemma}
  \label{lem_eigenvalues_Lambda}
 \[\Sp(\Lambda) = \bigcup_{ \mu \in \S(A)} S(\mu)\]
\end{lemma}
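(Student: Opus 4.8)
The idea is to reduce the eigenvalue problem for the $2(n+p)\times 2(n+p)$ matrix $\Lambda$ to the eigenvalue problem for $A^TA$ (equivalently $AA^T$), exploiting the ``companion-type'' block structure of $\Lambda$. Write a generic vector of $\C^{n+p+n+p}$ as $(u,v,u',v')$ with $u,u'\in\C^n$ and $v,v'\in\C^p$. First I would fix $\lambda\notin\{0,1\}$ and analyse $\Lambda(u,v,u',v')^T=\lambda(u,v,u',v')^T$. The last two block-rows give immediately $u'=u/\lambda$ and $v'=v/\lambda$, and substituting this into the first two block-rows yields the reduced system
\[ (1-\lambda)\,u=\eta\,\frac{1-2\lambda}{\lambda}\,Av,\qquad (1-\lambda)\,v=-\,\eta\,\frac{1-2\lambda}{\lambda}\,A^Tu. \]
If $\lambda=1/2$ this forces $v=0$ and then $u=0$, so $1/2$ is not an eigenvalue of $\Lambda$ (and $1/2\notin S(\mu)$ for any $\mu$, since $\lambda^2(1-\lambda)^2$ does not vanish there); so assume also $\lambda\neq 1/2$. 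Eliminating $u$ between the two equations gives $A^TA\,v=\mu\,v$ with $\mu=-\lambda^2(1-\lambda)^2/(\eta^2(1-2\lambda)^2)$, and a nonzero eigenvector forces $v\neq 0$ (if $v=0$ then $u=0$). Hence $\lambda\in\Sp(\Lambda)\setminus\{0,1\}$ if and only if there is $\mu\in\Sp(A^TA)$ with $\lambda^2(1-\lambda)^2+\eta^2(1-2\lambda)^2\mu=0$, i.e. $\lambda\in S(\mu)$; the converse direction is obtained by setting $u=\tfrac{\eta(1-2\lambda)}{\lambda(1-\lambda)}Av$ for a $\mu$-eigenvector $v$ of $A^TA$ and checking that the two reduced equations hold precisely because $\lambda\in S(\mu)$.

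Next I would handle the exceptional values $\lambda\in\{0,1\}$ directly. Solving $\Lambda(u,v,u',v')^T=0$ forces $u=v=0$, $u'\in\Ker(A^T)$ and $v'\in\Ker(A)$; and $\Ker(\Lambda-I)$ consists of the vectors $(x,y,x,y)$ with $x\in\Ker(A^T)$, $y\in\Ker(A)$, as already noted after the definition of $\Lambda$. Thus $0$ and $1$ are eigenvalues of $\Lambda$ if and only if $A$ is rank-deficient, i.e. if and only if $0\in\S(A)$; and this matches the remaining values of the union, since $S(0)=\{0,1\}$ whereas $S(\mu)\cap\{0,1\}=\varnothing$ for $\mu\neq 0$ (using $\eta>0$). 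Finally, since $A^TA$ and $AA^T$ have the same nonzero eigenvalues, $\S(A)\setminus\{0\}=\Sp(A^TA)\setminus\{0\}$, so ``$\mu\in\Sp(A^TA)$'' above may be replaced by ``$\mu\in\S(A)$''. Collecting the three cases gives $\Sp(\Lambda)=\bigcup_{\mu\in\S(A)}S(\mu)$.

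Alternatively, and equivalently, one can compute the characteristic polynomial outright: writing $\Lambda=\begin{pmatrix}M & N\\ I_{n+p} & 0\end{pmatrix}$ with $M=\begin{pmatrix}I_n & 2\eta A\\ -2\eta A^T & I_p\end{pmatrix}$, two successive Schur complements (valid for $\lambda\notin\{0,1\}$, then extended by polynomial continuity) give $\det(\Lambda-\lambda I)=\pm\,\lambda^{|n-p|}(\lambda-1)^{|n-p|}\prod_{\mu}(\lambda^2(1-\lambda)^2+\eta^2(1-2\lambda)^2\mu)^{m_\mu}$, the product running over the eigenvalues $\mu$ of the smaller of the two Gram matrices $A^TA,AA^T$, counted with multiplicity $m_\mu$; reading off the roots gives the claim, the spurious factor $\lambda^{|n-p|}(\lambda-1)^{|n-p|}$ contributing only $\{0,1\}=S(0)$, which lies in the union exactly when $A$ is rank-deficient. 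I do not expect a real obstacle: the computations are routine, and the only care needed is (i) the separate treatment of $\lambda\in\{0,1\}$ (and the harmless exclusion of $\lambda=1/2$), (ii) the $\lambda^{|n-p|}$, $(\lambda-1)^{|n-p|}$ bookkeeping when $A$ is not square, and (iii) invoking the coincidence of the nonzero spectra of $A^TA$ and $AA^T$ so that $\S(A)$ is exactly the right index set. The genuinely delicate part of Theorem \ref{thm_conv_OGDA_xAy} comes afterwards, in controlling the geometry of the (complex, generally non-orthogonal) eigenspaces that this computation makes explicit.
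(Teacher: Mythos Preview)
Your argument is correct and follows essentially the same route as the paper: write out the block system $\Lambda Z=\lambda Z$, use the last two block rows to eliminate $(u',v')$, and reduce to an eigenvalue equation for $A^TA$ (the paper obtains the analogous equations for both $A^TA$ and $AA^T$ by multiplying through by $A^T$, resp.\ $A$, whereas you eliminate $u$ directly and then invoke the coincidence of nonzero spectra). Your separate treatment of $\lambda\in\{0,1\}$ and the explicit check that $v\neq 0$ when $\lambda\notin\{0,1,1/2\}$ are exactly the bookkeeping the paper also carries out, just organised slightly differently; the Schur-complement alternative is an extra that the paper does not give.
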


\begin{proof}
 For  $\lambda\in \C$ and $Z =(x^T,y^T,x'^T,y'^T)^T$ $\in \C^n\times \C^p\times \C^n\times \C^p$, we have: 
  \begin{align*}
  \Lambda Z = \lambda Z
  &\iff \left\{ \begin{matrix} x + 2\eta Ay - \eta Ay' = \lambda x\\ y -2\eta A^Tx + \eta A^Tx' = \lambda y \\ x = \lambda x' \\ y = \lambda y' \end{matrix}\right.\\
  &\iff \left\{ \begin{matrix} \lambda(1-\lambda)x' + (2\lambda -1)\eta Ay' = 0\\ \lambda(1-\lambda)y' - (2\lambda-1)\eta A^Tx' = 0 \\ x = \lambda x' \\ y = \lambda y' \end{matrix}\right.\\
  \end{align*}
\indent a) Assume $\Lambda Z=\lambda Z$ and $Z\neq 0$. Multiplying the first line by $(2\lambda-1)\eta A^T$ and the second line by $(2\lambda-1)\eta A$, we get:
  \begin{align}\left\{ \begin{matrix}\lambda^2(1-\lambda)^2y' + (2\lambda -1)^2\eta^2 A^TAy' = 0\\ \lambda^2(1-\lambda)^2x' + (2\lambda -1)^2\eta^2 AA^Tx' = 0\end{matrix}\right. \end{align}\\
Since $Z\neq 0$, we have $x'\neq 0$ or $y'\neq 0$, and $\lambda \neq 1/2$. This implies that $-\frac{\lambda^2(1-\lambda)^2}{(2\lambda-1)^2\eta^2}$ is an eigenvalue  of $A^TA$ or of $AA^T$. If we denote by $\mu$ such eigenvalue, then $\lambda\in S(\mu)$.  

b) Conversely, let $\mu$ be an eigenvalue of $A^TA$, and consider $\lambda\in S(\mu)$. Let $x'\neq 0$ be  such that $A A^T x'=\mu x'$ and consider $y'$ satisfying:\[\lambda(1- \lambda) y'=\eta (2\lambda-1) A^Tx'.\]
If  $\mu\neq 0$, then $\lambda\notin \{0,1\}$ and $y$ is uniquely defined; if $\mu=0$ then $A A^T x'=0$ so $A^Tx'=0$ and any $y'$ will do.
In each case, one can check that  $ \Lambda (\lambda x', \lambda y', x',y')^T =\lambda Z$, so that $\lambda$ is an eigenvalue of $\Lambda$. 
This also stands for $\mu$ an eigenvalue of $AA^T$.

a) and b) show that the set of eigenvalues of  $\Lambda$ is $\{ \lambda | \lambda \in S(\mu), \mu \in \S(A)\}$. \end{proof}

Combining Lemma \ref{lem_expression_Smu} and Lemma \ref{lem_eigenvalues_Lambda} yields; 

\begin{cor}
$$\max\{|\lambda|, \lambda \in \Sp(\Lambda)\backslash\{1\}\} <1 \; \Longleftrightarrow \forall \mu \in \S(A), \eta \sqrt{\mu} < {\frac{1}{\sqrt{3}}} \Longleftrightarrow \eta < {\frac{1}{\sqrt{3\mu_{\max}}}} $$
And if $\eta > {\frac{1}{\sqrt{3\mu_{\max}}}}$ we have $\max\{|\lambda|, \lambda \in \Sp(\Lambda) \}>1$, and for some intial conditions the OGDA diverges.
\end{cor}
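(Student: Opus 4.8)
The plan is to package the two preceding lemmas. By Lemma \ref{lem_eigenvalues_Lambda} we have $\Sp(\Lambda) = \bigcup_{\mu \in \S(A)} S(\mu)$, so it suffices to control $\max\{|\lambda| : \lambda \in S(\mu)\setminus\{1\}\}$ for each $\mu \in \S(A)$ separately and then take the maximum over $\mu$. First I would note that $1 \in S(\mu)$ if and only if $\mu \eta^2 = 0$, i.e. (since $\eta>0$) if and only if $\mu = 0$; and $S(0) = \{0,1\}$ whose only non-$1$ element is $0$, of modulus $<1$. Hence the zero eigenvalue of $AA^T$ creates no obstruction, while for $\mu>0$ we have $S(\mu)\cap\{1\}=\varnothing$, so that removing the eigenvalue $1$ from $\Sp(\Lambda)$ removes nothing else, and the quantity to control for $\mu>0$ is simply $\max\{|\lambda| : \lambda \in S(\mu)\}$.

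Next, fix $\mu>0$, set $X=\eta\sqrt{\mu}$, and invoke Lemma \ref{lem_expression_Smu}. In the cases $0<X<1/2$ and $X=1/2$ the lemma directly gives that every element of $S(\mu)$ has squared modulus $\leq \tfrac12(1+\sqrt{1-4X^2})<1$, resp. $=1/2$, hence modulus $<1$. In the case $X>1/2$, the largest squared modulus is $|\lambda_1(\mu)|^2 = 2X^2 + X\sqrt{4X^2-1}$, and the elementary inequality recorded just after Lemma \ref{lem_expression_Smu} gives $2X^2 + X\sqrt{4X^2-1}<1 \iff X<1/\sqrt{3}$. Combining the three cases: for $\mu>0$, all elements of $S(\mu)$ lie strictly inside the unit disk if and only if $\eta\sqrt{\mu}<1/\sqrt{3}$. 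Taking the conjunction over $\mu\in\S(A)$ — automatic for $\mu=0$ — yields the first equivalence; and since $\mu_{\max}=\max\S(A)$, the condition ``$\forall\mu\in\S(A),\ \eta\sqrt{\mu}<1/\sqrt{3}$'' is equivalent to $\eta\sqrt{\mu_{\max}}<1/\sqrt{3}$, i.e. $\eta<1/\sqrt{3\mu_{\max}}$, which is the second equivalence.

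For the last assertion, suppose $\eta>1/\sqrt{3\mu_{\max}}$, so $X:=\eta\sqrt{\mu_{\max}}>1/\sqrt{3}>1/2$. By Lemma \ref{lem_expression_Smu}, case 3, $\lambda_1(\mu_{\max})\in\Sp(\Lambda)$ with $|\lambda_1(\mu_{\max})|^2 = 2X^2 + X\sqrt{4X^2-1}>1$, so $\rho(\Lambda)>1$. To see that OGDA genuinely diverges for some real initialization, I would use that $\Lambda$ is a real matrix and $\lambda_1(\mu_{\max}),\ \overline{\lambda_1(\mu_{\max})}=\lambda_3(\mu_{\max})$ form a conjugate pair of eigenvalues of modulus $>1$; taking $Z_0$ to be the real (or imaginary) part of an associated complex eigenvector, the orbit $Z_t=\Lambda^t Z_0$ has norm growing at least like $|\lambda_1(\mu_{\max})|^t\to\infty$ — this is precisely the mechanism made quantitative in Lemma \ref{lemma_optimal_value}.

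All the computations here are elementary, and I do not anticipate any real obstacle: the corollary is a direct repackaging of Lemmas \ref{lem_expression_Smu} and \ref{lem_eigenvalues_Lambda}. The only point requiring a little care is the bookkeeping around $\mu=0$ and the observation that $1\notin S(\mu)$ for $\mu>0$, which is what makes ``removing $1$ from the spectrum'' harmless for the rest of the argument.
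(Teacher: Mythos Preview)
Your proof is correct and follows exactly the route the paper takes: the paper's proof is the one-line ``Combining Lemma \ref{lem_expression_Smu} and Lemma \ref{lem_eigenvalues_Lambda} yields'', and you have simply unpacked that combination carefully, including the bookkeeping around $\mu=0$ and the case split on $X$ from Lemma \ref{lem_expression_Smu}. Your treatment of the divergence part via the real part of a complex eigenvector is also in the same spirit as the paper's Lemma \ref{lemma_optimal_value}.
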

In the sequel of the proof of Theorem \ref{thm_conv_OGDA_xAy}, we always assume that $$\eta < {\frac{1}{\sqrt{3\mu_{\max}}}}.$$ 
Consider $\mu$ in $\S(A)$. Since $A^TA$ and $AA^T$ are positive semi-definite, $\mu$ is a  non-negative real number. 
If $\mu \neq 0$, $S(\mu)\subset \{ \lambda \in \C,  |\lambda| <1\}$ by Lemma  \ref{lem_expression_Smu}. If $\mu=0$, then $S(\mu)=\{0,1\}$. By Lemma  \ref{lem_eigenvalues_Lambda}, we have:
\[\Sp(\Lambda)\subset \{1\}\cup \{\lambda \in \C, | \lambda|<1\}.\]
Let us  define $\lambda_{\max}= \max\{|\lambda|, \lambda \in \Sp(\Lambda)\backslash\{1\}\}<1$. One can check that $\lambda_{\max}=\max\{\lambda_*,\lambda_{**}\}$ with 
$\lambda_{*}= {\bf{1}}_{\mu_{\min} \leq \frac{1}{4\eta^2}} \sqrt{\frac{1}{2}(1+\sqrt{1-4\eta^2 \mu_{\min}})}$ and $\lambda_{**} = {\bf{1}}_{\mu_{\max} \geq \frac{1}{4\eta^2}} \sqrt{2 \eta^2\mu_{\max} + \eta \sqrt{\mu_{\max}} \sqrt{4\eta^2 \mu_{\max}-1}}.$
Notice that if $\eta \leq {\frac{1}{2\sqrt{\mu_{\max}}}}$, $\lambda_{\max}=\lambda_*$.\\

We now describe the eigenspaces of $\Lambda$. For each $\lambda$ in $\C$, we write $E_\lambda=\{Z \in \C^n\times \C^p \times \C^n\times \C^p, \Lambda Z =\lambda Z\}$. The proof of Lemma \ref{lem_eigenvalues_Lambda} easily gives: 

\begin{lemma}
  \label{lem_expression_eigenspaces}
Let $\lambda$ be in $\Sp(\Lambda)$, and  $\mu\geq 0$ in $\S(A)$ uniquely defined by  $\lambda^2(1-\lambda)^2 +\mu \eta^2(1-2\lambda)^2=0$.

If $\lambda=0$, then $\mu=0\in \S(A)$ and $E_0=\{(x,y,x',y')\in \C^{n}\times \C^p \times \C^n\times \C^p, x=0, y=0, Ay'=0, A^Tx'=0\}$, so 
$\dim(E_0)= \dim(\Ker(A)) + \dim(\Ker(A^T)).$

If $\lambda=1$, then $\mu=0\in \S(A)$ and $E_1=\{(x,y,x',y')\in \C^{n}\times \C^p \times \C^n\times \C^p, x=x', y=y', Ay'=0, A^Tx'=0\}$, so 
$\dim(E_1)= \dim(\Ker(A)) + \dim(\Ker(A^T)).$

If $\lambda \notin\{0,1\}$, then $\mu>0$ and 
 \[E_\lambda = \left\{(x,y,x',y')\in \C^{n}\times \C^p \times \C^n\times \C^p,  x = \lambda x' , y=\lambda y', A^TAy'= \mu y' , x'= \frac{(1-2\lambda)\eta }{\lambda(1-\lambda)}Ay'  \right\},\] 
 so 
 $\dim(E_\lambda)= \dim(\Ker(A^TA-\mu I)).$
\end{lemma}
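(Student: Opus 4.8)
The plan is to reuse verbatim the equivalence derived in the proof of Lemma~\ref{lem_eigenvalues_Lambda}: for $\lambda\in\C$ and $Z=(x^T,y^T,x'^T,y'^T)^T$, the relation $\Lambda Z=\lambda Z$ is equivalent to the system
\[\lambda(1-\lambda)x' + (2\lambda-1)\eta Ay' = 0,\qquad \lambda(1-\lambda)y' - (2\lambda-1)\eta A^Tx' = 0,\qquad x=\lambda x',\qquad y=\lambda y'.\]
First I would treat the boundary cases. If $\lambda=0$ the first two equations reduce to $Ay'=0$ and $A^Tx'=0$ while $x=y=0$; if $\lambda=1$ they again reduce to $Ay'=0$ and $A^Tx'=0$ while now $x=x'$ and $y=y'$. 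In both cases the stated description of $E_\lambda$ is immediate, and since the only remaining constraints are $y'\in\Ker(A)$ and $x'\in\Ker(A^T)$, we get $\dim E_\lambda=\dim\Ker(A)+\dim\Ker(A^T)$. I would also note that $\lambda\in\{0,1\}$ forces $\mu=0$ by plugging into $\lambda^2(1-\lambda)^2+\mu\eta^2(1-2\lambda)^2=0$ and using $\eta>0$, and that this $\mu=0$ really belongs to $\S(A)$ because $\lambda\in\Sp(\Lambda)$ means $E_\lambda\neq\{0\}$, so $A$ or $A^T$ has nontrivial kernel.

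For $\lambda\notin\{0,1\}$ I would first observe $\lambda\neq 1/2$ (otherwise the defining relation for $\mu$ would read $1/16=0$) and $\mu\neq 0$, so $\lambda(1-\lambda)\neq 0$ and $2\lambda-1\neq 0$. The first equation of the system then gives $x'=\frac{(1-2\lambda)\eta}{\lambda(1-\lambda)}Ay'$; substituting into the second and multiplying by $\lambda(1-\lambda)$ turns it into $\lambda^2(1-\lambda)^2y'+(2\lambda-1)^2\eta^2A^TAy'=0$, i.e. $A^TAy'=\mu y'$ with $\mu=-\lambda^2(1-\lambda)^2/((2\lambda-1)^2\eta^2)$, which is exactly the $\mu$ associated to $\lambda$. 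This shows every $Z\in E_\lambda$ has the announced form. Conversely, given $y'\in\Ker(A^TA-\mu I)$, I would set $x'=\frac{(1-2\lambda)\eta}{\lambda(1-\lambda)}Ay'$, $x=\lambda x'$, $y=\lambda y'$ and check the four equations; the only nontrivial one is the second, where $(2\lambda-1)\eta A^Tx'=-\frac{(2\lambda-1)^2\eta^2}{\lambda(1-\lambda)}\mu y'=\lambda(1-\lambda)y'$ after inserting the value of $\mu$, so it holds.

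Consequently the linear map $y'\mapsto Z=(\lambda x',\lambda y',x',y')$ just described is injective (its last block is $y'$) and, by the two implications above, has image exactly $E_\lambda$; hence $\dim E_\lambda=\dim\Ker(A^TA-\mu I)$. I do not expect a genuine obstacle here, since essentially everything is already contained in the computation of the proof of Lemma~\ref{lem_eigenvalues_Lambda}; the only point needing a little care is checking that the candidate vector built from $y'$ truly satisfies the second equation, i.e. carrying the exact value (and sign) of $\mu$ correctly through the substitution.
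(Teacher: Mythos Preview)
Your proposal is correct and follows exactly the approach the paper indicates: the paper simply states that the proof of Lemma~\ref{lem_eigenvalues_Lambda} ``easily gives'' the result, and you have carefully written out precisely those computations, treating the cases $\lambda\in\{0,1\}$ and $\lambda\notin\{0,1\}$ separately and verifying both containments for $E_\lambda$ in the latter case. There is nothing to add.
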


 We can now sum  the dimension of the eigenspaces.
 \begin {lemma}\label{lemA6}
 $$ \sum_{\lambda \in Sp(\Lambda)} \dim(E_\lambda)
 =  2(n+p) -2  \dim(\Ker(A^TA-\frac{1}{4\eta^2} I)).$$
 \end{lemma}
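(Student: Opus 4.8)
The plan is to compute $\sum_{\lambda \in \Sp(\Lambda)} \dim(E_\lambda)$ by partitioning $\Sp(\Lambda)$ according to the underlying eigenvalue $\mu \in \S(A)$ via Lemma \ref{lem_eigenvalues_Lambda}, and then summing the dimension formulas from Lemma \ref{lem_expression_eigenspaces}. Recall that $\S(A) = \Sp(A^TA) \cup \Sp(AA^T)$, and that $A^TA$ and $AA^T$ share the same nonzero eigenvalues; only the multiplicity of $0$ may differ. The key quantitative input is the standard rank-nullity bookkeeping: if $r = \rank(A)$, then the nonzero eigenvalues of $A^TA$ (equivalently of $AA^T$), counted with multiplicity, number exactly $r$, while $\dim\Ker(A^TA) = p - r = \dim\Ker(A)$ and $\dim\Ker(AA^T) = n - r = \dim\Ker(A^T)$.

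First I would handle the eigenvalue $\mu = 0$, which by Lemma \ref{lem_eigenvalues_Lambda} contributes $S(0) = \{0,1\}$ to $\Sp(\Lambda)$. By Lemma \ref{lem_expression_eigenspaces}, $\dim(E_0) = \dim(E_1) = \dim\Ker(A) + \dim\Ker(A^T) = (p-r) + (n-r)$, so these two eigenvalues together contribute $2(p-r) + 2(n-r) = 2(n+p) - 4r$. Next I would handle a fixed nonzero $\mu \in \S(A)$: by Lemma \ref{lem_expression_Smu}, $S(\mu)$ consists of the (generically four, but possibly two when $\eta\sqrt\mu = 1/2$) roots $\lambda$, and for each such $\lambda \notin \{0,1\}$, Lemma \ref{lem_expression_eigenspaces} gives $\dim(E_\lambda) = \dim\Ker(A^TA - \mu I)$. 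The subtlety is the multiplicity of roots in $S(\mu)$: when $\eta\sqrt\mu \neq 1/2$ there are four distinct $\lambda$'s, each contributing $\dim\Ker(A^TA - \mu I)$; when $\eta\sqrt\mu = 1/2$, i.e. $\mu = \frac{1}{4\eta^2}$, there are only two distinct $\lambda$'s ($\lambda = \tfrac12(1\pm i)$). So summing over all nonzero $\mu \in \S(A)$ and all $\lambda \in S(\mu)$, the total is $4 \sum_{\mu \neq 0} \dim\Ker(A^TA - \mu I)$ minus a correction of $2\dim\Ker(A^TA - \tfrac{1}{4\eta^2}I)$ if $\tfrac{1}{4\eta^2}$ happens to be an eigenvalue (this correction term being $0$ otherwise, consistent with the indicator-free statement). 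Since $\sum_{\mu \neq 0,\ \mu \in \Sp(A^TA)} \dim\Ker(A^TA - \mu I) = r$, the nonzero-$\mu$ part contributes $4r - 2\dim\Ker(A^TA - \tfrac{1}{4\eta^2}I)$.

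Adding the two contributions: $\sum_{\lambda \in \Sp(\Lambda)} \dim(E_\lambda) = \big(2(n+p) - 4r\big) + \big(4r - 2\dim\Ker(A^TA - \tfrac{1}{4\eta^2}I)\big) = 2(n+p) - 2\dim\Ker(A^TA - \tfrac{1}{4\eta^2}I)$, which is the claimed identity. The main obstacle — really the only place where care is needed — is the double-counting/under-counting arising from two sources simultaneously: (i) the same nonzero $\mu$ may appear in both $\Sp(A^TA)$ and $\Sp(AA^T)$, so one must be clear that $\S(A)$ is a set and that Lemma \ref{lem_expression_eigenspaces} already phrases $\dim(E_\lambda)$ uniformly in terms of $\Ker(A^TA - \mu I)$ regardless of which matrix $\mu$ ``came from''; and (ii) the collision $\eta\sqrt\mu = 1/2$ collapsing four roots to two. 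A clean way to organize the argument is to note that $\dim\Ker(A^TA-\mu I)$ equals $\dim\Ker(AA^T - \mu I)$ for $\mu \neq 0$ (same nonzero spectrum with multiplicity), so there is genuinely a single well-defined number $m_\mu := \dim\Ker(A^TA-\mu I)$ attached to each nonzero $\mu \in \S(A)$, with $\sum_{\mu > 0} m_\mu = r$, and then the whole computation reduces to counting $|S(\mu)| \cdot m_\mu$ with $|S(\mu)| = 4$ except $|S(1/(4\eta^2))| = 2$.
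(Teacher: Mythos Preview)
Your proof is correct and follows essentially the same approach as the paper: both partition $\Sp(\Lambda)$ via $\mu \in \S(A)$, use the dimension formulas from Lemma~\ref{lem_expression_eigenspaces}, account for $|S(\mu)|=2$ precisely when $\mu=\tfrac{1}{4\eta^2}$, and finish with rank--nullity. Your additional remarks on why the $S(\mu)$ are disjoint and why $\dim\Ker(A^TA-\mu I)=\dim\Ker(AA^T-\mu I)$ for $\mu\neq 0$ are helpful clarifications, but the core argument is identical to the paper's.
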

 
 \begin{proof}
Recall that $\rank(A^TA)=\rank(A)=\rank(A^T)$  and, $A^TA$ being positive semi-definite, $\rank(A^TA)= \sum_{\mu >0} \dim(\Ker(A^TA-\mu I))$.  Recall also from Lemma  \ref{lem_expression_Smu} that $S(\mu)$ has 2 elements if $\mu=\frac{1}{4\eta^2}$.
$$
  \sum_{\lambda \in Sp(\Lambda)} \dim(E_\lambda)
 = \dim(E_0) + \dim(E_1) +  \sum_{\lambda \in Sp(\Lambda), \, \lambda\notin \{0,1\}} \dim(E_\lambda) $$
  $$= 2\dim(\Ker(A))+2\dim(\Ker(A^T)) +2 \dim(\Ker(A^TA-\frac{1}{4\eta^2} I))+ \sum_{\mu \in Sp(A^TA), \, \mu \neq 0, \frac{1}{4\eta^2}} 4\dim(\Ker(A^TA-\mu I)) $$
  $$  = 2\dim(\Ker(A))+2\dim(\Ker(A^T))+ 4\rank(A^TA) -2  \dim(\Ker(A^TA-\frac{1}{4\eta^2} I)) $$
$$  = 2(\rank(A) + \dim(\Ker(A))) + 2(\rank(A^T) + \dim(\Ker(A^T))) -2  \dim(\Ker(A^TA-\frac{1}{4\eta^2} I)) $$
 $$ = 2(n+p) -2  \dim(\Ker(A^TA-\frac{1}{4\eta^2} I)).$$
\end{proof}

We can already notice that $\frac{1}{4\eta^2}\notin\S(A)$ is equivalent to  $ \sum_{\lambda \in Sp(\Lambda)} \dim(E_\lambda)=2(n+p)$, i.e. is equivalent to the property that  $\Lambda$ is diagonalizable in $\C$.  The next lemma will be used to characterize the limit of the OGDA.

\begin{lemma}
  \label{lem_ortho}
  If $y$ is  an eigenvector of $A^TA$ associated to a nonzero eigenvalue, then $y$ is orthogonal to $\Ker(A)$. Similarly if $x$ is  an eigenvector of $AA^T$ associated to a nonzero eigenvalue, then $x$ is orthogonal to $\Ker(A^T)$.
\end{lemma}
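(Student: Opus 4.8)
The plan is to exploit the self-adjointness of $A^TA$ (and of $AA^T$) with respect to the Hermitian inner product $\langle u,v\rangle=u^T\overline{v}$. The only thing to observe first is that $\Ker(A)$ sits inside the eigenspace of $A^TA$ for the eigenvalue $0$: indeed, $Az=0$ immediately gives $A^TAz=0$. So the claim reduces to the standard fact that eigenvectors of a self-adjoint operator attached to distinct eigenvalues are orthogonal. I would prove exactly the instance needed rather than invoke a black box: given $z\in\Ker(A)$ and $y$ with $A^TAy=\mu y$ and $\mu\neq 0$, compute
\[
\mu\,\langle y,z\rangle=\langle A^TAy,\,z\rangle=\langle Ay,\,Az\rangle=\langle Ay,\,0\rangle=0,
\]
where the middle equality uses that $A$ is a \emph{real} matrix, so $A^T$ is the adjoint of $A$ for $\langle\cdot,\cdot\rangle$ even when the vectors are complex (concretely, $\langle A^TAy,z\rangle=y^TA^TA\overline{z}=(Ay)^T\overline{A z}=\langle Ay,Az\rangle$ since $\overline{A}=A$). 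Since $\mu\neq0$, this yields $\langle y,z\rangle=0$, i.e. $y\perp\Ker(A)$.

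The second assertion is obtained by the identical argument applied with $A$ replaced by $A^T$: if $AA^Tx=\mu x$ with $\mu\neq0$ and $w\in\Ker(A^T)$, then $\mu\langle x,w\rangle=\langle AA^Tx,w\rangle=\langle A^Tx,A^Tw\rangle=0$. I do not expect any real obstacle here; the single point requiring a little care is keeping track of complex conjugates when the eigenvector $y$ (or $x$) is non-real, which is precisely what the identity $\langle A^TAy,z\rangle=\langle Ay,Az\rangle$ for real $A$ settles.
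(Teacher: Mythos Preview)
Your proof is correct and follows essentially the same argument as the paper: both pick $z\in\Ker(A)$ and compute $\mu\langle y,z\rangle=\langle A^TAy,z\rangle=\langle Ay,Az\rangle=0$, concluding from $\mu\neq0$. Your additional remark about $A$ being real so that $\overline{A}=A$ (ensuring the adjoint identity holds for the Hermitian inner product even with complex eigenvectors) is a welcome clarification that the paper leaves implicit.
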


\begin{proof}
  Let $\hat{y}$ be a vector in the kernel of $A$, and let $\mu \neq 0$ be the eigenvalue of $A^TA$ associated with $y$.
  Then:
  \[ \mu \langle y, \hat{y} \rangle = \langle  \mu y, \hat{y}\rangle = \langle A^TA y , \hat{y}\rangle = \langle Ay, A\hat{y} \rangle = \langle Ay, 0 \rangle = 0\]
  Hence, $\langle y, \hat{y} \rangle = 0$ for any $\hat{y}$ in $\Ker(A)$.
 The proof for eigenvectors of $AA^T$ is similar. 
\end{proof}

\subsubsection{Convergence  to Nash equilibria if  $\frac{1}{4\eta^2}\notin \S(A)$} \label{CVOK}

We assume here that $\frac{1}{4\eta^2}\notin \S(A)$, with $0<\eta<  {\frac{1}{\sqrt{3\mu_{\max}}}}$.
Here $\Ker(A^TA-\frac{1}{4\eta^2} I)$ is empty, so $\sum_{\lambda \in Sp(\Lambda)} \dim(E_\lambda)= 2(n+p)$ and 
$\Lambda$ is diagonalizable. Since $\max\{|\lambda|, \lambda \in \Sp(\Lambda)\backslash\{1\}\} <1$, the matrix $\Lambda^t$ converges as $t\to \infty$, and the convergence is exponential with ratio 
$\lambda_{\max}= \max\{|\lambda|, \lambda \in \Sp(\Lambda)\backslash\{1\}\}.$\\
 
Consider an initial condition $Z_0=(x_0,y_0,x_{-1},y_{-1})$.  $Z_0$ can  be uniquely written :
\[Z_0=\sum_{\lambda \in \Sp(\Lambda)} z_{ \lambda}, \; {\rm with \;} z_{\lambda}\in E_\lambda \; {\rm for \; all  \;} \lambda.\]
For all $t\geq 0$, $Z_t=(x_t, y_t,x_{t-1}, y_{t-1})=\Lambda^t Z_0=\sum_{\lambda \in \Sp(\Lambda)}\lambda^t z_{ \lambda}$ and $Z_t \xrightarrow[t\to \infty]{} Z_\infty:=z_{1}.$ We have obtained  the convergence of $Z_t$ to the projection of $Z_0$ onto  $E_1=\Ker(\Lambda-I)$ along $\bigoplus_{\lambda \neq 1} \Ker(\Lambda-\lambda I)$.
Since $Z_\infty \in E_1$,  $(x_t)_t$ converges to a limit $x_{\infty}$ in $\Ker(A^T)$ and $(y_t)_t$ converges to a limit $y_{\infty}$ in $\Ker(A)$. Moreover  there exists a constant $C'$,  only  depending on $A$ and $\eta$ through the matrix $\Lambda$, such that for all $t\geq 0$, $\|\Lambda
 ^t-\Lambda^{\infty}\|\leq C' \lambda_{\max}^t$. And  for all $t\geq 0$,
$ \|(x_t,y_t)-(x_\infty,y_\infty)\|\leq  C'  \lambda_{\max}^t.$

Now, let us write $z_\lambda$ as $({\hat{x}_\lambda}, {\hat{y}_\lambda}, {\hat{x}'}_\lambda, {\hat{y}'_\lambda})\in \C^n \times \C^p \times \C^n \times \C^p$ for each $\lambda$ in $\S(A)$.
We have $y_0= \sum_{\lambda \in \Sp(\Lambda)} \hat{y}_\lambda$, and $(y_t)_t$ converges to $y_\infty= \hat{y}_1\in \Ker(A)$. $\hat{y}_0=0$ and $\sum_{\lambda \neq \{0, 1\}} \hat{y}_\lambda$ is orthogonal to $\Ker(A)$ thanks to Lemma \ref{lem_ortho}, so we obtain 
that $\hat{y}_1 = y_\infty$ is the orthogonal projection of $y_0$ onto $\Ker(A)$.
In the same way, one can prove that $x_\infty$ is the orthogonal projection of $x_0$ onto $\Ker(A^T)$.

\subsubsection{Convergence  to Nash equilibria if  $\frac{1}{4\eta^2}\in \S(A)$}
We assume here that $\frac{1}{4\eta^2}\in \S(A)$, with $  {\frac{1}{2\sqrt{\mu_{\max}}}}\leq \eta<  {\frac{1}{\sqrt{3\mu_{\max}}}}$. 

  Consider  $\mu=\frac{1}{4\eta^2}>0$. By assumption, $\mu$ is an eigenvalue of $AA^T$ and $A^TA$, and $S(\mu)=\{\lambda_1, \lambda_2\}$ with $\lambda_1=\frac12(1+i)$ and $\lambda_2=\frac12(1-i)$. We know that $\dim(\Ker(\Lambda-\lambda_1I))= \dim(\Ker(\Lambda-\lambda_2I))=\dim(\Ker(A^TA-\mu I)), \;\; {\rm{and}}$
  $ \sum_{\lambda \in Sp(\Lambda)} \dim(E_\lambda)=2(n+p) -2  \dim(\Ker(A^TA-\frac{1}{4\eta^2} I))< 2(n+p),$
  so $\Lambda$ is not diagonalizable in $\C$.\\

 If $0\notin \S(A)$, then by Gelfand's theorem the OGDA converges to 0, and $\Ker(A)=\{0\}$.  To fix ideas we  now assume here that $0\in \S(A)$.\\

 The eigenvalues of $\Lambda$ are 1 and eigenvalues with modulus $<1$. 
 
 \begin{lemma} \label{lem123} The  algebraic multiplicty of $\lambda=1$ in the characteristic  polynomial of $\Lambda$ is $\dim E_1$. \end{lemma}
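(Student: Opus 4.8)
The plan is to prove that $\lambda=1$ is a \emph{semisimple} eigenvalue of $\Lambda$, i.e.\ that it carries no Jordan block of size $\geq 2$; this is exactly the assertion that the algebraic and geometric multiplicities of $\lambda=1$ coincide. Since the generalized eigenspace $\bigcup_{m\geq 1}\Ker((\Lambda-I)^m)$ has dimension equal to the algebraic multiplicity of $\lambda=1$ and this increasing union stabilizes, it suffices to show $\Ker((\Lambda-I)^2)=\Ker(\Lambda-I)$; then $\Ker((\Lambda-I)^m)=\Ker(\Lambda-I)=E_1$ for every $m\geq 1$, which gives the claim. As $\lambda=1$ is real and $\Lambda$ is a real matrix, I would argue with real vectors.

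Concretely, I would take $Z=(x,y,x',y')\in\R^n\times\R^p\times\R^n\times\R^p$ with $(\Lambda-I)^2Z=0$ and put $W:=(\Lambda-I)Z$, which lies in $\Ker(\Lambda-I)=E_1$. By the description of $E_1$ in Lemma~\ref{lem_expression_eigenspaces}, $W=(w_1,w_2,w_1,w_2)$ with $A^Tw_1=0$ and $Aw_2=0$. Reading off the first two blocks of the equation $(\Lambda-I)Z=W$ yields
\[ w_1=\eta\,A\,(2y-y')\in\Img(A),\qquad w_2=\eta\,A^T(x'-2x)\in\Img(A^T). \]

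The heart of the argument is then the orthogonal decompositions $\R^n=\Img(A)\oplus\Ker(A^T)$ and $\R^p=\Img(A^T)\oplus\Ker(A)$: since $w_1\in\Img(A)\cap\Ker(A^T)=\Img(A)\cap\Img(A)^\perp$ we get $w_1=0$, and symmetrically $w_2=0$, hence $W=0$ and $Z\in\Ker(\Lambda-I)$. This establishes $\Ker((\Lambda-I)^2)\subseteq\Ker(\Lambda-I)$, the reverse inclusion being trivial, and completes the proof. I do not expect a genuine obstacle here: the only care needed is in correctly extracting the two membership relations above from $(\Lambda-I)^2Z=0$ and then invoking $\Img(A)\perp\Ker(A^T)$; the passage from ``$\lambda=1$ semisimple'' to the multiplicity statement is a standard fact of linear algebra.
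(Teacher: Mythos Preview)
Your proposal is correct and follows essentially the same approach as the paper: both arguments show $\Ker((\Lambda-I)^2)=\Ker(\Lambda-I)$ by writing $W=(\Lambda-I)Z\in E_1$, expanding the block components, and exploiting the orthogonality $\Img(A)\perp\Ker(A^T)$ (the paper phrases this as $A^TAy'=0\Rightarrow Ay'=0$, which is the same fact). Your version is in fact slightly more streamlined, since you read off $w_1\in\Img(A)\cap\Ker(A^T)$ directly from the first two blocks without first passing through the auxiliary equations $Ay=Ay'$ and $A^Tx=A^Tx'$ that the paper derives from the third and fourth blocks.
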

 
 \noindent {\bf Proof of Lemma \ref{lem123} }
  Let $Z=(x,y,x',y')\in \C^{n}\times \C^p \times \C^n\times \C^p$ be such that $(\Lambda -I)^2 Z=0$. Then $(\Lambda-I)Z\in E_1$, i.e.
  $2 \eta Ay-\eta Ay'=x-x'$, $ -2\eta A^Tx+\eta A^Tx'=y-y'$, $A^T(x-x')=0$ and $A(y-y')=0.$
  Then $Ay=Ay', A^Tx=A^Tx', \eta Ay'=x-x'$ and $-\eta A^Tx'=y-y'$. It implies $A^TA y'=0$, which  is equivalent to $Ay'=0$. Similarly $AA^Tx'=0$, so $A^Tx'=0$ and finally $Z\in E_1$. We have shown that $\Ker(\Lambda-I)^2= \Ker(\Lambda-I)$, so for each $l\geq 2$, $\Ker(\Lambda-I)^l= \Ker(\Lambda-I)$, and the algebraic multiplicty of $\lambda=1$  is $\dim E_1$. \hfill $\Box$
  
  \vspace{0.5cm}

  Looking at the Jordan normal form of $\Lambda$, we can conclude that $\Lambda^t$ converges to a projection matrix $\Lambda^\infty$.  For any initial condition $Z_0=(x_0,y_0,x_{-1},y_{-1})$, $Z_t$ converges to an element of $E_1$, so $(x_t)_t$ converges to a limit $x_{\infty}$ in $\Ker(A^T)$ and $(y_t)_t$ converges to a limit $y_{\infty}$ in $\Ker(A)$. Moreover for every $\lambda>\lambda_{\max}$ there exists a constant $C'$ such that for all $t\geq 0$, $\|\Lambda
 ^t-\Lambda^{\infty}\|\leq C' \lambda^t$. This implies\footnote{This also holds if $0\notin \S(A)$.} 
$ \|(x_t,y_t)-(x_\infty,y_\infty)\|\leq  C'  \lambda^t$ for all $t$.  To conclude, it remains to show that $x_\infty$, resp. $y_\infty$,  is the orthogonal projection of $x_0$, resp. $y_0$,  onto $\Ker(A^T)$.

  \begin{lemma} \label{lem124}
  $$\dim(\Ker(\Lambda-\lambda_1I)^2)= \dim(\Ker(\Lambda-\lambda_2I)^2)=2\dim(\Ker(A^TA-\mu I)).$$
 Moreover, if  $(x,y,x',y')\in  (\Ker(\Lambda-\lambda_1I))^2$, then $x$ and $x'$ are in $(\Ker(A^T))^\perp$ and $y$ and $y'$ are in $(\Ker(A))^\perp$.
  \end{lemma}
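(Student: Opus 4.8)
The plan is to reduce $\Lambda$ to block--diagonal form via the singular value decomposition of $A$ and then read off the Jordan structure above $\lambda_1=\tfrac12(1+i)$ one block at a time. First I would write $A=U\Sigma V^T$ with $U\in O(n)$, $V\in O(p)$ orthogonal and $\Sigma\in\R^{n\times p}$ carrying the singular values $\sigma_1,\dots,\sigma_{\min(n,p)}\ge 0$ on its main diagonal. Conjugating $\Lambda$ by the orthogonal matrix $P=\mathrm{diag}(U,V,U,V)$ replaces $A$ by $\Sigma$ in the definition of $\Lambda$; after grouping the $i$-th coordinate of the $x$-block with the $i$-th coordinate of the $y$-block and their two delayed copies, the conjugated matrix $\tilde\Lambda$ becomes a direct sum of $4\times4$ blocks
\[
M_\sigma=\begin{pmatrix}1 & 2\eta\sigma & 0 & -\eta\sigma\\ -2\eta\sigma & 1 & \eta\sigma & 0\\ 1 & 0 & 0 & 0\\ 0 & 1 & 0 & 0\end{pmatrix}\qquad(\sigma=\sigma_i),
\]
plus, when $n\ne p$ or when some $\sigma_i=0$, a few $2\times2$ or $4\times4$ blocks coming from the surplus coordinates (those lying in $\Ker(A)$ or $\Ker(A^T)$), whose eigenvalues are $0$ and $1$ only. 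Exactly as in the $A=(1)$ computation, $\det(M_\sigma-\lambda I)=\lambda^2(1-\lambda)^2+\sigma^2\eta^2(1-2\lambda)^2$.

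Next I would analyse the critical blocks. Fix $\mu=\tfrac1{4\eta^2}$. For a block with $\sigma^2=\mu$, Lemma~\ref{lem_expression_Smu}(2) gives $S(\mu)=\{\lambda_1,\lambda_2\}$ with $\lambda_1=\tfrac12(1+i)$, $\lambda_2=\tfrac12(1-i)$; since $\det(M_\sigma-\lambda I)$ is monic of degree $4$ with only these two roots, the real-coefficient symmetry forces $\det(M_\sigma-\lambda I)=(\lambda-\lambda_1)^2(\lambda-\lambda_2)^2$. On the other hand Lemma~\ref{lem_expression_eigenspaces}, applied to $\tilde\Lambda$, shows that the $\lambda_1$-eigenspace of such a block is parametrised bijectively by $y'\in\Ker(\sigma^2-\mu)=\C$, hence is one-dimensional. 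So the generalized eigenspace of $M_\sigma$ at $\lambda_1$ has dimension $2$ while its eigenspace has dimension $1$; therefore $M_\sigma$ has a single size-$2$ Jordan block at $\lambda_1$, so $(M_\sigma-\lambda_1 I)^2$ vanishes on that $2$-dimensional generalized eigenspace and $\Ker(M_\sigma-\lambda_1 I)^2$ has dimension $2$. The same holds at $\lambda_2=\overline{\lambda_1}$ by conjugation, while blocks with $\sigma^2\ne\mu$ and the surplus blocks do not have $\lambda_1$ (or $\lambda_2$) in their spectrum and contribute nothing.

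Finally I would assemble the pieces. The number of blocks $M_{\sigma_i}$ with $\sigma_i^2=\mu$ is the multiplicity of $\mu$ as an eigenvalue of $A^TA$, i.e. $m:=\dim\Ker(A^TA-\mu I)$. Summing the previous step over blocks gives $\dim\Ker(\tilde\Lambda-\lambda_1 I)^2=2m$, hence $\dim\Ker(\Lambda-\lambda_1 I)^2=2\dim\Ker(A^TA-\mu I)$ after undoing the conjugation by $P$, and likewise for $\lambda_2$. For the ``moreover'' part: if $Z=(x,y,x',y')\in\Ker(\Lambda-\lambda_1 I)^2$, then $P^{-1}Z\in\Ker(\tilde\Lambda-\lambda_1 I)^2$ is supported only on the coordinates $i$ with $\sigma_i^2=\mu$; translating back, $x$ and $x'$ lie in the span of the left singular vectors with $\sigma_i^2=\mu$, which is $\Ker(AA^T-\mu I)$, and $y,y'$ lie in $\Ker(A^TA-\mu I)$. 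Since $\mu=\tfrac1{4\eta^2}\ne0$, Lemma~\ref{lem_ortho} then yields $x,x'\in(\Ker(A^T))^\perp$ and $y,y'\in(\Ker(A))^\perp$.

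The only slightly delicate point is the bookkeeping in the first step — pinning down the precise shape of the $4\times4$ blocks and checking that every surplus block has spectrum contained in $\{0,1\}$; once that is settled, the remaining steps are immediate. As an alternative avoiding the SVD, one can argue directly in the spirit of Lemma~\ref{lem123}: for $Z\in\Ker(\Lambda-\lambda_1 I)^2$ write $(\Lambda-\lambda_1 I)Z=W$ with $W\in E_{\lambda_1}$, use the description of $E_{\lambda_1}$ from Lemma~\ref{lem_expression_eigenspaces} to eliminate $x'$ and $y'$, and solve the resulting linear system for $(x,y)$; this recovers both the dimension count and the orthogonality statement, at the cost of a heavier computation.
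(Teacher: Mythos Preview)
Your proof is correct and takes a genuinely different route from the paper's. The paper argues by direct computation: it expands the relation $(\Lambda-\lambda_1 I)^2Z=0$ into four scalar equations, observes that these force both $2x-x'$ and $x-\lambda_1 x'$ (and the analogous combinations in $y$) to lie in $\Ker(AA^T-\mu I)$, and then builds an explicit linear isomorphism between $\Ker(\Lambda-\lambda_1 I)^2$ and $\Ker(AA^T-\mu I)\times\Ker(AA^T-\mu I)$ by prescribing $x_1^*=2x-x'$ and $x_2^*=x-\lambda_1 x'$ and solving back. The orthogonality claim then follows from Lemma~\ref{lem_ortho} applied to $x_1^*$ and $x_2^*$.

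Your SVD approach replaces this computation by a structural reduction: conjugating by $P=\mathrm{diag}(U,V,U,V)$ block--diagonalises $\Lambda$ into copies of the $4\times4$ matrix $M_\sigma$, so the whole lemma reduces to the scalar case $A=(\sigma)$, where the Jordan structure of $M_\sigma$ at $\lambda_1$ is read off from its characteristic polynomial $(\lambda-\lambda_1)^2(\lambda-\lambda_2)^2$ together with the one--dimensionality of the eigenspace from Lemma~\ref{lem_expression_eigenspaces}. This buys you a cleaner dimension count with essentially no linear--algebra bookkeeping, and your ``moreover'' part actually yields the sharper inclusion $x,x'\in\Ker(AA^T-\mu I)$, $y,y'\in\Ker(A^TA-\mu I)$ (the paper's argument gives this too but only states the weaker orthogonality conclusion). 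What the paper's direct approach buys, by contrast, is an explicit coordinate description of $\Ker(\Lambda-\lambda_1 I)^2$ without invoking the SVD, and it is exactly the ``alternative'' you sketch in your last paragraph.
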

 
 \noindent {\bf Proof of Lemma \ref{lem124} } We write  the proof for $\lambda_1$ only. Consider $Z=(x,y,x',y')\in \C^{n}\times \C^p \times \C^n\times \C^p$. Computations show that $Z\in 
  (\Ker(\Lambda-\lambda_1I))^2$ if and only if:
 $$ \left\{ \begin{array} {ccc} 
  2 \eta A(2y-y')& =&i(2x-x') \\
 -2 \eta A^T(2x-x')& =&i(2y-y') \\
  \eta A(y-\lambda_1y')& =&\frac{i}{2} (x-\lambda_1x') \\
  \eta A^T(x-\lambda_1x')& =&-\frac{i}{2} (y-\lambda_1y').
 \end{array}\right.$$
 This implies $AA^T(2x-x')=\mu(2x-x')$ and $AA^T( x-\lambda_1x')=\mu(x-\lambda_1x')$. \\

 Conversely, choose independently any 2 vectors $x_1^*$ and $x_2^*$ in $\Ker(AA^T-\mu I)$, and define uniquely $x$, $x'$, $y$ and $y'$ by:
  $$ \left\{ \begin{array} {ccc} 
  2x-x' &=& x_1^*\\
 x-\lambda_1 x'& =& x_2^*\\
i(2y-y') & =& -2 \eta A^T(x_1^*) \\
  -\frac{i}{2} (y-\lambda_1y') & =& \eta A^T(x_2^*).
 \end{array}\right.$$
 Then one can check that $(x,y,x',y')$ belongs to $\Ker(\Lambda-\lambda_1 I)^2$.
 
 $\Ker(A^TA-\mu I)\subset \Ker(A)^\perp$ by Lemma A.6, so $x_1^*$ and $x_2^*$ belong to $\Ker(A)^\perp$. So $x$ and $x'$ also belong to the vector subspace  $\Ker(A^T)^\perp$, and  similarly $y$ and $y'$ belong to $\Ker(A)^\perp$.
 
 The map $(x_1^*,x_2^*)\mapsto (x,y,x',y')$ is linear one-to-one onto  $\Ker(AA^T-\mu I)\times \Ker(A^TA-\mu I)$, so  $\dim(\Ker(\Lambda-\lambda_1I)^2)= 2\dim(\Ker(A^TA-\mu I)).$ \hfill $\Box$
 
 \vspace{0.5cm}
 
Looking at the dimensions, we obtain that the algebraic dimension of both eigenvalues  $\lambda_1$ and $\lambda_2$ of $\Lambda$  is  $2\dim(\Ker(A^TA-\frac{1}{4\eta^2} I))$, and for $i=1,2$,  the characteristic subspace of $\lambda_i$ is $(\Ker(\Lambda-\lambda_iI)^2)$ for $i=1,2$.  Using the spectral decomposition of $\Lambda$, the initial vector 
 $Z_0$ can now be uniquely written:
$$Z_0=\sum_{\lambda \in \Sp(\Lambda)} z_{ \lambda},$$
  with  $z_{\lambda}\in E_\lambda$ for $\lambda\notin \{\lambda_1, \lambda_2\}$ and $z_{\lambda_i}\in \Ker(\Lambda-\lambda_iI)^2$ for $i=1,2$.
  
 $Z_t=\Lambda^t Z_0 \xrightarrow[t\to \infty]{} Z_\infty:=z_{1},$ so $(x_t)_t$ converges to the first component $\tilde{x}_1\in \Ker(A^T)$ of $z_1$. By  lemmas \ref{lem_ortho} and \ref{lem124}, $x_0-\tilde{x}_1$ belong to $(\Ker(A^T))^\perp$, so $\tilde{x}_1$ is the orthogonal projection of $x_0$ onto $\Ker(A^T)$. Similarly, $(y_t)_t$ converges to the orthogonal projection of $y_0$ onto $\Ker(A)$.\\
 
 This concludes the proofs of parts 1) and 3) of Theorem \ref{thm_conv_OGDA_xAy}.

 \begin{lemma}
   \label{lemma_optimal_value}
   Let $A \in \R^{n\times p}$ with $A \neq 0$, and $0 < \eta < \frac{1}{\sqrt{3 \mu_{\max}}}$.
   Then there exist $Z'_0 \in \R^{n+p+n+p}$ and a constant $c>0$ such that $\norm{Z'_t} > c\lambda_{\max}^t$.
 \end{lemma}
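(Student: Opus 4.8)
The plan is to exhibit one real orbit $(Z'_t)_{t\geq 0}$ (that is, $Z'_t = \Lambda^t Z'_0$) whose norm decays at \emph{exactly} the geometric rate $\lambda_{\max}$, by starting inside the real part of an eigenspace of $\Lambda$ attached to an eigenvalue of maximal modulus. This not only proves the lemma but shows the rate $\lambda_{\max}$ of Theorem \ref{thm_conv_OGDA_xAy} is best possible.

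First I would pick any $\lambda \in \Sp(\Lambda)$ with $|\lambda| = \lambda_{\max}$, which exists by definition of $\lambda_{\max}$. Since $A \neq 0$, the set $\S(A)$ contains some $\mu > 0$, so by Lemmas \ref{lem_eigenvalues_Lambda} and \ref{lem_expression_Smu} the set $\Sp(\Lambda)\setminus\{0,1\}$ is nonempty, and since $\eta\sqrt{\mu} < \tfrac{1}{\sqrt 3}$ for every $\mu \in \S(A)$ all its elements have modulus in $(0,1)$; hence $\lambda_{\max} \in (0,1)$. Moreover, Lemma \ref{lem_expression_Smu} shows that the only real elements of $\Sp(\Lambda)$ are $0$ and $1$: every element of $S(\mu)$ with $\mu>0$ is of the form $\tfrac12(1\pm\delta+2iX)$ with $X = \eta\sqrt\mu > 0$, hence has nonzero imaginary part. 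Since $0 < \lambda_{\max} < 1$, the chosen $\lambda$ is therefore non-real. This ``non-reality of a dominant eigenvalue'' is the only point of the argument that genuinely uses the structure of $\Lambda$; everything else is elementary linear algebra.

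Next I would fix a complex eigenvector $z_\lambda \in E_\lambda \setminus\{0\}$ and write $z_\lambda = u + iv$ with $u,v \in \R^{n+p+n+p}$. Since $\Lambda$ is a real matrix and $\lambda \notin \R$, the vectors $z_\lambda$ and $\overline{z_\lambda}$ are eigenvectors for the \emph{distinct} eigenvalues $\lambda$ and $\overline{\lambda}$, hence $\C$-linearly independent; equivalently $u$ and $v$ are $\R$-linearly independent (in particular both nonzero). Set $Z'_0 := z_\lambda + \overline{z_\lambda} = 2u \in \R^{n+p+n+p}\setminus\{0\}$. Writing $\lambda = \lambda_{\max}\, e^{i\theta}$, then for all $t \geq 0$,
\[ Z'_t = \Lambda^t Z'_0 = \lambda^t z_\lambda + \overline{\lambda}^{\,t}\,\overline{z_\lambda} = 2\,\mathrm{Re}\!\left(\lambda^t z_\lambda\right) = 2\,\lambda_{\max}^{t}\big(\cos(t\theta)\,u - \sin(t\theta)\,v\big), \]
which is real, so that $\norm{Z'_t} = 2\,\lambda_{\max}^{t}\,\norm{\cos(t\theta)\,u - \sin(t\theta)\,v}$.

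Finally I would bound the trigonometric factor from below, uniformly in $t$: one has $\norm{\cos(t\theta)\,u - \sin(t\theta)\,v}^2 = w_t^{T} G\, w_t$, where $w_t = (\cos t\theta,\, -\sin t\theta)^{T}$ is a unit vector and $G$ is the Gram matrix of $(u,v)$, which is positive definite because $u,v$ are linearly independent. Letting $g>0$ denote the smallest eigenvalue of $G$, we get $w_t^{T} G\, w_t \geq g$ for every $t$, hence $\norm{Z'_t} \geq 2\sqrt{g}\,\lambda_{\max}^{t} > c\,\lambda_{\max}^{t}$ with $c := \sqrt{g} > 0$, which is the assertion. I do not expect any real obstacle beyond the non-reality observation of the first step; note also that the construction uses a single genuine eigenvector, so the conclusion holds regardless of whether $\Lambda$ is diagonalizable, and in particular in case 3b) of Theorem \ref{thm_conv_OGDA_xAy}.
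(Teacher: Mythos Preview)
Your proof is correct. It differs from the paper's argument in a useful way, so let me briefly compare.

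The paper proceeds abstractly: it fixes an eigenvalue $\lambda$ of modulus $\lambda_{\max}$, then argues that because $\R^{n+p+n+p}$ spans $\C^{n+p+n+p}$, there must exist a real vector $Z'_0$ whose component in the generalized eigenspace $F_\lambda$ (along the remaining $F_{\lambda'}$) is nonzero, and concludes via the asymptotic equivalence $\|\Lambda^t(Z'_0-Z'_\infty)\|\sim\|\Lambda^t z_\lambda\|$. This works but is non-constructive, and the asymptotic step needs a little care when $\Lambda$ is not diagonalizable.

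Your approach is more direct: you observe (from Lemma~\ref{lem_expression_Smu}) that every eigenvalue of $\Lambda$ other than $0$ and $1$ is genuinely non-real, so a dominant eigenvalue $\lambda$ with $|\lambda|=\lambda_{\max}\in(0,1)$ satisfies $\lambda\neq\bar\lambda$. Writing a complex eigenvector as $z_\lambda=u+iv$ then forces $u,v$ to be $\R$-linearly independent, and taking $Z'_0=2u$ gives the exact closed form $Z'_t=2\lambda_{\max}^t(\cos(t\theta)u-\sin(t\theta)v)$, which you bound below via the smallest eigenvalue of the Gram matrix of $(u,v)$. This is elementary, fully explicit (the constant $c$ is identified), and---as you note---insensitive to whether $\Lambda$ is diagonalizable, so it covers case~3b) of Theorem~\ref{thm_conv_OGDA_xAy} without any extra argument. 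A further nicety is that your $Z'_0$ lies in $E_\lambda\oplus E_{\bar\lambda}$, hence $Z'_t\to 0$; the bound $\|Z'_t\|\ge c\lambda_{\max}^t$ is thus genuinely about the decay rate and not a triviality coming from a nonzero limit.
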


 \begin{proof}
   If $\lambda_{\max}=\lambda_*$,  consider $\lambda=\frac12(1+\sqrt{1-4\eta^2\mu_{\min}}+2i\eta \sqrt{\mu_{\min}})$, and lemmas \ref{lem_expression_Smu} and \ref{lem_eigenvalues_Lambda} will imply that $\lambda$ is an eigenvalue of $\Lambda$ with modulus $\lambda_{\max}<1$. Similarly, if $\mu_{\max} \geq \frac{1}{4\eta^2}$ and $\lambda_{\max}=\lambda_{**}$, consider $\lambda= \frac{1}{2}(1+i \sqrt{4 \eta^2 \mu_{\max}-1} +2i \eta \sqrt{\mu_{\max}})$, and similarly lemmas \ref{lem_expression_Smu} and \ref{lem_eigenvalues_Lambda} will imply that $\lambda$ is an eigenvalue of $\Lambda$ with modulus $\lambda_{\max}<1$.\\
     
   In both cases, consider $Z_0 \in \C ^{n+p+n+p} \neq0$ such that $\Lambda Z_0=\lambda Z_0$, we have for all $t$, $Z_t=\lambda^t Z_0\xrightarrow[t\to \infty]{}0$. So for all $t\geq 0$,
  \[\|(x_t,y_t,x_{t-1},y_{t-1})- 0\| = \lambda_{\max}^t \|Z_0\|.\]
  Now, the initialization $Z'_0$ that we should consider should have all of its coefficient real, while $Z_0$ may not be in $\R^{n+p+n+p}$.
  Let us prove the existence of a real vector $Z'_0$ verifying the lemma.\\

  For $\lambda$ an eigenvalue of $\Lambda$, let us denote by $F_\lambda$ the kernel $\Ker(\Lambda-\lambda I)^{\nu(\lambda)}$ where $\nu(\lambda)$ is the algebraic multiplicity of $\lambda$.
  Then, $\C^{n+p+n+p} = \bigoplus_{\lambda \in \Sp(\Lambda)} F_\lambda$.
  Remark that if $\Lambda$ is diagonalizable, $F_\lambda = E_\lambda$ for all $\lambda$.
  Because $\Vect_\C(\R^{n+p+n+p}) = \C^{n+p+n+p}$, there exist a vector $Z'_0 \in \R^{n+p+n+p}$ such that its projection on $F_{\lambda_{\max}}$ along $\bigoplus_{\lambda \in \Sp(\Lambda)\backslash\{\lambda_{\max}\}} F_\lambda$ is nonzero.
  Let us call $z_{\lambda_{\max}}$ this projection.
  Then, because $\lambda_{\max}$ is the biggest eigenvalue strictly smaller than 1, there exits a constant $c>0$ with
  \[\norm{\Lambda^t(Z'_0 - Z'_\infty)} \sim \norm{\Lambda^t z_{\lambda_{\max}}} > c\lambda_{\max}^t\]
  proving the wanted result.




 \end{proof}

\subsubsection{Speed of convergence when $0<\eta< {\frac{1}{ \sqrt{3\mu_{\max}}}}$ and $\frac{1}{4\eta^2}\notin \S(A)$ }
We prove here part 2) of Theorem \ref{thm_conv_OGDA_xAy}. Recall from subsection \ref{CVOK} that 
for all $t\geq 1$, \[Z_t-Z_\infty=\sum_{\lambda \in Sp(\Lambda), \lambda \neq 0,1} \lambda^t z_{\lambda}=\sum_{\mu\in \S(A)), \mu>0} \sum_{l=1}^4 \lambda_{l(\mu)}^t z_{\lambda_l (\mu)}.\]

The matrix $\Lambda$ is not diagonalizable in an orthogonal basis, and the vectors $(z_\lambda)$ are not orthogonal in general. We will show however that the  situation is close to it.\\

From Lemma  \ref{lem_expression_eigenspaces}, we have  for  $\mu>0$ in  $\S(A)$, if $l=1,2$, $E_{\lambda_l(\mu)}=  $
$$ \left\{(x,y,x',y')\in \C^{n}\times \C^p \times \C^n\times \C^p,  x = \lambda_l(\mu) x' , y=\lambda_l(\mu) y', A^TAy'= \mu y' , x'= \frac{-i}{\sqrt{\mu}} Ay' \right\},$$
and if $l=3,4$, $E_{\lambda_l(\mu)}= $  $$ \left\{(x,y,x',y')\in \C^{n}\times \C^p \times \C^n\times \C^p,  x = \lambda_l(\mu) x' , y=\lambda_l(\mu) y', A^TAy'= \mu y' , x'= \frac{i}{\sqrt{\mu}} Ay' \right\}.$$
For the case where $\mu=0$, we have 
$E_0 = \left\{(0, 0,x',y') \in \C^{n}\times \C^p \times \C^n\times \C^p, Ay'= 0 , A^Tx'= 0 \right\}$ and 
and
$E_1 = \left\{(x', y',x',y') \in \C^{n}\times \C^p \times \C^n\times \C^p, Ay'= 0 , A^Tx'= 0 \right\}$.

\begin{lemma}$\;$
  \label{lem_eigenspace}

a) For $\mu_1\neq \mu_2$  in $\S(A)$, the vector subspaces $\bigoplus_{l=1,2,3,4} E_{\lambda_l(\mu_1)}$ and $\bigoplus_{l=1,2,3,4} E_{\lambda_l(\mu_2)}$ are orthogonal.

b) For $\mu$ in $\S(A)\backslash\{0\}$, $l\in \{1,2\}$ and $l'\in \{3,4\}$, the vector subspaces $E_{\lambda_l(\mu)}$ and $  E_{\lambda_{l'}(\mu)}$ are orthogonal.
\end{lemma}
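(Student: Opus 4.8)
The plan is to reduce both statements to the familiar fact that eigenspaces of a real symmetric matrix attached to distinct eigenvalues are orthogonal, after rewriting each eigenvector of $\Lambda$ in a normalized form. From Lemma~\ref{lem_expression_eigenspaces} and the explicit descriptions of the $E_{\lambda_l(\mu)}$ recalled just above, I would first record that any eigenvector $Z$ of $\Lambda$ for an eigenvalue $\lambda\in S(\mu)$ has the shape $Z=(\lambda\tilde x,\lambda\tilde y,\tilde x,\tilde y)$, where $\tilde y\in\Ker(A^TA-\mu I)$, $\tilde x\in\Ker(AA^T-\mu I)$, and, when $\mu>0$, $\tilde x=\varepsilon\,\tfrac{i}{\sqrt\mu}\,A\tilde y$ with $\varepsilon=-1$ if $\lambda\in\{\lambda_1(\mu),\lambda_2(\mu)\}$ and $\varepsilon=+1$ if $\lambda\in\{\lambda_3(\mu),\lambda_4(\mu)\}$ (for $\mu=0$ one just has $\lambda\in\{0,1\}$, $\tilde x\in\Ker(A^T)$, $\tilde y\in\Ker(A)$). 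For two such vectors $Z^{(1)},Z^{(2)}$ with parameters $(\lambda_1,\tilde x^{(1)},\tilde y^{(1)})$ and $(\lambda_2,\tilde x^{(2)},\tilde y^{(2)})$, sesquilinearity of the Hermitian product immediately gives
$$\langle Z^{(1)},Z^{(2)}\rangle=(1+\lambda_1\overline{\lambda_2})\bigl(\langle\tilde x^{(1)},\tilde x^{(2)}\rangle+\langle\tilde y^{(1)},\tilde y^{(2)}\rangle\bigr),$$
so everything comes down to controlling the bracket on the right.

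For part~a) I would take $\mu_1\ne\mu_2$ in $\S(A)$ and invoke that $A^TA$ and $AA^T$ are real symmetric with real spectrum, so their eigenspaces attached to distinct eigenvalues are mutually orthogonal for $\langle\cdot,\cdot\rangle$ — the eigenvectors may be complex, but this is the usual one-line computation using that the matrix and its eigenvalues are real. Hence $\langle\tilde y^{(1)},\tilde y^{(2)}\rangle=\langle\tilde x^{(1)},\tilde x^{(2)}\rangle=0$, so $\langle Z^{(1)},Z^{(2)}\rangle=0$ by the displayed identity; since this holds for arbitrary eigenvectors taken from the two direct sums, the subspaces $\bigoplus_l E_{\lambda_l(\mu_1)}$ and $\bigoplus_l E_{\lambda_l(\mu_2)}$ are orthogonal. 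When one of $\mu_1,\mu_2$ equals $0$, the corresponding eigenspaces ($E_0$, $E_1$) have $\tilde x$-component in $\Ker(A^T)=\Ker(AA^T)$ and $\tilde y$-component in $\Ker(A)=\Ker(A^TA)$, so the same argument applies; this is exactly Lemma~\ref{lem_ortho}.

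For part~b) I would fix $\mu\in\S(A)\setminus\{0\}$, indices $l\in\{1,2\}$ and $l'\in\{3,4\}$, and take $Z^{(1)}\in E_{\lambda_l(\mu)}$ (so $\varepsilon_1=-1$) and $Z^{(2)}\in E_{\lambda_{l'}(\mu)}$ (so $\varepsilon_2=+1$). Using $\tilde x^{(j)}=\varepsilon_j\tfrac{i}{\sqrt\mu}A\tilde y^{(j)}$, the relation $\langle Au,Av\rangle=\mu\langle u,v\rangle$ (valid as soon as $A^TAv=\mu v$, since then $A^TA\bar v=\mu\bar v$), and $\overline{i/\sqrt\mu}=-i/\sqrt\mu$, a one-line computation gives $\langle\tilde x^{(1)},\tilde x^{(2)}\rangle=\varepsilon_1\varepsilon_2\langle\tilde y^{(1)},\tilde y^{(2)}\rangle=-\langle\tilde y^{(1)},\tilde y^{(2)}\rangle$. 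Thus the bracket in the displayed identity vanishes, $\langle Z^{(1)},Z^{(2)}\rangle=0$, and this again extends by sesquilinearity to the full subspaces $E_{\lambda_l(\mu)}$ and $E_{\lambda_{l'}(\mu)}$.

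I do not expect a genuine obstacle here: once the spectral description of the $E_\lambda$ is in hand, the proof is essentially bookkeeping. The two places where care is needed are (i) tracking the sign $\varepsilon=\pm1$ distinguishing the families $\{\lambda_1(\mu),\lambda_2(\mu)\}$ and $\{\lambda_3(\mu),\lambda_4(\mu)\}$ — it is precisely the product $\varepsilon_1\varepsilon_2=-1$ that makes part~b) work, and indeed $E_{\lambda_1(\mu)}$ and $E_{\lambda_2(\mu)}$ (same sign) are in general \emph{not} orthogonal, so the statement is sharp; and (ii) the degenerate value $\mu=0$, where $S(0)=\{0,1\}$, the sum $\bigoplus_l E_{\lambda_l(0)}$ reduces to $E_0\oplus E_1$, the normalization $\tilde x=\varepsilon\tfrac{i}{\sqrt\mu}A\tilde y$ is unavailable, and one simply falls back on Lemma~\ref{lem_ortho}.
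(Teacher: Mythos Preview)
Your proof is correct and follows essentially the same route as the paper's: both arguments rest on the parametrization $Z=(\lambda x',\lambda y',x',y')$ from Lemma~\ref{lem_expression_eigenspaces}, use the orthogonality of eigenspaces of $A^TA$ (resp.\ $AA^T$) for distinct eigenvalues in part~a), and the sign flip coming from $x'=\pm\tfrac{i}{\sqrt\mu}Ay'$ to obtain $\langle x'_1,x'_2\rangle=-\langle y'_1,y'_2\rangle$ in part~b). The only cosmetic difference is that you first package the four terms into the single identity $\langle Z^{(1)},Z^{(2)}\rangle=(1+\lambda_1\overline{\lambda_2})\bigl(\langle\tilde x^{(1)},\tilde x^{(2)}\rangle+\langle\tilde y^{(1)},\tilde y^{(2)}\rangle\bigr)$, whereas the paper computes the four components of the inner product separately and cancels them pairwise.
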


\begin{proof}
  a) Consider $z_1=(x_1,y_1,x'_1,y'_1)$ in $E_{\lambda_l(\mu_1)}$ and $z_2=(x_2,y_2,x'_2,y'_2)$ in $E_{\lambda_{l'}(\mu_2)}$. We have both: 
  \[\langle Ay'_1,Ay'_2\rangle\; =\; \langle y'_1,A^TAy'_2\rangle\; =\; \langle y'_1,\mu_2y'_2\rangle\; =\; \mu_2\langle y'_1,y'_2\rangle,\]
  \[ \langle Ay'_1,Ay'_2\rangle\; =\; \langle A^TAy'_1,y'_2\rangle\; =\; \langle\mu_1y'_1,y'_2\rangle\; =\; \mu_1\langle y'_1,y'_2\rangle.\]
  Since $\mu_1\neq \mu_2$, $\langle y'_1,y'_2\rangle=0$. Then in all cases $\langle y_1,y_2\rangle=\langle x'_1,x'_2\rangle=\langle x_1,x_2\rangle=0$, and finally $\langle z_1,z_2\rangle=0$.
 
 b) Consider $z_1=(x_1,y_1,x'_1,y'_1)$ in $E_{\lambda_l(\mu)}$ and $z_2=(x_2,y_2,x'_2,y'_2)$ in $E_{\lambda_{l'}(\mu)}$.
 \begin{eqnarray*}
 \langle x'_1,x'_2\rangle&=  \langle\frac{-i}{\sqrt{\mu}} Ay'_1,\frac{i}{\sqrt{\mu}} Ay'_2 \rangle&=  -\frac{1}{\mu}\langle Ay'_1,Ay'_2\rangle\\ \; & = -\frac{1}{\mu}\langle y'_1,A^TAy'_2\rangle&= -\frac{1}{\mu}\langle y'_1,\mu y'_2\rangle= -\langle y'_1,y'_2\rangle.
 \end{eqnarray*}
 Then $\langle x_1,x_2\rangle=-\langle y_1,y_2\rangle$, and $\langle z_1,z_2\rangle=\langle x_1,x_2\rangle + \langle y_1,y_2\rangle+\langle x'_1,x'_2\rangle+\langle y'_1,y'_2\rangle=0.$
\end{proof}

\begin{rem} \rm
  
A consequence of the previous  lemma is that for $t\geq 1$: 
\[\|Z_t-Z_\infty\|^2=\sum_{\mu\in \S(A), \mu>0} \left( \|\lambda_{1}(\mu)^t z_{\lambda_1 (\mu)}+\lambda_{2}(\mu)^t z_{\lambda_2(\mu)}\|^2 + \|\lambda_{3}(\mu)^t z_{\lambda_3(\mu)}+\lambda_{4}(\mu)^t z_{\lambda_4(\mu)}\|^2\right).\]
We now fix $\mu>0$ in $\S(A)$ and simply write $\lambda_l$ for $\lambda_l(\mu)$ and $z_l$ for $z_{\lambda_l}$.  \\

We  need to handle the terms $ \|\lambda_{1}^t z_1+\lambda_{2}^t z_2\|^2$ and $ \|\lambda_{3} ^t z_3+\lambda_{4}^t z_{_4 }\|^2.$ Consider the first term (the second one will be treated similarly), we want to write $ \|\lambda_{1}^t z_{1 }+\lambda_{2}^t z_{2}\|^2\leq C |\lambda_1|^{2t} \|z_1+z_2\|^2$, where $C$ is a constant independent of the dimension.  This would be easy if $z_1$ and $z_2$ were orthogonal, and $C=1$ would  do. This would be impossible if $z_1+z_2=0$ with $z_1\neq 0$, but this can not happen since $z_1$ and $z_2$ belong to different eigenspaces. The key idea  is that the ``angle'' between the subspaces $E_{\lambda_1}$ and $E_{\lambda_2}$ can not be too low and is, unformally speaking, close to $\pi/4$: think of  $\lambda_1$ close to 1, and of $\lambda_2$ close to 0 so that for some vector subspace $F$,  $E_{\lambda_1}$  can be seen as the set of vectors $\{(\alpha, \alpha), \alpha \in F\}$ whereas $E_{\lambda_2}$  can be seen as the set of vectors $\{(0, \alpha), \alpha \in F\}$. If as a thought experiment we imagine $F$ being  the real line, we would have the horizontal axis and the 45-degree line in the Euclidean plane.\\

\end{rem}

\begin{notas} 
$$C(\mu)=\left\{ \begin{array} {ccc}  \sqrt{\frac{2}{1-\sqrt{\frac{1+5\eta^2\mu_{}}{2+ \eta^2\mu_{}}}}} & \mbox{if} & \eta \sqrt{\mu}<1/2,\\
\sqrt{\frac{2}{1-\sqrt{\frac{2+\eta^2\mu_{}}{1+ 5\eta^2\mu_{}}}}}&  \mbox{if} & \eta \sqrt{\mu}>1/2\end{array}\right.$$
\end{notas}

\begin{lemma}  $z_l$ being in $E_{\lambda_l(\mu)}$ for each $l=1,2,3,4$, we have:

 a) if  $\eta \sqrt{\mu}<1/2$, 
 $$\langle z_1,z_2\rangle | \leq \sqrt{\frac{1+5\eta^2\mu}{2+\eta^2\mu}} \; \|z_1\| \|z_2\|
  \; {\rm and}\;  |\langle z_3,z_4\rangle | \leq \sqrt{\frac{1+5\eta^2\mu}{2+\eta^2\mu}}\;  \|z_3\| \|z_4\|,$$
  and if $\eta \sqrt{\mu}>1/2$, 
  $$\langle z_1,z_2\rangle | \leq \sqrt{\frac{2+\eta^2\mu}{1+5\eta^2\mu}} \; \|z_1\| \|z_2\|
  \; {\rm and}\;  |\langle z_3,z_4\rangle | \leq \sqrt{\frac{2+\eta^2\mu}{1+5\eta^2\mu}}\;  \|z_3\| \|z_4\|,$$
  b) $ \|\lambda_1^t z_1+ \lambda_2^t z_2\|^2\leq C(\mu)^2 |\lambda_1|^{2t} \|z_1+z_2\|^2$ and $|\lambda_3^t z_1+ \lambda_4^t z_2\|^2\leq C(\mu)^2 |\lambda_1|^{2t} \|z_3+z_4\|^2.$

%
%
 
 \end{lemma}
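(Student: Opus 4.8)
The plan is to parametrize the eigenvectors and push every inner product down to its $y'$-component, which lives in $\Ker(A^TA-\mu I)$. Write $z_l=(\lambda_l x'_l,\lambda_l y'_l,x'_l,y'_l)$ with $A^TAy'_l=\mu y'_l$ and $x'_l=\varepsilon_l\tfrac{i}{\sqrt\mu}Ay'_l$, where $\varepsilon_l=-1$ for $l=1,2$ and $\varepsilon_l=+1$ for $l=3,4$ (Lemma~\ref{lem_expression_eigenspaces}). Since $A$ is real, $\langle Ay'_i,Ay'_j\rangle=\langle A^TAy'_i,y'_j\rangle=\mu\langle y'_i,y'_j\rangle$, so $\langle x'_i,x'_j\rangle=\varepsilon_i\varepsilon_j\langle y'_i,y'_j\rangle$. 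Hence when $\varepsilon_i=\varepsilon_j$, i.e. for $\{i,j\}\subset\{1,2\}$ or $\{i,j\}\subset\{3,4\}$, one gets $\langle z_i,z_j\rangle=2(1+\lambda_i\overline{\lambda_j})\langle y'_i,y'_j\rangle$ and in particular $\|z_i\|^2=2(1+|\lambda_i|^2)\|y'_i\|^2$.

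For part a), this reduction together with Cauchy--Schwarz on the $y'$-components gives $\frac{|\langle z_i,z_j\rangle|}{\|z_i\|\,\|z_j\|}\le\frac{|1+\lambda_i\overline{\lambda_j}|}{\sqrt{(1+|\lambda_i|^2)(1+|\lambda_j|^2)}}$, so it remains to evaluate this ratio for $(i,j)=(1,2)$ and $(i,j)=(3,4)$. Using the formulas of Lemma~\ref{lem_expression_Smu} with $X=\eta\sqrt\mu$, I would split into the case $X<1/2$ (so $\delta=\sqrt{1-4X^2}$ is real), where $|\lambda_1|^2=\tfrac{1+\delta}2$, $|\lambda_2|^2=\tfrac{1-\delta}2$, $\lambda_1\overline{\lambda_2}=2X^2-iX\delta$, giving $(1+|\lambda_1|^2)(1+|\lambda_2|^2)=\tfrac{9-\delta^2}{4}=2+X^2$ and $|1+\lambda_1\overline{\lambda_2}|^2=(1+2X^2)^2+X^2\delta^2=1+5X^2$; and the case $X>1/2$ (so $\delta=i\sqrt{4X^2-1}$), where the analogous computation yields $(1+|\lambda_1|^2)(1+|\lambda_2|^2)=1+5X^2$ and $|1+\lambda_1\overline{\lambda_2}|^2=2+X^2$. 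Substituting $X^2=\eta^2\mu$ produces exactly the two claimed bounds. For $(3,4)$ it suffices to note $\lambda_3=\overline{\lambda_1}$, $\lambda_4=\overline{\lambda_2}$, hence $|\lambda_3|=|\lambda_1|$, $|\lambda_4|=|\lambda_2|$ and $\lambda_3\overline{\lambda_4}=\overline{\lambda_1\overline{\lambda_2}}$, so the same numbers come out.

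For part b), I fix $t\ge0$; the cases where $z_1$ or $z_2$ vanishes are immediate from $|\lambda_2|\le|\lambda_1|$ and $C(\mu)\ge1$, so assume $z_1,z_2\neq0$ and set $c=|\langle z_1,z_2\rangle|/(\|z_1\|\|z_2\|)$. By part a), $c\le\kappa$, where $\kappa$ is the right-hand side of part a) for $(1,2)$, and $\kappa<1$ since $\kappa<1$ is equivalent to $4X^2\neq1$, which holds. On one hand, $2\,\mathrm{Re}\langle z_1,z_2\rangle\ge-2c\|z_1\|\|z_2\|\ge-c(\|z_1\|^2+\|z_2\|^2)$ gives $\|z_1+z_2\|^2\ge(1-c)(\|z_1\|^2+\|z_2\|^2)\ge(1-\kappa)(\|z_1\|^2+\|z_2\|^2)$. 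On the other hand, expanding $\|\lambda_1^t z_1+\lambda_2^t z_2\|^2=|\lambda_1|^{2t}\|z_1\|^2+|\lambda_2|^{2t}\|z_2\|^2+2\,\mathrm{Re}(\lambda_1^t\overline{\lambda_2^t}\langle z_1,z_2\rangle)$ and using $|\lambda_2|\le|\lambda_1|$ (Lemma~\ref{lem_expression_Smu}), I bound $|\lambda_2|^{2t}\|z_2\|^2\le|\lambda_1|^{2t}\|z_2\|^2$ and $|\lambda_1^t\overline{\lambda_2^t}\langle z_1,z_2\rangle|=|\lambda_1|^t|\lambda_2|^t\,|\langle z_1,z_2\rangle|\le|\lambda_1|^{2t}c\|z_1\|\|z_2\|\le|\lambda_1|^{2t}\tfrac c2(\|z_1\|^2+\|z_2\|^2)$, whence $\|\lambda_1^t z_1+\lambda_2^t z_2\|^2\le(1+c)|\lambda_1|^{2t}(\|z_1\|^2+\|z_2\|^2)\le 2|\lambda_1|^{2t}(\|z_1\|^2+\|z_2\|^2)$. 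Combining the two estimates gives $\|\lambda_1^t z_1+\lambda_2^t z_2\|^2\le\frac{2}{1-\kappa}|\lambda_1|^{2t}\|z_1+z_2\|^2=C(\mu)^2|\lambda_1|^{2t}\|z_1+z_2\|^2$; the estimate for $\lambda_3^t z_3+\lambda_4^t z_4$ is identical, using $|\lambda_3|=|\lambda_1|\ge|\lambda_4|=|\lambda_2|$ and that $z_3,z_4$ satisfy the same relations.

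The only genuinely laborious step is the two--case algebraic evaluation of $|1+\lambda_1\overline{\lambda_2}|$ and $(1+|\lambda_1|^2)(1+|\lambda_2|^2)$ in part a); everything else is Cauchy--Schwarz, the elementary inequality $2ab\le a^2+b^2$, and the observation — morally, that the angle between $E_{\lambda_1}$ and $E_{\lambda_2}$ stays bounded away from $0$ — packaged in the bound $c\le\kappa<1$.
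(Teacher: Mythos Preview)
Your proof is correct and follows essentially the same approach as the paper: reduce the inner products to the $y'$-components via the parametrization of $E_{\lambda_l(\mu)}$, compute $|1+\lambda_1\overline{\lambda_2}|$ and $(1+|\lambda_1|^2)(1+|\lambda_2|^2)$ explicitly in the two regimes $X\lessgtr 1/2$, and then combine the angle bound $\kappa<1$ with the elementary inequalities $\|u+v\|^2\le 2(\|u\|^2+\|v\|^2)$ and $2ab\le a^2+b^2$. Your write-up is in fact a bit more explicit about the algebraic verification and about the trivial cases $z_1=0$ or $z_2=0$, but the argument is the same.
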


\begin{proof}
a)  Write $z_1=(x_1,y_1,x'_1,y'_1)$ and $z_2=(x_2,y_2,x'_2,y'_2)$.

\begin{eqnarray*} \langle x'_1,x'_2\rangle&= \langle\frac{-i}{\sqrt{\mu}} Ay'_1,\frac{-i}{\sqrt{\mu}} Ay'_2\rangle&= \frac{1}{\mu}\langle Ay'_1,Ay'_2\rangle\\
\; &=\frac{1}{\mu}\langle y'_1,A^TAy'_2\rangle&=\frac{1}{\mu}\langle y'_1,\mu y'_2\rangle= \langle y'_1,y'_2\rangle,\end{eqnarray*}
 
 and $\langle x_1,x_2\rangle= \lambda_1\overline{\lambda}_2\langle x'_1,x'_2\rangle= \lambda_1\overline{\lambda}_2\langle y'_1,y'_2\rangle=\langle y_1,y_2\rangle.$ So \[\langle z_1,z_2\rangle=2(1+\lambda_1\overline{\lambda}_2)\langle y'_1,y'_2\rangle,\] and similarly 
one can show that  $\|z_1\|^2=2(1+|\lambda_1|^2) \|y'_1\|^2$  {\rm and} $ \|z_2\|^2=2(1+|\lambda_2|^2) \|y'_2\|^2.$
 We obtain:
 \[\langle z_1,z_2\rangle = \frac{(1+ \lambda_1 \overline{\lambda}_2)}{\sqrt{(1+|\lambda_1|^2) (1+|\lambda_2|^2)}}\;\; \frac{\langle y'_1,y'_2\rangle}{\|y'_1\|\|y'_2\|} \;\;\|z_1\| \|z_2\|.\]
 Computations show that:
 
 If $\eta \sqrt{\mu}<1/2$, then  ${\sqrt{(1+|\lambda_1|^2) (1+|\lambda_2|^2)}}=\sqrt{2+\eta^2\mu}$, and $|1+ \lambda_1 \overline{\lambda}_2|=\sqrt{1+5\eta^2\mu}.$ Since $ \frac{|\langle y'_1,y'_2\rangle |}{\|y'_1\|\|y'_2\|}\leq 1$ by Cauchy-Schwartz inequality, we obtain the upper bound  for $|\langle z_1,z_2\rangle|$. The proof is similar for $|\langle z_3,z_4\rangle|$.\\
 
 If $\eta \sqrt{\mu}>1/2$, then  ${\sqrt{(1+|\lambda_1|^2) (1+|\lambda_2|^2)}}=\sqrt{1+5\eta^2\mu}$, and $|1+ \lambda_1 \overline{\lambda}_2|=\sqrt{2+\eta^2\mu}.$ We obtain the upper bound  for $|\langle z_1,z_2\rangle|$, and the proof is similar for $|\langle z_3,z_4\rangle|$.\\
 
b) On the one-hand, 
\[\|\lambda_1^t z_1+ \lambda_2^t z_2\|^2\leq 2  \left(\|\lambda_1^t z_1\|^2+ \|\lambda_2^t z_2\|^2\right)\leq 2|\lambda_1|^{2t} \left( \|z_1\|^2 + \|z_2\|^2\right).\]
On the other hand, if $\eta \sqrt{\mu}<1/2$,
\begin{eqnarray*}\|z_1+z_2\|^2&=&\|z_1\|^2+\|z_2\|^2+ 2\; {\cal R}e(\langle z_1,z_2\rangle),\\
 &\geq &\|z_1\|^2+\|z_2\|^2 - 2 \sqrt{\frac{1+5\eta^2\mu}{2+\eta^2\mu}}\norm{z_1}\norm{z_2},\\
&\geq &\|z_1\|^2+\|z_2\|^2 - \sqrt{\frac{1+5\eta^2\mu}{2+\eta^2\mu}}(\|z_1\|^2+\|z_2\|^2),\\
&\geq &(\|z_1\|^2+\|z_2\|^2) \left(1-\sqrt{\frac{1+5\eta^2\mu}{2+\eta^2\mu}}\right).\end{eqnarray*}
Notice that $\eta^2\mu<1/4$ implies $1-\sqrt{\frac{1+5\eta^2\mu}{2+\eta^2\mu}}>0$. We obtain:
\[\|\lambda_1^t z_1+ \lambda_2^t z_2\|^2\leq C(\mu)^2 |\lambda_1|^{2t} \|z_1+z_2\|^2.\]
And if $\eta \sqrt{\mu}>1/2$,
\begin{eqnarray*}\|z_1+z_2\|^2&=&\|z_1\|^2+\|z_2\|^2+ 2\; {\cal R}e(\langle z_1,z_2\rangle),\\
 &\geq &\|z_1\|^2+\|z_2\|^2 - 2 \sqrt{\frac{2+\eta^2\mu}{1+5\eta^2\mu}}\norm{z_1}\norm{z_2},\\
&\geq &\|z_1\|^2+\|z_2\|^2 - \sqrt{\frac{2+\eta^2\mu}{1+5\eta^2\mu}}(\|z_1\|^2+\|z_2\|^2),\\
&\geq &(\|z_1\|^2+\|z_2\|^2) \left(1-\sqrt{\frac{2+\eta^2\mu}{1+5\eta^2\mu}}\right).\end{eqnarray*}
Notice that $\eta^2\mu>1/4$ implies $1-\sqrt{\frac{2+\eta^2\mu}{1+5\eta^2\mu}}>0$. We obtain:
\[\|\lambda_1^t z_1+ \lambda_2^t z_2\|^2\leq C(\mu)^2 |\lambda_1|^{2t} \|z_1+z_2\|^2.\]

The remaining inequalities  are proved similarly.
\end{proof}

\begin{notas} $\;$

 $C_*={\bf{1}}_{\mu_{\min} < \frac{1}{4\eta^2}} \sqrt{\frac{2}{1-\sqrt{\frac{1+5\eta^2\mu_{*}}{2+ \eta^2\mu_{*}}}}}$, with $\mu_{*}=\max\{\mu \in \S(A), \eta \sqrt{\mu}<1/2\}$, and 
 
$C_{**}={\bf{1}}_{\mu_{\max} \geq  \frac{1}{4\eta^2}} \sqrt{\frac{2}{1-\sqrt{\frac{2+\eta^2\mu_{**}}{1+ 5\eta^2\mu_{**}}}}}$, with $\mu_{**}=\min\{\mu \in \S(A), \eta \sqrt{\mu}>1/2\}.$

$C=\max\{C_*,C_{**}\}$. 
\end{notas}

We can now bound  $\|Z_t-Z_\infty\|^2$.\\
 \begin{eqnarray*}
\|Z_t-Z_\infty\|^2&=&\sum_{\mu\in \S(A), \mu>0} \left( \|\lambda_{1}(\mu)^t z_{\lambda_1 (\mu)}+\lambda_{2}(\mu)^t z_{\lambda_2(\mu)}\|^2 + \|\lambda_{3}(\mu)^t z_{\lambda_3(\mu)}+\lambda_{4}(\mu)^t z_{\lambda_4(\mu)}\|^2\right),\\
\; &\leq &\sum_{\mu\in \S(A), \mu>0} C(\mu) ^2 |\lambda_{1}(\mu)|^{2t} \left( \|  z_{\lambda_1 (\mu)}+  z_{\lambda_2(\mu)}\|^2 + \|  z_{\lambda_3(\mu)}+  z_{\lambda_4(\mu)}\|^2\right),\\ \;&\leq &C^2 \lambda_{\max}^{2t} \|Z_0\|^2.\end{eqnarray*}
 
And we get: 
\begin{equation}
  \label{eq_exp_rate}
  \|Z_t-Z_\infty\|\leq C\lambda_{\max}^{t}\|Z_0\|.
\end{equation}
\vspace{0.5cm}

We can now conclude part 2) of Theorem \ref{thm_conv_OGDA_xAy}. Take any Nash equilibrium $(x^*,y^*)$ in $\Ker(A^T)\times \Ker(A)$, and define for each $t\geq -1$:  \[x'_t=x_t-x^*\; {\rm and}\; y'_t=y_t-y^*.\]
The sequence $(x'_t,y'_t)$ is induced by  the OGDA starting at  $(x'_0,y'_0,x'_{-1},y'_{-1})$.
 $(x'_t,y'_t)_t$ converges to the  limit $(x'_\infty,y'_\infty)= (x_\infty,y_\infty)-(x^*,y^*)$, so  
$\|(x'_\infty,y'_\infty)-(x'_t,y'_t)\|=\|(x_\infty,y_\infty)-(x_t,y_t)\|.$
Using  inequality (\ref{eq_exp_rate}), we obtain:
$$\|(x_\infty,y_\infty)-(x_t,y_t)\|\leq C \lambda_{\max}^{t}\|Z'_0\|,$$ with $\|Z'_0\|=\|(x_0-x^*,y_0-y^*, x'_{-1}-x^*,y'_{-1}-y^*)\|.$ Letting $(x^*,y^*)$ vary in $\Ker(A^T)\times \Ker(A)$ concludes the proof of part 2) of Theorem \ref{thm_conv_OGDA_xAy}:  For all $t\geq 0$,
\[ \|(x_t,y_t)-(x_\infty,y_\infty)\|\leq  C \; D\;  \lambda_{\rm max}^t  ,\;\]
$\text{with}\; C=\sqrt{\frac{2}{1-\sqrt{\frac{1+5\eta^2\mu_{\max}}{2+ \eta^2\mu_{\max}}}}},
$
\noindent  $D$ is the distance from the initial condition  to the set $\Ker(\Lambda-I)$, and $\lambda_{\max}=\sqrt{\frac{1}{2}(1+\sqrt{1-4\eta^2\mu_{\min}})}.$\qed

\subsection{Proof of Theorem \ref{thm_conv_OGDA_zs_general}}
\label{subsec_proof_thm_conv_OGDA_zs_general}
\begin{proof}
 Notice that the fixed points of the OGDA are exactly the Nash equilibria of the game:
  \[  \left\{
  \begin{array} {ccc} 
  x^* & = & x^* \;+\; 2\eta (A y^*+b) - \eta (A y^*+b)  \\
  y^* & = & y^* - 2\eta (A^T x^*+c) + \eta (A^T x^*+c) \end{array}
  \right. \iff \left\{
  \begin{array}{cc}
  Ay^*+b& =0\\
  A^Tx^*+c&=0
  \end{array}\right.
  \]
  Thus, saying that the set of Nash equilibria is empty is equivalent to saying  that OGDA has no fixed point: if there is no Nash equilibria, OGDA diverge.\\

  In the sequel we assume that  the set of Nash equilibria is not empty, and fix a Nash equilibrium ($x^*,y^*)$. Define for each $t\geq -1$:  \[x'_t=x_t-x^*\; {\rm and}\; y'_t=y_t-y^*.\] Easy calculations show that:
  \[\forall t\geq 0,\;  \left\{
  \begin{array} {ccc} 
  x'_{t+1} & = & x'_t  \;+\; 2\eta Ay'_t - \eta Ay'_{t-1},  \\
  y'_{t+1}  & = & y'_t \;  - 2\eta A^T x'_t + \eta A^Tx'_{t-1}.
  \end{array}\right. \]
  So  the sequence $(x'_t,  y'_t)_t$ follows the OGDA algorithm of section \ref{section_bilinear} (see \ref{OGDA_algo}). Thus, according to Theorem \ref{thm_conv_OGDA_xAy}, the sequence  $(x'_t,  y'_t)_t$ converges to a Nash equilibrium $(x'_\infty,y'_\infty) \in \Ker(A^T)\times \Ker(A)$. As a consequence $(x_t,y_t)_t$ converges to the limit $(x_\infty,y_\infty)=(x'_\infty + x^*,y'_\infty+y^*)$ which is  a fixed point of the OGDA, hence a Nash equilibrium of the game. \\
  
  Denote by $\Pi$ the orthogonal projection onto $\Ker(A^T) \times \Ker(A)$.
  According to Theorem
  \ref{thm_conv_OGDA_xAy}, we have $(x'_\infty,y'_\infty)=\Pi (x'_0,y'_0)$, so:  
\[(x_\infty,y_\infty)= (x^*,y^*)+ \Pi(x_0,y_0) -\Pi(x^*,y^*) =  (x^*,y^*)+ \Pi\left((x_0,y_0) -(x^*,y^*)\right).\]
Thus $(x_\infty,y_\infty)$ is the orthogonal projection of $(x_0, y_0)$ onto the affine space $\Ker(A^T)\times \Ker(A) + (x^*, y^*)$, which is equal to ${\{ (x,y) | A^Tx+c=0, Ay+b = 0\}}$.\\

It remains to study the convergence speed. For all $t$, we have $\|(x_t,y_t)-(x_\infty,y_\infty)\|= \|(x'_t,y'_t)-(x'_\infty,y'_\infty)\|$ and we obtain from Theorem \ref{thm_conv_OGDA_xAy} the same speeds of convergence. If  $\frac{1}{4\eta^2}\notin \S(A)$  we have for all $t\geq 0$,
\[ \|(x_t,y_t)-(x_\infty,y_\infty)\|\leq  C \; D'\;  \lambda_{\rm max}^t, \] 
with  $C$ and $\lambda_{\max}$ as in Theorem \ref{thm_conv_OGDA_xAy},  and $D'$ the distance from $(x'_0,  y'_0, x'_{-1}, y'_{-1})$ to the set $\{(x,y,x,y)\in \R^n\times \R^p\times \R^n\times \R^p, A^T=0, Ay=0\},$ and 
 
Let us finally  notice that $D'\leq \|(x'_0,y'_0,x'_{-1},y'_{-1})\|$ $=$ $\|(x_0,y_0,x_{-1},y_{-1})-(x^*,y^*,x^*,y^*)\|$.  Since this is true for all Nash equilibria $(x^*,y^*)$, we obtain that $D'$ is not greater  than the distance from $(x_0,  y_0, x_{-1}, y_{-1})$ to the set $\{(x,y,x,y), A^Tx+c=0, Ay+b=0\}.$ 
\end{proof}

\subsection{Proof of Proposition \ref{prop_spectrum_xAy_xBy}}
\label{subsec_proof_prop_spectrum_xAy_xBy}

\begin{proof}
 For  $\lambda\in \C$ and $Z =(x^T,y^T,x'^T,y'^T)^T$ $\in \C^n\times \C^p\times \C^n\times \C^p$, we have: 
  \begin{align*}
  \Lambda Z = \lambda Z
  &\iff \left\{ \begin{matrix} x + 2\eta Ay - \eta Ay' = \lambda x\\ y +2\eta B^Tx - \eta B^Tx' = \lambda y \\ x = \lambda x' \\ y = \lambda y' \end{matrix}\right.\\
  &\iff \left\{ \begin{matrix} \lambda(1-\lambda)x' + (2\lambda -1)\eta Ay' = 0\\ \lambda(1-\lambda)y' + (2\lambda-1)\eta B^Tx' = 0 \\ x = \lambda x' \\ y = \lambda y' \end{matrix}\right.\\
  \end{align*}
  Assume $\Lambda Z=\lambda Z$ and $Z\neq 0$. Multiplying (from the left) the first line by $(2\lambda-1)\eta B^T$ and the second line by $(2\lambda-1)\eta A$, we get:
  \begin{align}\left\{ \begin{matrix}\lambda^2(1-\lambda)^2y' - (2\lambda -1)^2\eta^2 B^TAy' = 0\\ \lambda^2(1-\lambda)^2x' - (2\lambda -1)^2\eta^2 AB^Tx' = 0\end{matrix}\right. \end{align}\\
  Since $Z\neq 0$, we have $x'\neq 0$ or $y'\neq 0$, and $\lambda \neq 1/2$. This implies that $\frac{\lambda^2(1-\lambda)^2}{(2\lambda-1)^2\eta^2}$ is an eigenvalue  of $B^TA$ or  of $AB^T$. If we denote by $\mu$ such eigenvalue, then $\lambda\in S^*(\mu)$.
  This gives us the first inclusion.\\

  Conversely, let $\mu$ be an eigenvalue of $AB^T$, and consider $\lambda\in S^*(\mu)$.\\
  If $\mu \neq 0$, let $x'\neq 0$ be  such that $A B^T x'=\mu x'$ and consider $y'$ satisfying:
  \[\lambda(1- \lambda) y'=- \eta (2\lambda-1) B^Tx'.\]
  Because  $\mu\neq 0$, $\lambda\notin \{0,1\}$, and thus $y'$ is uniquely defined.
  Setting $Z_\lambda = (\lambda x', \lambda y', x', y')$ gives us an eigenvector of $\Lambda_{A, B}$ associated with $\lambda$.\\
  If $\mu=0$ then $A B^T x'=0.$
  If $B^Tx'=0$, then setting $Z_0 = (0, 0, x', 0)$ and $Z_1 = (x', 0, x', 0)$ will give us an eigenvector of $\Lambda_{A, B}$ associated with $0$ and $1$ respectively.
  Otherwise, if $B^Tx' \neq 0$, then we set $Z_0 = (0, 0, 0, B^Tx')$ and $Z_1 = (0, B^T x', 0, B^Tx')$ to have an eigenvector of $\Lambda_{A, B}$ associated with $0$ and $1$ respectively.
  We have obtained  that for each eigenvalue $\mu$ of $AB^T$, each $\lambda \in \S^*(\mu)$ is an eigenvalue of $\Lambda$. 
  Similarly, this also stands for all $\mu$ an eigenvalue of $B^TA$.\\

  Thus,  we proved that the set of eigenvalues of  $\Lambda$ is $\{ \lambda | \lambda \in S^*(\mu), \mu \in \S(A, B)\}$. 
\end{proof}

\subsection{Proof of Theorem \ref{prop_conv_OGDA_xAy_xBy}}
 Let $A, B \in \R^{n\times p}$. The  proof of Proposition \ref{prop_spectrum_xAy_xBy} also provides useful expressions for the eigenspaces $E_\lambda=\Ker(\Lambda_{A,B}-\lambda I_{2(n+p)})$:  

 For  $\lambda \neq 0, 1, $
 \begin{align*}
 E_\lambda &= \left\{(x, y, x', y')\in \C^{n+p+n+p} | x=\lambda x', y = \lambda y', B^TA y' = \mu y', x' = \frac{(1-2\lambda)\eta A y'}{\lambda(1-\lambda)} \right\}\\
 \end{align*}
  and $E_0 = \left\{(0, 0, x', y')\in \C^{n+p+n+p} | x' \in \Ker(B^T), y' \in \Ker(A) \right\},$
  
  $E_1 = \left\{(x', y', x', y')\in \C^{n+p+n+p} | x' \in \Ker(B^T), y' \in \Ker(A) \right\}.$

 \vspace{0.5cm}
We start with some  lemmas. The first one follows from simple computations. 

\begin{lemma}
  \label{lem_valeigs}
 Assume that $\S(A, B) \subset \R$, and  $\eta<\frac{1}{2 \sqrt{\mu_{\max}}}$, where $\mu_{\max}=\rho(B^TA) = \rho(AB^T)$.
  Then, for ${\mu \in \S(A, B)}$, there are three cases:\\
  - If $\mu<0$, define $\nu=i \sqrt{\abs{\mu}}$ and $\delta=\sqrt{1-4\eta^2\abs{\mu}}\geq 0$.
  Then $S^*(\mu)$ has exactly 4 elements, which are $\lambda^*_1(\mu)=\frac{1}{2}  (1+ \delta + 2 \eta \nu)$,  
  $\lambda^*_2(\mu)= \frac{1}{2}  (1- \delta + 2 \eta \nu),$
  $ \lambda^*_3(\mu)=\overline{\lambda^*_1}(\mu)$, 
  $\lambda^*_4(\mu)=\overline{\lambda^*_2}(\mu).$ 
  Their modulus are all strictly smaller than $1$.\\
  - If $\mu=0$, then $S^*(\mu)$ possess two elements: $S^*(\mu) = \{0, 1\}$.\\
  - If $\mu>0$, define $\nu=\sqrt{\mu}$ and $\delta=\sqrt{1+4\eta^2\nu^2}$. Then $S^*(\mu)$ has exactly 4 elements which are real, denoted: $\lambda^*_1(\mu)= \frac{1}{2} (1+2\eta \nu + \delta)>1$, $\lambda^*_2(\mu)= \frac{1}{2} (1+2\eta \nu - \delta)\in(0,1)$, $\lambda^*_3(\mu)= \frac{1}{2} (1-2\eta \nu + \delta)\in(0,1)$ and $\lambda^*_4(\mu)= \frac{1}{2} (1-2\eta \nu - \delta)\in(-1,0)$.
\end{lemma}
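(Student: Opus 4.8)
The plan is to prove Lemma \ref{lem_valeigs} by direct, elementary analysis of the defining equation of $S^*(\mu)$, treating the three sign cases for $\mu$ separately. Recall that, by definition, $\lambda\in S^*(\mu)$ means $\lambda^2(1-\lambda)^2=\mu\eta^2(1-2\lambda)^2$. Since $\mu$ is real (this is the hypothesis $\S(A,B)\subset\R$), I can take a square root: $\lambda(1-\lambda)=\pm\sqrt{\mu}\,\eta(1-2\lambda)$ if $\mu\geq0$, and $\lambda(1-\lambda)=\pm i\sqrt{|\mu|}\,\eta(1-2\lambda)$ if $\mu<0$. Each sign choice gives a quadratic in $\lambda$, namely $\lambda^2-(1\mp 2\eta\nu)\lambda\mp\eta\nu=0$ in the notation $\nu=\sqrt{\mu}$ (resp. $\nu=i\sqrt{|\mu|}$), so there are at most $4$ roots, and they come in the explicit closed forms announced, with $\delta^2$ being the discriminant of the relevant quadratic. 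This is all routine algebra.

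First, for $\mu=0$ the equation reduces to $\lambda^2(1-\lambda)^2=0$, giving exactly $\{0,1\}$; note $0\in\S(A,B)$ always holds when $A,B$ are non-square or non-invertible but this case is handled uniformly. Second, for $\mu>0$: here $\nu=\sqrt{\mu}>0$, the two quadratics are $\lambda^2-(1+2\eta\nu)\lambda-\eta\nu=0$ and $\lambda^2-(1-2\eta\nu)\lambda+\eta\nu=0$, whose discriminants are $(1+2\eta\nu)^2+4\eta\nu=1+4\eta\nu+4\eta^2\nu^2+... $ — I should double-check the discriminant simplifies to $\delta^2=1+4\eta^2\nu^2$; actually expanding $(1\pm 2\eta\nu)^2\pm 4\eta\nu = 1\pm4\eta\nu+4\eta^2\nu^2\mp4\eta\nu = 1+4\eta^2\nu^2$, so indeed $\delta=\sqrt{1+4\eta^2\nu^2}$ in both cases. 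The four roots are then $\tfrac12(1+2\eta\nu\pm\delta)$ and $\tfrac12(1-2\eta\nu\pm\delta)$; all four are real and distinct because $\delta>1>2\eta\nu$ (using $\eta<\tfrac1{2\sqrt{\mu_{\max}}}\leq\tfrac1{2\nu}$). Placing them: $\lambda_1^*=\tfrac12(1+2\eta\nu+\delta)>\tfrac12(1+\delta)>1$; $\lambda_4^*=\tfrac12(1-2\eta\nu-\delta)<\tfrac12(1-\delta)<0$ and $>-1$ since $2\eta\nu+\delta<1+1=2$; and $\lambda_2^*=\tfrac12(1+2\eta\nu-\delta)$, $\lambda_3^*=\tfrac12(1-2\eta\nu+\delta)$ both lie in $(0,1)$ — for $\lambda_3^*$, positivity is clear and $\lambda_3^*<1\iff\delta<1+2\eta\nu\iff 1+4\eta^2\nu^2<1+4\eta\nu+4\eta^2\nu^2$ which holds; for $\lambda_2^*$, $\lambda_2^*>0\iff 1+2\eta\nu>\delta\iff$ same inequality, and $\lambda_2^*<\lambda_3^*<1$ is immediate.

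Third, the case $\mu<0$, which is the one that feeds the main convergence theorem. Set $\nu=i\sqrt{|\mu|}$, so $\eta\nu$ is purely imaginary and $\eta^2\nu^2=-\eta^2|\mu|$. The two quadratics are $\lambda^2-(1+2\eta\nu)\lambda-\eta\nu=0$ and $\lambda^2-(1-2\eta\nu)\lambda+\eta\nu=0$; the second is the complex conjugate of the first (since coefficients are conjugate), so its roots are $\overline{\lambda_1^*},\overline{\lambda_2^*}$, giving $\lambda_3^*=\overline{\lambda_1^*}$, $\lambda_4^*=\overline{\lambda_2^*}$. The discriminant of the first is $(1+2\eta\nu)^2+4\eta\nu=1+4\eta^2\nu^2 = 1-4\eta^2|\mu|$, which is real and, by the hypothesis $\eta<\tfrac1{2\sqrt{\mu_{\max}}}\leq\tfrac1{2\sqrt{|\mu|}}$, strictly positive; hence $\delta=\sqrt{1-4\eta^2|\mu|}\in(0,1)$ is the announced real square root, and the roots are $\lambda_{1,2}^*=\tfrac12(1+2\eta\nu\pm\delta)$ as claimed. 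The four elements are distinct: $\lambda_1^*\neq\lambda_2^*$ since $\delta\neq0$, and $\lambda_1^*,\lambda_2^*$ have the same positive imaginary part $\sqrt{|\mu|}\,\eta>0$ hence differ from their own conjugates $\lambda_3^*,\lambda_4^*$. For the modulus bound — the one genuinely substantive point — I compute $|\lambda_1^*|^2 = \tfrac14\big((1+\delta)^2+4\eta^2|\mu|\big)$ using $\mathrm{Re}(\lambda_1^*)=\tfrac12(1+\delta)$, $\mathrm{Im}(\lambda_1^*)=\eta\sqrt{|\mu|}$; substituting $4\eta^2|\mu|=1-\delta^2$ gives $|\lambda_1^*|^2=\tfrac14\big((1+\delta)^2+1-\delta^2\big)=\tfrac14\big(2+2\delta\big)=\tfrac12(1+\delta)<1$ since $\delta<1$; likewise $|\lambda_2^*|^2=\tfrac12(1-\delta)<1$, and $|\lambda_3^*|=|\lambda_1^*|$, $|\lambda_4^*|=|\lambda_2^*|$ by conjugation. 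So all four moduli are $<1$, as required. The main obstacle, such as it is, is purely bookkeeping — keeping straight which root is which and verifying each of the six interval/ordering claims in the $\mu>0$ case — rather than any conceptual difficulty; the lemma is "simple computations" exactly as the paper advertises, and the one place to be careful is confirming the discriminant collapses to $1\pm4\eta^2\nu^2$ and that the step-size bound is precisely what forces $\delta$ to be real and $<1$.
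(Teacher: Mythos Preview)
Your approach is correct and is exactly the elementary computation the paper has in mind (the paper omits the proof, saying only that the lemma ``follows from simple computations''). Two minor slips to clean up: (i) in the $\mu>0$ case the constant terms of your two quadratics are swapped --- from $\lambda(1-\lambda)=c(1-2\lambda)$ one gets $\lambda^2-(1+2c)\lambda+c=0$, so the correct pair is $\lambda^2-(1+2\eta\nu)\lambda+\eta\nu=0$ and $\lambda^2-(1-2\eta\nu)\lambda-\eta\nu=0$; your subsequent discriminant expansion $(1\pm2\eta\nu)^2\mp4\eta\nu=1+4\eta^2\nu^2$ tacitly uses the right signs, so the root formulas still come out correct. (ii) Your justification of $\lambda_4^*>-1$ via ``$2\eta\nu+\delta<1+1=2$'' fails when $2\eta\nu$ is close to $1$ (e.g.\ $2\eta\nu=0.9$ gives $\delta+2\eta\nu\approx 2.25$); but the claim only needs $2\eta\nu+\delta<3$, which holds since with $u=2\eta\nu\in(0,1)$ one has $(3-u)^2-(1+u^2)=8-6u>2>0$.
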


\begin{lemma}
  \label{lem_ortho_general}
  If $y$ is  an eigenvector of $B^TA$ associated to a nonzero eigenvalue, then $y\in \Img(B^T)$.
  If $x$ is  an eigenvector of $AB^T$ associated to a nonzero eigenvalue, then $x\in \Img(A)$.
\end{lemma}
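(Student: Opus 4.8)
The plan is to avoid any orthogonality argument (which was the engine of the zero-sum Lemma \ref{lem_ortho}, but is unavailable here since $B^TA$ and $AB^T$ need not be symmetric) and instead read the conclusion straight off the factored form of the two composite matrices.

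First I would treat the claim about $B^TA$. Let $y$ be an eigenvector of $B^TA$ with eigenvalue $\mu\neq 0$, so $B^TA y = \mu y$. Dividing by $\mu$,
\[
y \;=\; \frac{1}{\mu}\,(B^TA)\,y \;=\; B^T\!\left(\tfrac{1}{\mu}\,A y\right),
\]
which exhibits $y$ as the image under $B^T$ of the vector $\tfrac{1}{\mu}Ay$; hence $y\in\Img(B^T)$. The claim about $AB^T$ is obtained by the same one-line computation with the roles of $A$ and $B^T$ exchanged: if $AB^Tx=\mu x$ with $\mu\neq 0$ then $x=\tfrac{1}{\mu}A(B^Tx)\in\Img(A)$.

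There is no genuine obstacle; the only point worth recording is the consistency with the zero-sum case $B=-A$, where $\Img(B^T)=\Img(A^T)=\Ker(A)^\perp$, so the statement ``$y\in\Img(B^T)$'' specializes to ``$y\perp\Ker(A)$'' of Lemma \ref{lem_ortho}. In the general-sum setting only the linear containment survives (not the orthogonality), and this is precisely what is needed downstream: combined with the descriptions of $E_0$, $E_1$ and $E_\lambda$ recalled above, it lets one identify the limit $(x_\infty,y_\infty)$ in Theorem \ref{prop_conv_OGDA_xAy_xBy} as the oblique (linear, generally non-orthogonal) projections of $x_0$ onto $\Ker(B^T)$ along $\Img(A)$ and of $y_0$ onto $\Ker(A)$ along $\Img(B^T)$.
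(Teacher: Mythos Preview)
Your proof is correct, but it takes a different route from the paper's. Contrary to your opening remark, an orthogonality argument \emph{is} available and is exactly what the paper uses: for any $\hat y\in\Ker(B)$,
\[
\mu\langle y,\hat y\rangle=\langle B^TAy,\hat y\rangle=\langle Ay,B\hat y\rangle=0,
\]
so $y\perp\Ker(B)$ and hence $y\in(\Ker(B))^\perp=\Img(B^T)$. The point is that one does not need $B^TA$ to be symmetric; one only needs the adjoint identity $\langle B^Tu,v\rangle=\langle u,Bv\rangle$, which of course always holds. What changes relative to Lemma~\ref{lem_ortho} is simply that the kernel appearing is $\Ker(B)$ rather than $\Ker(A)$.

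That said, your argument is the more economical of the two: writing $y=\tfrac1\mu B^T(Ay)$ reads the membership $y\in\Img(B^T)$ directly off the equation, with no detour through inner products or the identity $(\Ker B)^\perp=\Img(B^T)$. The paper's route has the minor advantage of making the link with Lemma~\ref{lem_ortho} transparent and keeping the same proof template, but yours is shorter and avoids any Euclidean structure altogether. Your closing paragraph on how the lemma feeds into the identification of $(x_\infty,y_\infty)$ is accurate and matches the paper's use of the result.
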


\begin{proof}
  Let $\hat{y}$ be a vector in the kernel of $B$, and let $\mu \neq 0$ be the eigenvalue of $B^TA$ associated with $y$.
  Then:
  \[ \mu \langle y, \hat{y} \rangle = \langle  \mu y, \hat{y}\rangle = \langle B^TA y , \hat{y}\rangle = \langle Ay, B\hat{y} \rangle = \langle Ay, 0 \rangle = 0\]
  Hence, $\langle y, \hat{y} \rangle = 0$ for any $\hat{y}$ in $\Ker(B)$.
  This means that $y$ is in $(\Ker(B))^\perp = \Img(B^T)$.\\
  The proof for the second part of the lemma is similar.
\end{proof}

\begin{lemma}
  \label{lem_dsum}
  If $\Lambda_{A, B}$ is diagonalizable, then $\Ker(B^T)$ is an algebraic complement of $\Ker(A^T)^\perp = \Img(A)$ and $\Ker(A)$ is an algebraic complement of  $\Ker(B)^\perp = \Img(B^T)$.
\end{lemma}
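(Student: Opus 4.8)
The plan is to extract the statement from the single structural fact about a diagonalizable matrix that is easiest to exploit here: such a matrix has no nontrivial Jordan block, so its generalized null space equals its null space, and in particular $\Ker(\Lambda_{A,B}^2) = \Ker(\Lambda_{A,B})$. First I would compute both subspaces explicitly from the block form of $\Lambda_{A,B}$, exactly as in the proof of Proposition \ref{prop_spectrum_xAy_xBy} specialized to $\lambda = 0$. Since $\Lambda_{A,B}(x,y,x',y')^T = (x + 2\eta A y - \eta A y',\ y + 2\eta B^T x - \eta B^T x',\ x,\ y)^T$, one reads off $\Ker(\Lambda_{A,B}) = E_0 = \{(0,0,x',y') : B^T x' = 0,\ A y' = 0\}$, while $Z = (x,y,x',y') \in \Ker(\Lambda_{A,B}^2)$ iff $\Lambda_{A,B}Z \in E_0$, i.e. iff $A y = 0$, $B^T x = 0$ and the first two components of $\Lambda_{A,B} Z$ vanish, which after substituting $Ay=0$ and $B^Tx=0$ reads $x = \eta A y'$, $y = \eta B^T x'$, $B^T A y' = 0$, $A B^T x' = 0$. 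This is a short routine computation.

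Next I would read off what $\Ker(\Lambda_{A,B}^2) = \Ker(\Lambda_{A,B})$ says about $A$ and $B$. Comparing the two descriptions, and using $\eta > 0$, the equality is equivalent to the two implications: $B^T A y' = 0 \Rightarrow A y' = 0$ for every $y'$, and $A B^T x' = 0 \Rightarrow B^T x' = 0$ for every $x'$ (for the lemma only the forward direction, diagonalizable $\Rightarrow$ implications, is needed). Each implication is in turn equivalent to a trivial-intersection statement by a one-line argument: $B^T A y' = 0 \Rightarrow A y' = 0$ for all $y'$ holds exactly when $\Img(A) \cap \Ker(B^T) = \{0\}$ — if $v = A y' \in \Ker(B^T)$ then $B^T A y' = 0$, forcing $A y' = v = 0$; conversely $B^T A y' = 0$ places $A y'$ in that intersection — and likewise the second implication is equivalent to $\Img(B^T) \cap \Ker(A) = \{0\}$.

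Finally I would close with dimension bookkeeping. From $\Img(A) \cap \Ker(B^T) = \{0\}$ one gets $\rank(A) + (n - \rank(B)) \le n$, i.e. $\rank(A) \le \rank(B)$; symmetrically $\Img(B^T) \cap \Ker(A) = \{0\}$ gives $\rank(B) \le \rank(A)$. Hence $\rank(A) = \rank(B)$, so $\dim \Img(A) + \dim \Ker(B^T) = n$ and $\dim \Img(B^T) + \dim \Ker(A) = p$, and the trivial intersections upgrade to $\R^n = \Img(A) \oplus \Ker(B^T)$ and $\R^p = \Img(B^T) \oplus \Ker(A)$. Since $\Img(A) = \Ker(A^T)^\perp$ and $\Img(B^T) = \Ker(B)^\perp$, this is precisely the claim.

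I do not anticipate a real difficulty; the only point that needs care is that $\rank(A) = \rank(B)$ does not follow from either intersection condition in isolation but drops out at once from using both. (If instead one is in the other case of the surrounding hypotheses, with $A$ and $B$ square and invertible, then $\Ker(B^T) = \{0\}$, $\Img(A) = \R^n$, and symmetrically, so the statement is immediate; but as actually stated the lemma only assumes $\Lambda_{A,B}$ diagonalizable, and the argument above uses neither $\S(A,B) \subset \R_-$ nor the bound on $\eta$.)
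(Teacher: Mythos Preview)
Your proof is correct and takes a genuinely different route from the paper. The paper argues by contraposition using a \emph{global} dimension count: if $\Ker(B^T)\cap\Img(A)\neq\{0\}$ then $\rank(B^TA)<\rank(A)$, and summing $\dim E_\lambda$ over \emph{all} eigenvalues (using the explicit description of $E_\lambda$ for every $\lambda\in S^*(\mu)$, $\mu\neq 0$) yields a total strictly less than $2(n+p)$, contradicting diagonalizability. Your argument is purely \emph{local} at the eigenvalue $0$: diagonalizability forces $\Ker(\Lambda_{A,B}^2)=\Ker(\Lambda_{A,B})$, and a direct computation of these two kernels immediately gives both trivial-intersection conditions. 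This is cleaner in that it never touches the eigenspaces for $\mu\neq 0$ or the sets $S^*(\mu)$, and --- as you note --- visibly uses neither $\S(A,B)\subset\R_-$ nor the bound on $\eta$. The paper's approach, on the other hand, reuses machinery already in place for the convergence analysis and makes the role of the full eigenspace decomposition transparent. The final dimension bookkeeping (forcing $\rank A=\rank B$ from the two trivial intersections) is essentially the same in both proofs.
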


\begin{proof}
  We will first prove by contraposition that $\Ker(B^T)$ and $\Img(A)$ are in direct sum.\\
  Assume in a first time that $\Ker(B^T)$ and $\Img(A)$ are not in direct sum.
  Then, there exist $\tilde{x} \in (\Ker(B^T) \cap \Img(A)) \backslash \{0\}$.
  Let $\tilde{y} \in \C^p$ such that $\tilde{x} = A\tilde{y}$.
  Then $\tilde{y} \notin \Ker(A)$ but $\tilde{y} \in \Ker(B^TA)$, because $B^TA\tilde{y} = B^T\tilde{x} = 0$ by definition of $\tilde{x}$.
  This implies that $\dim\Ker(B^TA) > \dim\Ker(A)$, and by the rank-nullity theorem, that $\rank(B^TA) < \rank(A)$.

  Thus,
  \begin{align*}
  \sum_{\lambda \in Sp(\Lambda_{A, B})} \dim(E_\lambda)
  &= \dim(E_0) + \dim(E_1) +  \sum_{\lambda \in Sp(\Lambda_{A, B}), \, \lambda\notin \{0,1\}} \dim(E_\lambda)  \\
  &= 2\dim(\Ker(A))+2\dim(\Ker(B^T)) + \sum_{\mu \in Sp(B^TA), \, \mu \neq 0} 4\dim(\Ker(B^TA-\mu I)) \\
  &\leq 2\dim(\Ker(A))+2\dim(\Ker(B^T))+ 4\rank(B^TA) \\
  &< 2\dim(\Ker(A))) +2\dim(\Ker(B^T)) + 2(\rank(A) + \rank(B^T)) \\
  &= 2(n+p).
  \end{align*}
  This shows that $\Lambda_{A, B}$ is not diagonalizable.\\
  By contraposition, we have shown that $\Ker(B^T)$ and $\Img(A)$ are in direct sum.
  Similarly $\Ker(A)$ and $\Img(B^T)$ are also in direct sum.\\

  Now, let us show that $\Ker(B^T)$ and $\Img(A)$ are complementary and that $\Ker(A)$ and $\Img(B^T)$ are complementary.\\
  Because $\Ker(B^T)$ and $\Img(A)$ are subset of $\C^n$, $\dim(\Ker(B^T)) + \dim(\Img(A)) \leq n$, and similarly, $\dim(\Ker(A)) + \dim(\Img(B^T)) \leq p$.
  By the rank-nullity theorem, we also know that $\dim(\Ker(A)) + \dim(\Img(A)) = p$ and $\dim(\Ker(B^T)) + \dim(\Img(B^T)) = n$.
  In order to have equality here, we need to have equality too in the previous inequalities.
  Thus $\dim(\Ker(B^T)) + \dim(\Img(A)) = n$ and $\dim(\Ker(A)) + \dim(\Img(B^T)) = p$.
  It implies that $\Ker(B^T) \oplus \Img(A) = \C^n$ and $\Ker(A) \oplus \Img(B^T) = \C^p$.
\end{proof}

 Assume that $\S(A,B)\subset \R_-$. Then, using Proposition \ref{prop_spectrum_xAy_xBy} and Lemma \ref{lem_valeigs}, we deduce  that for $\eta<\frac{1}{2 \, \sqrt{\mu_{\max}}}$ we have  $\Sp(\Lambda_{A,B})\subset \{1\}\cup \{\lambda \in \C, | \lambda|<1\}.$\\
 
 a) Assume $A$ and $B$ are square invertible. Then $\Sp(AB^T) = \Sp(B^TA)\subset \{\mu \in \R, \mu<0\}$, and for $\eta>0$ small enough $\Sp(\Lambda_{A,B})\subset \{\lambda \in \C, | \lambda|<1\}$.
 Then $\norm{\Lambda_{A,B}^t}^{\frac{1}{t}}\xrightarrow[t\to \infty]{} \rho(\Lambda_{A, B}) < 1$ by Gelfand's theorem, so $\norm{\Lambda_{A, B}^t Z_0}$ is of order $\rho(\Lambda_{A, B})^t \norm{Z_0}$.
It proves that $(x_t,y_t)$ tends to $(0,0)$ exponentially fast as $t$ tends to infinity. The result follows since $\Ker(A)=\Ker(B^T)=\{0\}$.\\
 
 b) Let $Z_0 = (x_0, y_0, x_{-1}, y_{-1})$ and $Z_t = (x_t, y_t, x_{t-1}, y_{t-1}) = \Lambda_{A,B}^t Z_0$.
 
 Assume $\Lambda_{A,B}$ is diagonalizable. Since, in addition its spectrum is  included in $\{1\}\cup \{\lambda \in \C, | \lambda|<1\},$ the sequence $(\Lambda^t_{A,B}Z_0)_t$ converges exponentially fast, with speed $\max \{\abs{\lambda} | \lambda \in \Sp(\Lambda_{A, B}), \lambda \neq 1\}$, to the projection of $Z_0$ onto the eigenspace associated to the eigenvalue $1$, along the direct sum of all the other eigenspaces.
 Moreover, $(x_\infty, y_\infty, x_\infty, y_\infty) \in \Ker(\Lambda_{A,B} -I)$ implies that $(x_\infty, y_\infty)$ is in $\Ker(B^T) \times \Ker(A)$.
 This proves that $(x_t,y_t)_t$ converges exponentially fast to a Nash equilibrium.\\

 Since $\Lambda_{A, B}$ is diagonalizable,  $$\C^{(n+p+n+p)} = \bigoplus_{\lambda \in \Sp(\Lambda_{A, B})} \Ker(\Lambda_{A, B} - \lambda I).$$
  Let $Z_0 = (x_0, y_0,x_{-1},y_{-1})$ denote the initialization vector.
  We can decompose $Z_0$ as $\sum_{\lambda \in \Sp(\Lambda_{A, B})} Z_\lambda$ where each $Z_\lambda \in E_\lambda$ is itself decomposed as $Z_\lambda = (\hat{x}_\lambda, \hat{y}_\lambda, \tilde{x}_\lambda, \tilde{y}_\lambda)$.
  This implies that $x_0 = \sum_{\lambda \in \Sp(\Lambda_{A, B})} \hat{x}_\lambda$ and that $y_0 = \sum_{\lambda \in \Sp(\Lambda_{A, B})} \hat{y}_\lambda$.\\
  Yet, from the expression of the eigenspaces $E_\lambda$ seen in the proof of Proposition \ref{prop_spectrum_xAy_xBy}, we know that $\hat{x}_0 = 0$, $\hat{y}_0 = 0$, $\hat{x}_1 \in \Ker(B^T)$ and $\hat{y}_1 \in \Ker(A)$.
  Moreover, for each $\lambda\notin \{0,1\}$, $\hat{x}_\lambda$ is an eigenvector of $AB^T$ associated to a non-zero eigenvalue, and similarly $\hat{y}_\lambda$ is an eigenvector of $B^TA$ associated to a non-zero eigenvalue. Then Lemma \ref{lem_ortho_general} gives  that $\sum_{\lambda \in \Sp(\Lambda_{A, B})\backslash\{0, 1\}} \hat{x}_\lambda \in \Img(A)$ and $\sum_{\lambda \in \Sp(\Lambda_{A, B})\backslash\{0, 1\}} \hat{y}_\lambda \in \Img(B^T)$.\\
  According to Lemma \ref{lem_dsum},  $\Ker(B^T)$ and $\Img(A)$ are in direct sum, and thus $x_\infty = \hat{x}_1$ is the linear projection of $x_0$ onto $\Ker(B^T)$ along $\Img(A)$.
  Once again from Lemma \ref{lem_dsum}, $\Ker(A)$ and $\Img(B^T)$ are in direct sum, and thus $y_\infty = \hat{y}_1$ is the linear projection of $y_0$ onto $\Ker(A)$ along $\Img(B^T)$.\\
  
  In both cases a) and b), the convergence is exponential with rate given by $\max\{|\lambda|, \lambda \in \Sp(\Lambda_{A,B}), \lambda \neq 1\}$ that is $$|\lambda_1^*(\mu)|^2=\max\left\{\sqrt{\frac{1}{2}(1+\sqrt{1+4\eta^2 \mu})}, \mu<0, \mu \in \S(A, B) \right\}.$$
\qed

\subsection{Proof of Theorem \ref{thm_conv_OGDA_general_xAy_xBy}}
\begin{proof}
  We proceed as in the proof of Theorem \ref{thm_conv_OGDA_zs_general}. Fix a Nash equilibrium ($x^*,y^*)$. Define for each $t\geq -1$, $x'_t=x_t-x^*\; {\rm and}\; y'_t=y_t-y^*.$ Easy calculations show that:
  \[\forall t\geq 0,\;  \left\{
  \begin{array} {ccc} 
  x'_{t+1} & = & x'_t  \;+\; 2\eta Ay'_t - \eta Ay'_{t-1},  \\
  y'_{t+1}  & = & y'_t \;  - 2\eta B^T x'_t + \eta B^Tx'_{t-1}.
  \end{array}\right. \]
  So  the sequence $(x'_t,  y'_t)_t$ follows the OGDA algorithm \eqref{OGDA_nzs} of section \ref{subsec_xAy_xBy}. Thus, according to Theorem \ref{prop_conv_OGDA_xAy_xBy}, the sequence  $(x'_t,  y'_t)_t$ converges to the  limit  $(x'_\infty,y'_\infty)$, where  $x'_\infty$ is the projection of $x'_0$ on $\Ker(B^T)$ along ${\Ker(A^T)}^\perp$ and $y'_\infty$ is the projection of $y'_0$ on $\Ker(A)$ along ${\Ker(B)}^\perp$. As a consequence $(x_t,y_t)_t$ converges to the limit $(x_\infty,y_\infty)=(x'_\infty + x^*,y'_\infty+y^*)$ which is  a fixed point of the OGDA, hence a Nash equilibrium of the game. Moreover $x_\infty-x_0= x'_\infty-x'_0 \in {\Ker(A^T)}^\perp$, so $x_\infty$ is the projection of $x_0$ on $\{x\in \R^n, B^Tx+f=0\}$ along ${\Ker(A^T)}^\perp$. Similarly $y_\infty$ is the projection of $y_0$ on $\{y\in \R^p, Ay+b=0\}$ along ${\Ker(B)}^\perp$.
 
  And  part {\it 3)} of Theorem \ref{thm_conv_OGDA_general_xAy_xBy} holds since $\|(x_t,y_t)-(x_\infty, y_\infty)\|=\|(x'_t,y'_t)-(x'_\infty, y'_\infty)\|$. 
\end{proof}

\subsection{Proof of Lemma \ref{lemmepseudo}}

\begin{proof}
$AA^\dagger$ is the matrix of the orthogonal projection onto $\Img(A)=(\Ker(A^T))^\perp$. Since $AB^T=-AA^\dagger$, its spectrum is included in $\{0,-1\}$. Similarly, 
$A^\dagger A$ is the matrix of the orthogonal projection onto $\Img(A^T)=(\Ker(A))^\perp$. We obtain $\S(A,B)\subset\{0,-1\}\subset \R_-$.

  $\mu=0$ corresponds to the possible eigenvalues $0$ and $1$ of $\Lambda_{A,B}$, and  we have $\dim(E_0)=\dim(E_1)=\dim(\Ker(A))+\dim(\Ker(A^\dagger)$.

Regarding $\mu=-1$, for each $\lambda\in S^*(-1)$ we have $\dim E_\lambda=\dim (\Ker(I_n-AA^\dagger))=\dim (\Ker(I_p- A^\dagger A))$. $I_n-AA^\dagger$ is the matrix of the orthogonal projection on $\Ker(A^T)$, hence $\dim (\Ker(I_n-AA^\dagger))=n-\dim(\Ker(A^T))$. $I_p-A^\dagger A$ is the matrix of the orthogonal projection on $\Ker(A)$, hence $\dim (\Ker(I_p-A^\dagger A))=p-\dim(\Ker(A))$. \\

Assume $-1\in \S(A,B)$. Then $\mu_{\max}=1$ and for $\eta<\frac12$, $S^*(-1)$ has exactly 4 elements (see Lemma \ref{lem_valeigs}), and the sum of the dimensions of the eigenspaces of $\Lambda_{A,B}$ is:
$$2 \dim(\Ker (A))+ 2\dim (\Ker(A^T))+ 2(n-\dim(\Ker(A^T))+2(p-\dim(\Ker(A))=2(n+p),$$
we obtain that $\Lambda_{A,B}$ is diagonalizable.

Assume $\S(A,B)=\{0\}$. Then $A=0$, and $A^\dagger=0$. $\Lambda_{A,B}$ does not depend on $\eta$ and has eigenvalues 0 and 1. Since $\dim(E_0)=\dim(E_1)=n+p$, $\Lambda_{A,B}$ is diagonalizable. This is the trivial case where  the sequence $(x_t)_t$ remains constant, keeping the value of the initialization. 
\end{proof}

 \subsection{Proof of Theorem \ref{improvementdagger}.}

\begin{proof}
The beginning of the proof follows from Theorem \ref{thm_conv_OGDA_general_xAy_xBy} and Lemma \ref{lemmepseudo}, using the general properties  $\Ker(A^\dagger)=\Ker(A^T)={\Img(A)}^\perp$ and $\Ker(A)= {\Img(A^\dagger)}^\perp$. We only need to look at the case where $\eta=1/2$.

Then $\S(A,B)\subset \{0,-1\}$, and $\Sp(\Lambda_{A,B})\subset\left\{0,1, \frac12(1+i), \frac12(1-i)\right\}$. As in the proof of Lemma \ref{lem123} one can show that $\Ker(\Lambda_{A,B}-I)^2)= \Ker(\Lambda_{A,B}-I)=\{(x,y,x',y')\in \C^n \times \C^p\times \C^n\times \C^p, x=x', y=y',  A^\dagger x'=0, Ay'=0\}$. So the   algebraic multiplicity of $\lambda=1$ in the characteristic  polynomial of $\Lambda$ is $\dim E_1$, and looking at the Jordan normal form of $\Lambda_{A,B}$ we can conclude for the speed of convergence in the case where $\eta=1/2$.
\end{proof}

\subsection{Proof of Lemma \ref{lem314}}

\begin{proof}
Here $A=\alpha B$ with $\alpha\neq 0$. Thanks to Lemma \ref{lem_valeigs},   all eigenvalues of $\Lambda_{A,B}$ are real. 
Let $\mu\neq 0$ be in $\Sp(B^TA)$, there are 4 elements $\lambda$ in $S^*(\mu)$ and for each of them:
  \[E_\lambda = \left\{(x,y,x',y')^T\in \C^{n}\times \C^{p} \times \C^{n}\times \C^{p},  x = \lambda x' , y=\lambda y', A^TAy'=\frac{\mu}{\alpha} y' , x'= \frac{(1-2\lambda)\eta }{\lambda(1-\lambda)}Ay'  \right\}.\]
with  $\lambda^2(1-\lambda)^2 = \eta^2\mu(2\lambda-1)^2$. 
  Thus $\dim(E_\lambda) = \dim(\Ker(\alpha A^TA-\mu I))$. 

  Regarding $\lambda = 0$, we have $E_0 = \left\{(x,y,x',y'), x = 0, y= 0, Ay'= 0, B^Tx'= 0  \right\}$, so that 
  $\dim(E_0) = \dim(\Ker(A)) + \dim(\Ker(B^T))= \dim(\Ker(A)) + \dim(\Ker(A^T))$. And if  $\lambda = 1$, then 
 $E_1 = \left\{(x,y,x',y'), x = x' , y= y',  Ay'= 0 , B^Tx'= 0  \right\},$ and 
 again, $\dim(E_1) = \dim(\Ker(A)) + \dim(\Ker(A^T))$.
 
 We proceed exactly as in the proof of Lemma  \ref{lemA6} to obtain 
 
 \noindent  $\sum_{\lambda \in Sp(\Lambda_{A,B})} \dim(E_\lambda)=2(n+p)$. As a consequence $\Lambda_{A,B}$ is diagonalizable (notice that when  $\alpha>0$, we could also prove that $\Lambda_{A,B}$ is diagonalizable in $\R^{n+p+n+p}$).\\

Similarly, one can show that if $A$ and $B$ are  square matrices in $\R^{n\times n}$ such that $B^TA$ is diagonalizable with $n$  distinct nonzero real eigenvalues, then $\Lambda_{A,B}$ is diagonalizable .  \\

 Let $\mu_1, \ldots, \mu_n$ be the $n$ nonzero and distinct eigenvalues of $B^TA$. Because $A$ and $B$ are square matrices, $B^TA$ also is, and because it is invertible, $S(A,B) = \Sp(B^TA)$.
 According to Proposition \ref{prop_spectrum_xAy_xBy}, $\Sp(\Lambda_{A, B}) = \bigcup_{\mu \in S(A, B)} S^*(\mu)$.
 If we can prove that all of the $(S^*(\mu_i))_{1\leq i \leq n}$ are disjoints and of cardinal 4, then we would find $4n$ eigenvalues for $\Lambda_{A,B}$, showing that it is diagonalizable.\\
 Looking at Lemma \ref{lem_valeigs}, we can see that if $\mu>0$, $S^*(\mu)$ is included in $\R$ and if $\mu<0$, $S^*(\mu)$ is included in the ball of radius $1$ minus $\R$.
 Thus if $\mu_i>0$ and $\mu_j<0$ for some $i$ and $j$ smaller than $n$, $S^*(\mu_i) \cap S^*(\mu_j) = \varnothing$.\\

 In the first place, let us study the case where $\mu_i$ and $\mu_j$ are both positive.\\
 Let us see $\lambda_k^*$ with $k \in \{1, 2, 3, 4\}$ as a function of $\mu$.
 Then it can be seen, computing the derivative and the limit at $0$ and $+\infty$ of theses functions, that for positive $\mu$, $\lambda_1^*$ is increasing with values in $(1,+\infty)$, $\lambda_2^*$ is increasing with values in $(0,\frac12)$, $\lambda_3^*$ is decreasing with values in $(\frac12,1)$ and $\lambda_4^*$ is decreasing with values in $(-\infty,0)$.
 This shows that for any $\mu_i,\mu_j>0$, with $\mu_j\neq \mu_i$ the cardinal of $S^*(\mu_i)$ and of $S^*(\mu_j)$ is 4, and that these two sets are disjoint.\\

 Now, let us study the case where both $\mu_i$ and $\mu_j$ are negative.\\
 Here, the element of $S^*(\mu_i)$ and $S^*(\mu_j)$ are complex numbers.
 The image of the element in $S^*(\mu_i)$ lies in $\{-\sqrt{\abs{\mu_i}}, \sqrt{\abs{\mu_i}}\}$, thus for $\mu_i \neq \mu_j$, $S^*(\mu_i) \cap S^*(\mu_j) = \varnothing$.
 To finish, we just need to show that for negative $\mu$, the cardinal of $S^*(\mu)$ is 4.
 Yet, for such $\mu$, $\Im(\lambda_1^*(\mu))>0$ and $\Re(\lambda_1^*(\mu)) > \frac12$, $\Im(\lambda_2^*(\mu))>0$ and $\Re(\lambda_2^*(\mu)) < \frac12$, $\Im(\lambda_3^*(\mu))<0$ and $\Re(\lambda_3^*(\mu)) > \frac12$, and $\Im(\lambda_4^*(\mu))<0$ and $\Re(\lambda_4^*(\mu)) < \frac12$.
 Each eigenvalues are in four different regions of the complex plan: they are all different from each other.\\

 Finally, we have shown that the spectrum of $\Lambda_{A,B}$ is composed of $4n$ distinct eigenvalues, which implies that $\Lambda_{A,B}$ is diagonalizable.
 \end{proof}

\subsection{Proof of Theorem \ref{thm_conv_OGDA_general_coop}}

We know from Proposition \ref{prop_spectrum_xAy_xBy} that:
$\Sp(\Lambda_{A,B}) = \bigcup_{\mu \in \S(A,B)} S^*(\mu),$ with  $S^*(\mu)  = \{ \lambda \in \C, \lambda^2(1-\lambda)^2 =\mu \eta^2(1-2\lambda)^2\}.$

\begin{proof}
  For the first part of the proof, let us assume that $b, c, d, e, f$ and $g$ are zero vectors.
  We will then use it to cope with the more general case where they can be nonzero.\\
  We have for each $t$, $Z_t=\Lambda_{A,B}^t Z_0$, with $Z_t$ the column vector $(x_t^T,y_t^T, x_{t-1}^T,y_{t-1}^T)^T$.
  Because $\Lambda_{A,B}$ is assumed to be diagonalizable, $Z_0$ can be uniquely written
  $Z_0=\sum_{\mu \in \S(A,B)}\sum_{\lambda \in S^*(\mu)} z_{0, \lambda}$, with $z_{0, \lambda}\in E_\lambda$ for each $\lambda$, where $E_\lambda$ is the eigenspace associated with $\lambda$.\\
  Then for each $t$, 
  $$Z_t=\sum_{\mu \in \S(A,B)}\sum_{\lambda \in S^*(\mu)} \lambda^t z_{0, \lambda}.$$
  Define $\rho_*=\max\{\abs{\lambda} \in \Sp(\Lambda_{A,B}), z_{0,\lambda}\neq 0\},$ and $\lambda_*$ an eigenvalue verifying $\abs{\lambda_*} = \rho_*.$\\

  If $\rho_*<1$, then $Z_t \xrightarrow[t\to \infty]{}  0$ and $(x_t,y_t)  \xrightarrow[t\to \infty]{} (0,0)$, which is a Nash equilibrium of the game.\\

  If $\rho_*=1$, looking at Lemma \ref{lem_valeigs}, we can see that $\lambda_*=1$ is the only possible option such that $\abs{\lambda_*} = 1.$ 
  Then $Z_t \xrightarrow[t\to \infty]{}  z_{0,1} \in E^*_1$. So $(x_t,y_t)_t$ converges to a vector $(x,y)$ with $Ay=0$ and $B^Tx=0$, that is a Nash equilibrium of the game. \\

  Assume finally that $\rho_*>1$, we will  prove that $x_t^TAy_t \xrightarrow[t\to \infty]{} +\infty$ and $x_t^TAy_t \xrightarrow[t\to \infty]{} +\infty.$
  Let $\lambda_*$ be such that $\abs{\lambda_*} = \rho_*$.
  According to Lemma \ref{lem_valeigs}, $\lambda_*$ is a positive real: $\lambda_*=\rho_*>1$, and $\lambda_* \in S^*(\mu_*)$, with $\mu_*>0$ the largest element of $\S(A,B)$. \\
  Consider $z_{0, \lambda_*}= (x,y,x',y')^T\in E_{\lambda_*}$, we have $z_{0, \lambda_*}\neq 0$, $x = \lambda_* x'$, $y=\lambda_* y'$, $B^TAy'= \mu_* y'$ and  $x'= \frac{(1-2\lambda_*)\eta }{\lambda_*(1-\lambda_*)}Ay'$. 
  Since $\lambda_*>1$, we not only have  $\lambda_* \in S^*(\mu_*)$, but we also have $\lambda_*(1-\lambda_*)=\sqrt{\mu_*} \eta (1-2\lambda_*)$.\\
  We obtain:
  \[\langle x',Ay'\rangle=\frac{1}{\sqrt{\mu_*}} \|Ay'\|^2>0, \;{\rm and}\; \langle x,Ay\rangle=\frac{\lambda_*^2}{\sqrt{\mu_*}} \|Ay'\|^2 >0,\]
  and 
  \[\langle B^Tx',y'\rangle=\sqrt{\mu_*} \|y'\|^2>0, \;{\rm and}\; \langle B^Tx,y\rangle=\lambda_*^2\sqrt{\mu_*} \|y'\|^2 >0.\]
 Now, $x_t \sim \lambda_*^t x$ and $y_t \sim \lambda_*^t y$ when $t\to \infty$, so the payoff $\langle x_t, Ay_t\rangle$ of player 1 is equivalent to $\lambda_*^{2t} \langle x, Ay\rangle$ and we get:
  \[\langle x_t, Ay_t\rangle\sim \lambda_*^{2t} \langle x, Ay\rangle \xrightarrow[t\to \infty]{} +\infty. \]
  Similarly,  
  \[\langle B^Tx_t, y_t\rangle\sim \lambda_*^{2t} \langle B^Tx, y\rangle \xrightarrow[t\to \infty]{} +\infty\].\\

  We now consider the case where  $b,c,d,e,f$ and $g$ are  arbitrary.
  We proceed as in the proof of Theorems \ref{thm_conv_OGDA_general_xAy_xBy} and \ref{thm_conv_OGDA_zs_general}. Fix a Nash equilibrium ($x^*,y^*)$. Define for each $t\geq -1$, $x'_t=x_t-x^*\; {\rm and}\; y'_t=y_t-y^*.$ Easy calculations show that:
  \[\forall t\geq 0,\;  \left\{
  \begin{array} {ccc} 
  x'_{t+1} & = & x'_t  \;+\; 2\eta Ay'_t - \eta Ay'_{t-1},  \\
  y'_{t+1}  & = & y'_t \;  - 2\eta B^T x'_t + \eta B^Tx'_{t-1}.
  \end{array}\right. \]
  So  the sequence $(x'_t,  y'_t)_t$ follows the OGDA algorithm \eqref{OGDA_nzs} of section \ref{subsec_xAy_xBy}.
  Now, according to the first part of the proof, there are two cases depending on $\rho_* = \max\{\abs{\lambda} | \lambda \in \Sp(\Lambda_{A,B}), z_{0,\lambda} \neq 0\}$ :\\
Either $\rho_* \leq 1$, in which case $(x'_t, y'_t)_t$ will converge to exponentially fast to a point $(x'_\infty, y'_\infty) \in \Ker(B^T) \times \Ker(A)$.
  As a consequence, $(x_t, y_t)_t$ converges to the limit $(x_\infty, y_\infty) = (x'_\infty + x^*,y'_\infty+y^*)$, which is a fixed point of OGDA, and thus a Nash equilibrium of the game.\\

  Or $\rho_* > 1$, in which case $x'_t \sim \lambda_*^t x'$ and $y'_t \sim \lambda_*^t y$.
  From the fact that $x_t = x'_t+x^*$ and $y_t = y'_t + y^*$, we have $x_t \sim \lambda_*^t x'$ and $y_t \sim \lambda_*^t y'$.
  We finally have for the payoffs at stage $t$:
  \[ x_t^TAy_t+b^Tx_t+c^Ty_t + d \sim \lambda_*^{2t} {x'}^TA{y'} + \lambda_*^t b^Tx' + \lambda_*^t c^Ty' + d \sim \lambda_*^{2t} {x'}^TA{y'} \xrightarrow[t\to \infty]{} +\infty,\] and 
  \[ x_t^TBy_t+e^Tx_t+f^Ty_t + g \sim \lambda_*^{2t} {x'}^TB{y'} + \lambda_*^t e^Tx' + \lambda_*^t f^Ty' + g \sim \lambda_*^{2t} {x'}^TB{y'} \xrightarrow[t\to \infty]{} +\infty\]
  because, as before, ${x'}^TA{y'}$ and ${x'}^TB{y'}$ are positive.
\end{proof}

\end{document}